\newtheorem{maintheorem}{Theorem} 
\newtheorem{theorem}{Theorem}
\newtheorem{lemma}[theorem]{Lemma}
\newtheorem{coro}[theorem]{Corollary}
\newtheorem{prop}[theorem]{Proposition}
\newtheorem*{assumptions*}{Assumptions}
\newtheorem*{rem*}{Remark}
\theoremstyle{remark}
\newtheorem{remark}[theorem]{Remark}
\newtheorem*{remark*}{Remark}
\theoremstyle{definition}
\newtheorem{definition}{Definition}
\newcommand{\A}{{\mathbf A}}
\newcommand{\C}{{\mathbf C}}
\newcommand{\W}{{\mathbf W}}
\newcommand{\PP}{{\mathbb P}}
\newcommand{\RR}{{\mathbb R}}
\newcommand{\ZZ}{{\mathbb Z}}
\newcommand{\be}[1]{\begin{equation} \label{#1} }
\newcommand{\ee}{\end{equation}}
\newcommand{\beq}{\begin{equation}}
\def \Diff{{\rm Diff}}
\def \tal{{\tilde{\alpha}}}
\def \tM{{\tilde{M}}}
\def \al{{\alpha}}
\def \W{{\mathcal W}}
\def \O{{\mathcal{O}}}
\def \A{{\mathcal A}}
\def \dist{{\mathrm{dist}}}
\def \C{{\mathcal{C}}}
\def \cF{{\mathcal{F}}}
\def \cE{{\mathcal{E}}}
\def \cR{{\mathcal{R}}}
\def \cV{{\mathcal{V}}}
\def \tE{{\tilde E}}
\def \tcE{{\tilde {\mathcal E}}}
\def \mC{{\mathfrak C}}
\def \H{{\mathcal{H}}}
\def \bal{\bar\alpha}
\def \cU{\mathcal{U}}
\numberwithin{theorem}{section}
\numberwithin{equation}{section}
\author{Danijela Damjanovi\'c}
\address[Damjanovi\'c]{Department of mathematics, Kungliga Tekniska högskolan, Lindstedtsvägen 25, SE-100 44 Stockholm, Sweden.} 
\email{ddam@kth.se}
\thanks{The first aurthor is supported by the Swedish Research Council grant  2015-04644}
\author{Disheng Xu}
\address[Xu]{Department of mathematics, the University of Chicago, Chicago, IL, US, 60637}
\email{dishengxu@math.uchicago.edu}
\subjclass[2010]{Primary 37C15, 37C85, 37D20}  
\keywords{Anosov diffeomorphisms, cocycles, abelian actions, global rigidity, normal forms, holonomies, closing lemma, Zimmer amenable reduction.}
\begin{document}

\title[Classification of Abelian action]{On classification of higher rank Anosov actions on compact manifold}

\date{\today}
\maketitle
{\centering\footnotesize \emph{Dedicated to Anatole Katok}\par}
\begin{abstract} We prove global smooth classification results for Anosov $\mathbb Z^k$ actions on \emph{general} compact manifolds, under certain irreduciblity conditions and the presence of sufficiently many Anosov elements. In particular we remove all the \emph{uniform control} assumptions which were used in all the previous results towards Katok-Spatzier global rigidity conjecture on general manifolds. 
The  main idea is to create a new mechanism labelled \emph{non-uniform redefining argument}, to prove  continuity of certain dynamically-defined objects. This leads to uniform control for a higher rank action and should apply to more general rigidity problems in dynamical systems.
\end{abstract}
 
\tableofcontents
\section{Introduction}
Anosov diffeomorphisms are well-studied class of systems, which in many respects either have or are expected to have rigid dynamical features.  For example, it is well known result of Anosov  that complete \emph{topological} orbit structure of an Anosov diffeomorphism is preserved under $C^1$-small perturbations.  
Principal examples of Anosov diffeomorphisms are hyperbolic affine maps on nilmanifolds (in particular on tori), and manifolds finitely covered by nilmanifolds (infranilmanifolds). 
A long outstanding global \emph{topological} rigidity conjecture says that all Anosov diffeomorphisms are \emph{topologically} conjugate to affine maps on infranilmanifolds. 
This conjecture on nilmanifolds has been proved by Franks and Manning \cite{F}, \cite{M}, but only a few results are known for general manifolds, (cf. \cite{GH}, \cite{BM} and the references therein).

These topological local and global rigidity results for Anosov diffeomorphisms cannot be improved in general to \emph{smooth} rigidity results. It is easy to see already for algebraic Anosov maps on the torus that the \emph{differentiable} orbit  structure may be destroyed via small perturbations \cite[Section 2.1]{KH}\color{black}


 
It was a remarkable discovery made by Katok and Spatzier \cite{KatSpa}, that when certain \emph{commuting} algebraic Anosov maps generate a  $\ZZ^k$ group action, $k\ge 2$, then perturbations of this \emph{group action} indeed preserve full \emph{differentiable} (in fact smooth) orbit structure. This phenomenon is labelled \emph{local rigidity}. Actions which have such strong rigidity property of course must not reduce in any way to $\mathbb Z$-actions, i.e. single diffeomorphisms. For example if we pick two Anosov diffeomorphisms on two manifolds, this  naturally induces a $\ZZ^2$ Anosov action on the product manifold. However, this "product action" cannot enjoy any rigidity whenever single Anosov diffeomorphism does not. 
Therefore, some "irreducibility" condition on a higher-rank action is necessary. \color{black} 
All known ``irreducible" higher rank Anosov actions are $C^\infty-$conjugate to algebraic models, and "irreducibility" for algebraic actions is a well understood notion. 

All the strong rigidity properties (found in \cite{KatSpa, KalKat, KL, KL96}, etc.) of the irreducible algebraic models support the following global smooth classification conjecture for \emph{higher-rank Anosov actions}, i.e. $\ZZ^k$ actions ($k\geq 2$) containing one Anosov diffeomorphism, made by A. Katok and R. Spatzier:\\
\\
\textbf{Katok-Spatzier global rigidity conjecture:}
\emph{All ``irreducible" smooth Anosov $\ZZ^k$ actions for $k\ge 2$, on any compact smooth manifold, are $C^\infty$-conjugate to algebraic models on infranilmanifolds. }

Significant progress (\cite{FKS, FKS1, HW, KalSpa, KS06, KS07, H07, DX17}, etc.) has been made towards proving the Katok-Spatzier conjecture. In particular,  Rodriguez Hertz and Wang \cite{HW} proved the Katok-Spatzier conjecture under the assumption that the action is on a nilmanifold.  This result was preceded by the work of Fisher, Kalinin and Spatzier \cite{FKS} on nilmanifolds under the assumption that the action has sufficiently many Anosov elements.
The crucial starting point for these works on nilmanifolds actions is the Franks-Manning topological conjugacy (linearization) from the Anosov action to the algebraic model (\cite{F, M}), and in this context ``irreducibility" condition can be easily defined as the lack of rank-one factors for the linearization.

For higher rank Anosov actions on \textbf{general} smooth manifolds, the Katok-Spatzier conjecture is wide open and only a few partial results are known, (cf. \cite{KalSpa, KS06, KS07}). It is even not known how to formulate it explicitly. The reason is that it is hard to give a precise definition of ``irreducible", because on general manifolds \emph{a priori} there is no topological conjugacy from the action to an algebraic model. 

In all the existing results for Katok-Spatzier conjecture on general manifolds, the following three types of conditions are always assumed:
\begin{itemize}
\item \emph{Uniform control} for the action on certain dynamically-defined foliations. (For example uniform quasi-conformality , cf. \cite{KS06, KS07, DX17}, etc)\\
\item Certain irreducibility conditions.  
For example, \emph{totally non-symplectic} condition (TNS), cf. section \ref{sec: sett stat}. \\
\item The presence of sufficiently many Anosov elements, (for example \emph{totally Anosov} condition, cf. section \ref{sec: sett stat}), is assumed in almost all the existing results on the Katok-Spatzier conjecture (cf. \cite{FKS, FKS1, KalSpa, KS06, KS07, DX17}, etc). 
\\

\end{itemize}
The TNS condition was defined by Katok, Nitica and Torok \cite{KNT}  for the purpose of obtaining a more geometric approach to proving certain rigidity properties of higher-rank Anosov actions. It was initially used for algebraic actions, but it is of great importance in understanding ``irreducibility" for general non-algebraic actions on general manifolds. 

In this paper, we prove Katok-Spatzier conjecture on general manifolds under certain \color{black} irreducibility condition and the assumption of presence of many Anosov elements. In particular one consequence of our results is that\\
\\
\textbf{``Theorem 0".} \emph{ All the uniform control assumptions for the Anosov actions can be \textbf{removed} from all the previous results towards  the Katok-Spatzier conjecture on general manifolds.}

For precise statements of the main results on global rigidity which imply Theorem 0, cf. section \ref{sec: main thm rig chp 2}. 

A surprising consequence of our results is also the following \emph{measure rigidity} result:
\begin{maintheorem}\label{coro: conserv intro}Any TNS totally Anosov action $\al$ preserves a smooth volume form.
\end{maintheorem}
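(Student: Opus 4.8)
The plan is to read this off from the global classification theorems of Section~\ref{sec: main thm rig chp 2}. Under the TNS and totally Anosov hypotheses, those theorems produce a $C^\infty$ diffeomorphism $h\colon M\to N/\Gamma$ conjugating $\alpha$ to an algebraic action on an infranilmanifold (in fact, on a nilmanifold after passing to a finite cover). The essential point for avoiding circularity is that this conjugacy is obtained \emph{without} presupposing any invariant measure: the non-uniform redefining argument upgrades the coarse Lyapunov data to uniform control, and the passage to an algebraic model proceeds from there. Granting the classification, the corollary reduces to two elementary observations.

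\emph{Algebraic models preserve a smooth volume.} On a nilmanifold $N/\Gamma$, with $N$ simply connected nilpotent and $\Gamma$ a lattice, the normalized Haar measure $\mu$ is the unique $N$-left-invariant probability measure, and it is the measure of a smooth nowhere-vanishing top-degree form, since a bi-invariant form exists on the unimodular group $N$ and descends to the quotient. Each generator acts as an affine map $x\Gamma\mapsto (g\,A(x))\Gamma$ with $A\in\mathrm{Aut}(N)$ and $A(\Gamma)=\Gamma$; left translation by $g$ preserves $\mu$ by construction, while $A$ intertwines left translation by $n$ with left translation by $A(n)$, so $A_*\mu$ is again $N$-left-invariant and hence equals $\mu$ by uniqueness. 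In the genuinely infranilmanifold case one argues on the finite nilmanifold cover, where a finite-index subgroup of the lifted action preserves such a form, and pushes the associated smooth density forward (this gives an honest volume form when the base is orientable, and a smooth volume in general).

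\emph{Pulling back.} Set $\omega:=h^*\mu$. This is a smooth volume on $M$, and for every $a\in\ZZ^k$ one has $\alpha(a)^*\omega = \alpha(a)^* h^*\mu = h^*\bigl(h\circ\alpha(a)\circ h^{-1}\bigr)^*\mu = h^*\mu = \omega$, because $h\circ\alpha(a)\circ h^{-1}$ is the corresponding affine map and the previous step showed affine maps fix $\mu$. Thus $\alpha$ preserves the smooth volume $\omega$, as claimed.

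The only genuine obstacle is upstream and logical rather than computational: one must confirm that the classification theorem is indeed available under exactly the TNS $+$ totally Anosov hypotheses and that its proof nowhere feeds in an invariant volume, so that the present deduction is not circular. If one wished to avoid invoking the full classification, the hard step would instead be a direct argument: the SRB measure $\mu^+_a$ of an Anosov element $a$ is automatically $\ZZ^k$-invariant by commutativity and uniqueness, so it suffices to show it is absolutely continuous with respect to Lebesgue and has a smooth density; this would follow from the coincidence of the SRB measures of Anosov elements in distinct chambers whose unstable coarse Lyapunov foliations jointly span $TM$, combined with the smoothness of those foliations and their holonomies established in the body of the paper and a Journé/Livšic-type regularity bootstrap — which is precisely the technical heart one is trying to sidestep by citing the classification.
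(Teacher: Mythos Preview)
Your argument has a genuine gap: the classification theorems in Section~\ref{sec: main thm rig chp 2} are \emph{not} available under the bare hypotheses ``TNS and totally Anosov.'' Each of Theorems~\ref{main toral case}, \ref{main resonance free case}, and \ref{main Lyapunov pinching} carries an additional assumption (pairwise joint integrability of coarse Lyapunov foliations, resonance-freeness, or Lyapunov pinching respectively), and the Katok--Spatzier conjecture in full generality --- even under TNS + totally Anosov --- remains open. So the sentence ``those theorems produce a $C^\infty$ diffeomorphism $h\colon M\to N/\Gamma$'' is simply false in this setting, and the deduction collapses. You flag this yourself at the end as something ``one must confirm,'' but it is not a matter of checking non-circularity: the required classification does not exist.

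The paper's actual proof avoids classification entirely and works directly from Theorem~\ref{thm: osl spltt cont}. On the finite cover $\bar M$, the H\"older continuous Zimmer reduction gives, within each Oseledec bundle $\bar E_\chi$, a flag and H\"older metrics on the successive quotients satisfying $\|D\bar\alpha(nb)\cdot v\|_{\chi,i}=e^{\chi(nb)}\|v\|_{\chi,i}$. Wedging the induced volume forms on these quotients over all $\chi$ produces a H\"older volume form $\nu$ on $T\bar M$ with $D\bar\alpha(nb)_*\nu=e^{\sum_\chi n\chi(b)\dim\bar E_\chi}\,\nu$. Writing $\nu=\phi_0\cdot\nu_0$ against a smooth background volume $\nu_0$, the function $\phi_0$ solves a cohomological equation over a transitive Anosov element with smooth coboundary data, so Livsic regularity (\cite{LMM}) upgrades $\phi_0$ to $C^\infty$. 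Pushing forward to $M$ and averaging over $\ZZ^k/n\ZZ^k$ yields the $\alpha$-invariant smooth volume. This is much closer in spirit to the ``direct argument'' you sketch in your final paragraph than to the classification route, but the key input is the H\"older Zimmer reduction rather than SRB-measure coincidence.
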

Theorem \ref{coro: conserv intro} comes as a surprise given that it is clearly not true for a single Anosov diffeomorphism. In recent breakthrough concerning Zimmer's conjecture by \cite{BFH16, BFH17} given various strong rigidity
properties of higher rank simple groups, a measure rigidity result (see also \cite{BRW}) is proved and it plays a crucial role.

The group $\ZZ^k$ on the other hand does not enjoy as many rigidity properties as higher rank simple groups and we use here a completely different approach than that in previous measure rigidity arguments, in particular we use no entropy arguments.

We trust that our results and techniques will have further applications towards Katok-Spatzier conjecture and more general smooth rigidity problems in the future. The basic philosophy hidden behind our approach can be summarised as follows (similar approach has already been used for certain higher rank semisimple Lie group actions, cf. \cite{BFH16, BFH17}):
\begin{itemize}
\item \emph{Any measurable dynamically-defined object for a higher-rank group action with some hyperbolicity, should be H\"older continuous.}
\item \emph{Any non-uniform estimate for dynamically-defined quantities of  a higher-rank group action with some hyperbolicity, should be uniform.}
\end{itemize}
\color{black}

As already mentioned, one of the main results we prove in this paper is towards the second item above, namely: \\
\\
\textbf{Theorem 2'} \emph{For any TNS totally Anosov action $\al$, up to a polynomial factor,  the derivative $D\al$ has exactly the same ``growth speed''} \textbf{everywhere}.\\

Again this uniformity result does not hold for single Anosov diffeomorphism! More precisely, we prove the following 
 \emph{standard form} for $D\al$. 
\begin{maintheorem}\label{main: standard form intro}For any TNS totally Anosov action $\al$, there exist $n\in \ZZ^+$, a finite cover $\bar{M}$ of $M$ and  \textbf{H\"older continuous} change of coordinates on $T\bar M$, linear on the fibers such that for any $a\in \ZZ^k$ and $x\in \bar M$, we have $$D\bal(na)(x)=
\begin{pmatrix}A_1(x)&&&\\
&A_2(x)&&\\
&&\ddots&\\
&&&A_k(x)
\end{pmatrix}$$ where $$A_i=\begin{pmatrix}
e^{n\chi_i(a)}\cdot O_{i1}(x)&\ast&\cdots&\ast\\
&e^{n\chi_i(a)}\cdot O_{i2}(x)&\cdots&\ast\\
&&\ddots&\vdots\\
&&&e^{n\chi_i(a)}\cdot O_{il}(x)
\end{pmatrix}$$
Here $\bal$ is the lift of $\al|_{n\cdot \ZZ^k}$ on $\bar M$ and each $\chi_i$ is a linear functional on $\ZZ^k$ \textbf{independent} of $x$ and $O_{ij}$ are orthogonal matrices.
\end{maintheorem}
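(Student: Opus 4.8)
The plan is to build the Hölder block decomposition out of the Lyapunov/coarse Lyapunov structure of the action and then upgrade measurability to Hölder continuity using the two philosophical principles stated in the introduction. Let me sketch the steps.

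First, I would fix an Anosov element and apply the Oseledets multiplicative ergodic theorem together with the commuting structure to obtain, on a full-measure set, a splitting $T\bar M=\bigoplus_i E_i$ into coarse Lyapunov subspaces, where the $i$-th piece is characterized by a linear functional $\chi_i$ on $\ZZ^k$ (the Lyapunov exponent common to that coarse block) — here TNS is what guarantees the coarse exponents are genuinely distinct and non-proportional, so the decomposition is canonical. The totally Anosov hypothesis supplies enough Anosov elements so that every coarse Lyapunov distribution is, for a suitable choice of element, contained in a stable (or unstable) distribution of an Anosov element, hence integrates to a Hölder foliation and the distribution itself is Hölder continuous on all of $\bar M$ (not just a.e.). This is the first place the ``measurable $\Rightarrow$ Hölder'' principle is invoked: the coarse Lyapunov splitting, a priori only measurable via Oseledets, is recognized as an intersection of stable/unstable distributions of Anosov elements and is therefore continuous everywhere.

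Second, within each coarse block $E_i$ I need the finer filtration $E_i\supset E_{i1}\supset\cdots$ realized by the flag of upper-triangular blocks with diagonal $e^{n\chi_i(a)}O_{ij}(x)$. The idea is that inside a coarse Lyapunov subspace the derivative cocycle, after dividing by the scalar $e^{n\chi_i(a)}$, has all Lyapunov exponents equal to zero; so by the (non-uniform) multiplicative ergodic theorem there is a measurable flag on which the induced cocycle acts with zero exponents, and on each graded piece the action is ``sub-exponential'' — after a measurable change of frame one can arrange the graded action to be by orthogonal matrices $O_{ij}$ with at most polynomial distortion in the off-diagonal ($\ast$) entries. The non-uniform redefining argument of the paper is then used to promote this measurable frame and the measurable flag to genuinely Hölder-continuous objects: one shows that the obstruction to continuity of the normalized cocycle's invariant structures is itself a measurable dynamically-defined quantity for the higher-rank action with hyperbolicity, hence (second principle) it must be uniformly controlled, i.e. the polynomial bound on the $\ast$-entries holds pointwise and the frame depends Hölder-continuously on $x$. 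Passing to a finite cover $\bar M$ and to the sublattice $n\cdot\ZZ^k$ is the standard device to kill orientation/monodromy obstructions so that the flag and the orthogonal representatives can be chosen globally and consistently.

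Third, assemble: the common refinement of the (global, Hölder) coarse splitting and the (now Hölder, after redefining) internal flags gives the block-upper-triangular normal form, with the diagonal entries exactly $e^{n\chi_i(a)}O_{ij}(x)$, the $\chi_i$ independent of $x$ because they are the globally-defined coarse Lyapunov functionals, and the change of coordinates on $T\bar M$ linear on fibers and Hölder in the base. The main obstacle I expect is precisely the second step: extracting the internal flag and the orthogonal normalization inside a coarse block where there is no hyperbolicity to exploit directly — one only has zero exponents after normalization, so the usual ``stable manifold'' machinery does not apply, and one must instead run the non-uniform redefining argument to convert the a.e.-defined, a priori unbounded-distortion objects into uniformly-controlled Hölder ones. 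Checking that the redefining argument's hypotheses are met here (integrability of the relevant cocycle data, the closing-lemma input, and compatibility with holonomies) is where the real work lies; the rest is bookkeeping with Oseledets theory and finite covers.
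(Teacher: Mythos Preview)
Your outline has the right high-level shape but conflates two different decompositions, and this hides exactly where the hard work is. The outer block-diagonal pieces $A_i$ in the theorem correspond to the \emph{Oseledec} spaces $E_{\chi_i}$, one per Lyapunov exponent, not to the coarse Lyapunov distributions. Under TNS a coarse Lyapunov distribution $E^\chi$ can still carry several positively proportional exponents $m_1\chi,\dots,m_n\chi$, so your claim in Step~2 that ``inside a coarse Lyapunov subspace the derivative cocycle, after dividing by the scalar $e^{n\chi_i(a)}$, has all Lyapunov exponents equal to zero'' is false in general. The paper spends real effort (Sections~6.5 and~6.8) first showing the Oseledec \emph{filtration} $E_{m_k\chi}\oplus\cdots\oplus E_{m_n\chi}$ inside $E^\chi$ is H\"older continuous (via Ruelle's theorem on smoothness of fast flags along stable leaves, combined with the redefining argument), and then, only after proving polynomial deviation, using a cone-field/dominated-splitting argument to split the filtration into an honest H\"older Oseledec \emph{splitting}. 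Your Step~1 stops at the coarse splitting, which is standard, and never confronts this.

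Two further concrete gaps. First, the inner flag with orthogonal quotients $O_{ij}$ does \emph{not} come from the multiplicative ergodic theorem: when all exponents on a block coincide, Oseledec gives you nothing. The paper invokes \emph{Zimmer's amenable reduction} (Section~6.6) to produce a measurable flag with conformal structures on the factors, and then runs the redefining argument on each piece. Second, the entire machine depends on a key analytic input you do not mention: the \emph{subexponential growth estimate} for $D\tilde\alpha$ restricted to $E^\chi$ along elements in the Lyapunov hyperplane $\ker\chi$ (Section~4). This is what makes $D\alpha|_{E}$ fiber-bunched for elements $c_i$ chosen near the wall (Proposition~5.1), which in turn is what gives you the \emph{uniform} Holonomy along $W_2$; the redefining argument then plays this off against the \emph{non-uniform} Holonomy along $W\oplus W_1$ obtained from Viana's invariance principle for equal-exponent cocycles, all with respect to a Bowen--Margulis measure chosen for its local product structure. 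Without the subexponential estimate you have no fiber-bunching, no uniform Holonomy, and the redefining argument cannot start; and without Zimmer reduction you have no candidate flag to redefine.
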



\color{black}
The key point to proving Theorem \ref{main: standard form intro} is \textit{H\"older continuity} of the associated \textit{Zimmer measurable amenable reduction} for $D\al$. The main idea is to create a new and surprisingly effective mechanism, labelled \emph{non-uniform redefining argument}, to prove the H\"older continuity of general dynamically-defined object on the  fiber bundles over spaces with certain \emph{hyperbolicity}. 




Another key step in the proof of  Theorem \ref{main: standard form intro} is new and crucial subexponential estimate (section \ref{section: uniform estimate}) derived from our subtle cocycle-theoretical argument. Previously the corresponding estimate was only known (cf. \cite{FKS1}) for Anosov actions on nilmanifolds. We generalize it to arbitrary compact manifold.\color{black} \\
\\
\textbf{Acknowledgement:} We would like to thank David Fisher, Ralf Spatzier, Amie Wilkinson for useful discussions and Andy Hammerlindl,  Yi Pan for the help on a reference. We thank Boris Kalinin for useful discussions and for pointing out an error in the first version of the paper. Special thanks to Royal Institute of Technology in Stockholm for hospitality during the visit of the second author.

\section{Setting and statements}\label{sec: sett stat}

\subsection{Anosov $\ZZ^k$ actions on infranilmanifolds}
Suppose $M$ is a compact smooth manifold.
Recall that $f\in \Diff^1(M)$ is called \emph{Anosov} if there is a $Df$-invariant splitting $TM = E^{s} \oplus E^{u}$ of the tangent bundle of $M$ such that for some $k \geq 1$, any $x \in M$, and any choice of unit vectors $v^{s} \in E^{s}_{x}$, $v^{u} \in E^{u}_{x}$,
\[
\|Df^{k}(v^{s})\| < 1< \|Df^{k}(v^{u})\|,
\]

Now we consider a $\ZZ^k-$action $\al$ on a compact manifold $M$ by diffeomorphisms. The action is called \emph{Anosov} if there is an element that acts as an Anosov diffeomorphism. 

Recall that a compact nilmanifold is a quotient of a simply connected nilpotent Lie group $G$ by a cocompact discrete subgroup $\Gamma$,
and a compact infranilmanifold is a manifold that is finitely covered by a compact nilmanifold. A linear automorphism of a nilmanifold $G/\Gamma$ is a homeomorphism that is the projection of some $\Gamma$-preserving automorphism of $G$. An affne automorphism of $G/\Gamma$ is the composition of a linear automorphism of $G/\Gamma$ and a left translation. An affne automorphism of a compact infranilmanifold is a homeomorphism that lifts to an affne nilmanifold automorphism on a finite cover. All currently known examples of Anosov diffeomorphisms are topologically conjugate to affine automorphisms of infranilmanifolds.

\subsection{Lyapunov distributions and irreducibility conditions} \label{irr_conditions}Suppose $\mu$ is an ergodic probability measure for an Anosov $\ZZ^k$ action $\al$ on a compact manifold $M$. By commutativity, the Lyapunov decompositions for individual elements (cf. \cite{Ose}) of $\ZZ^k$ can be refined to a joint $\al-$invariant splitting. By multiplicative ergodic theorem \cite{Ose} there are finitely many linear functionals $\chi$ on $\ZZ^k$, a $\mu$ full measure set $P$, and an $\al$-invariant measurable splitting of the tangent bundle $TM =\oplus E_\chi$
over $P$ such that for all $a\in\ZZ^k$
and $v\in E_\chi$, the
Lyapunov exponent of $v$ is $\chi(a)$,
The splitting $\oplus E_\chi$ is called the \emph{(Oseledec) Lyapunov decomposition}, and the linear functionals
$\chi$ are called the \emph{Lyapunov functionals (exponents) of $\al$} (with respect to $\mu$). The hyperplanes $\ker_\chi\subset \RR^k$ are called
the \emph{Lyapunov hyperplanes}, and the connected components of $\RR^k-\cup_\chi \ker_\chi$are called the \emph{Weyl chambers} of $\al$. The elements in the union of the Weyl chambers are called \textit{regular}. 

For any Lyapunov functional $\chi$ the \textit{coarse Lyapunov distribution} is the direct sum of all Lyapunov spaces with Lyapunov functionals positively proportional to $\chi$: $E^\chi:=\oplus E_{\chi'}, \chi'=c\chi, c>0$. 



The following properties of Anosov actions have been used in a large body of work to describe irreducibility of the action, and they will appear in the main results of the paper:

\begin{definition}
Suppose $\al$ is a smooth $\ZZ^k$ Anosov action on $M$. $\al$ is called \emph{totally non-symlpectic} (TNS) if there is a fully supported $\al-$invariant ergodic measure $\mu$ such that there is no negatively proportional Lyapunov exponents of $\al$ with respect to $\mu$. Or equivalently, for any pair of Lyapunov distributions $E_\chi, E_{\chi'}$ there exists $a\in \ZZ^k$ such that $E_\chi, E_{\chi'}\subset E^s_a$.
\end{definition}

In the study of rigidity for Anosov actions it is quite natural to assume the presence of many Anosov elements, cf. \cite{FKS} and the references therein. In the rest of the paper we always consider  Anosov actions under the following assumption.

\begin{definition}\label{def: totally Ano}A $\ZZ^k$ action $\al$ on a compact manifold $M$ is called \textit{totally Anosov} if all non-trivial elements are Anosov.
\end{definition}
\color{black}

In the presence of sufficiently many Anosov elements, (e.g. totally Anosov action) and if the ergodic measure is of full support (such a measure always exists if there is a transitive Anosov element in the action) the coarse Lyapunov distributions are minimal non-trivial intersections of stable distributions for various elements of the action. They are well defined everywhere, H\"older continuous, and tangent to foliations with smooth leaves.  (For more details Section 2.2 in \cite{KalSpa}). 
Moreover, for any other action invariant measure of full support, and with Anosov elements in each Weyl chamber, the coarse Lyapunov distributions will be the same, as well as the Weyl chamber picture \cite{KalSpa}. 

A remarkable property for TNS totally Anosov action is the following lemma:
\begin{lemma}\label{lemma: elem trans}Each Anosov element of a TNS totally Anosov action is transitive.
\end{lemma}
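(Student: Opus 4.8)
The plan is to exploit the TNS structure together with total Anosov-ness to show that any Anosov element has a dense orbit, by ruling out the existence of a proper closed invariant set. First I would fix an Anosov element $a \in \ZZ^k$ and recall that, as noted just above the statement, under the totally Anosov hypothesis the coarse Lyapunov distributions $E^\chi$ are globally defined, H\"older continuous, tangent to foliations with smooth leaves, and the Weyl chamber picture is the same for every fully supported invariant measure. Since $\al$ is TNS, there is a fully supported ergodic $\mu$ with no negatively proportional Lyapunov exponents, so for \emph{each} pair $\chi, \chi'$ there is an element $b$ with $E^\chi, E^{\chi'} \subset E^s_b$; in particular the stable distribution $E^s_a$ of our fixed Anosov $a$ is a sum of certain coarse Lyapunov distributions, and the \emph{same} is true of $E^u_a$ — there is no coarse Lyapunov distribution lying partly in $E^s_a$ and partly in $E^u_a$, because coarse Lyapunov distributions are defined by positive proportionality classes and elements of the action preserve the splitting into these classes.

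Next I would run the standard topological-transitivity argument for Anosov systems, but using the higher-rank structure to upgrade ``nonwandering $=$ whole manifold'' (which is automatic for a transitive Anosov diffeomorphism but \emph{not} known a priori here) to genuine transitivity. Concretely: let $\Omega \subsetneq M$ be a nonempty closed $a$-invariant set; I want a contradiction with density. The set of periodic points of $a$ is dense in the nonwandering set $\Omega(a)$, and for each coarse Lyapunov foliation $W^\chi$ the minimality-type statement from \cite{KalSpa} (coarse Lyapunov foliations are tangent to globally defined foliations whose leaves through any point accumulate on everything compatible with the Weyl chamber picture) lets me propagate density along stable and unstable manifolds of $a$. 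Since by the TNS observation $W^s_a$ and $W^u_a$ are each \emph{unions of entire coarse Lyapunov leaves}, the standard ``stable $\cup$ unstable saturation of a single periodic orbit is dense'' mechanism applies: pick a periodic point $p$ of $a$ in $\overline{\Omega(a)}$, saturate by $W^s_a$ and $W^u_a$ alternately, and use the local product structure of the Anosov $a$ together with the global H\"older structure of the coarse foliations to conclude that the closure of the $a$-orbit of the $W^u_a$-leaf of $p$ is all of $M$. This forces $\Omega(a) = M$, i.e.\ $a$ is nonwandering everywhere, and then transitivity of $a$ follows from the classical dichotomy for Anosov diffeomorphisms whose nonwandering set is the whole manifold together with the fact that $a$ has a dense set of periodic points and local product structure.

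The main obstacle I expect is exactly the step that distinguishes this from the $\ZZ$-case: a priori an Anosov element of the action need not be transitive, so I cannot invoke $\Omega(a)=M$ for free, and I must instead \emph{derive} it from the higher-rank geometry. The delicate point is to show that the $W^u_a$-saturation of a single $a$-periodic orbit is dense — this is where I need that $W^u_a$ is a union of coarse Lyapunov foliations \emph{and} that these coarse foliations individually have dense leaves (or at least that their joint saturation is dense). For the latter I would lean on the TNS hypothesis in the form: for any two coarse Lyapunov distributions there is an element $b$ making them both stable for $b$, hence the pair of coarse foliations is subfoliated by $W^s_b$, and one can chain such relations across all coarse distributions to connect any two points. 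Making this chaining argument rigorous — i.e.\ that one can pass from a dense-leaf statement for individual stable foliations $W^s_b$ (which holds when $b$ is a \emph{transitive} Anosov element, but that is what we are trying to prove) — is the circular-looking part, and the honest way around it is to first establish density of $W^s_b$-leaves for \emph{some} element using the fully supported measure $\mu$ (every leaf of a coarse Lyapunov foliation is dense because $\mu$ is fully supported, ergodic, and these foliations are minimal by the intersection characterization), and only then deduce transitivity of the originally chosen $a$. I would organize the proof so that the measure-theoretic minimality input is used once, up front, to get density of all coarse Lyapunov leaves, after which the topological argument above closes cleanly.
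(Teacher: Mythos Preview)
Your proposal takes a long and potentially circular detour around a two-line argument. You correctly note that the TNS hypothesis gives a fully supported $\al$-invariant ergodic measure $\mu$, but then you fail to use the most basic consequence of this: every element $\al(a)$ preserves $\mu$, so by Poincar\'e recurrence the nonwandering set of $\al(a)$ is all of $M$. Once $\Omega(\al(a))=M$ is known, the classical fact (spectral decomposition/Anosov closing; see e.g.\ Katok--Hasselblatt, Corollary~18.3.5) that an Anosov diffeomorphism whose nonwandering set is the whole manifold must be transitive finishes the proof immediately. This is exactly the paper's argument.

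Your approach instead tries to \emph{derive} $\Omega(\al(a))=M$ from the higher-rank coarse Lyapunov geometry via a saturation/accessibility argument. You yourself flag the circularity: density of $W^s_b$-leaves is what you want to use, but that is normally a \emph{consequence} of transitivity of $b$, which is the goal. Your proposed fix --- ``density of all coarse Lyapunov leaves because $\mu$ is fully supported and ergodic'' --- is not justified as stated: ergodicity of $\mu$ typically gives density of \emph{almost every} leaf, not every leaf, and upgrading this to minimality of the coarse Lyapunov foliations requires additional work (and in the literature is usually proved \emph{after} one knows transitivity of Anosov elements, not before). So the argument as written has a genuine gap at precisely the step you identify as delicate. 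The remedy is not to patch the saturation argument but to abandon it: the fully supported invariant measure already hands you $\Omega=M$ for free via Poincar\'e recurrence.
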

\begin{proof}By TNS condition alll non trivial elements $\al$ are Anosov and preserving a fully supported finite measure. Then the non-wandering set for each Anosov element is the whole manifold. It is known that this implies the Anosov diffeomorphism is transitive, cf. \cite{KH} Corollary 18.3.5. 
\end{proof}

\subsection{Standard form for the derivative cocycle of a TNS totally Anosov action}\label{sec: gl rigid}
Throughout the paper, smoothness of diffeomorphisms, actions, and manifolds
is assumed to be $C^\infty$, even though all definitions and some results (especially the key Theorem \ref{main: standard form intro}) can
be formulated in lower regularity. Suppose now action $\al$ is assumed to be an TNS totally Anosov $\ZZ^k$ action on a compact manifold $M$. Then Theorem \ref{main: standard form intro} could be restated as the following, there exist $n\in \ZZ^+$ and a finite cover $\bar M$ of $M$ such that 
\begin{maintheorem}\label{thm: osl spltt cont}
\begin{enumerate}
\item (\emph{Independence of the choice of measure}) The Lyapunov exponents of $D\bal$ are the same for any $\bal-$invariant ergodic measure $\mu$. Here $\bal$ is the lifting of $\al|_{n\cdot \ZZ^k}$ on $\bar M$.
\item (\emph{H\"older continuity of Oseledec splitting}) The cocycle $D\bal|_{n\cdot \ZZ^k}$ preserves a H\"older continuous Oseledec splitting on $T\bar M$:
\begin{equation}\label{eqn: hold osel spl }
T\bar M=\bigoplus \bar E_\chi
\end{equation}
\item (\emph{Polynomial deviation}) Within each Oseledec space $\bar E_\chi$, $D\bal$ has the \textbf{polynomial deviation} in the sense that there exist $C>1, m\in \ZZ^+$ such that for any unit vector $v\in \bar E_\chi $, any $b\in \ZZ^k$,  
\begin{equation}\label{eqn: ply dev within Osl space}
C^{-1}(\|b\|+1)^{-m}e^{\chi(b)}\leq \|D\bal(b)\cdot v\|\leq C(\|b\|+1)^{m}e^{\chi(b)}
\end{equation}
\item (\emph{H\"older continuous Zimmer amenable reduction})Within each Oseledec space $\bar E_\chi$, there exist a $D\bal|_{n\cdot \ZZ^k}-$invariant H\"older continuous flag $$\{0\}=\cE_{\chi,0}\subsetneqq \cE_{\chi,1}\subsetneqq\cdots \subsetneqq \cE_{\chi, l(\chi)}=\bar E_\chi$$
and a family of H\"older continuous metrics $\|\|_{\chi, i}$ within the bundles $\cE_{\chi,i}/\cE_{\chi, i+1}$ such that for any $v
\in \cE_{\chi,i}/\cE_{\chi, i+1}$, any $b\in \ZZ^k$,
\begin{equation}\label{eqn: unif exp gr}
\|D\bal(nb)\cdot v\|_{\chi, i}=e^{\chi(nb)}\cdot \|v\|_{\chi,i}
\end{equation}
\end{enumerate} 
\end{maintheorem}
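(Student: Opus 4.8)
The plan is to derive the four statements of Theorem \ref{thm: osl spltt cont} in sequence, each one feeding into the next, with the bulk of the work going into establishing H\"older continuity of the Oseledec splitting and the associated Zimmer amenable reduction.

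First I would set up the coarse Lyapunov machinery. Since $\al$ is TNS totally Anosov, by the discussion preceding Lemma \ref{lemma: elem trans} there is a fully supported ergodic measure $\mu$, the coarse Lyapunov distributions $E^\chi$ are globally defined, H\"older continuous, and tangent to foliations with smooth leaves, and the Weyl chamber picture is independent of the choice of such $\mu$. The TNS hypothesis means every coarse Lyapunov distribution lies in the stable distribution of some Anosov element, so each $E^\chi$ carries a canonical family of stable holonomies, which are H\"older and depend H\"older-continuously on the base point. I would then invoke the non-uniform redefining argument (the central new mechanism advertised in the introduction, to be developed in the body of the paper) to upgrade measurable $D\bal$-equivariant structures on the fibers of $E^\chi$ to H\"older-continuous ones: the point is that a measurable object which is equivariant under the action \emph{and} compatible with stable/unstable holonomies for the many Anosov elements is forced, after passing to a finite cover $\bar M$ and a finite-index subgroup $n\cdot\ZZ^k$, to be H\"older. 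Applying this first to the Lyapunov functionals themselves gives part (1): the functionals $\chi$ cannot depend on the ergodic measure because they are read off from the globally-defined coarse Lyapunov splitting, whose combinatorics is measure-independent.

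Next, within a fixed coarse Lyapunov bundle $E^\chi$ (now pulled back to $\bar M$ as $\bar E^\chi$), all Lyapunov functionals are positively proportional, so after restricting to a suitable one-parameter subdirection the cocycle becomes, up to the scalar $e^{\chi(b)}$, a cocycle with zero Lyapunov exponents. Here I would run the cocycle-theoretic subexponential estimate (the generalization of the Fisher--Kalinin--Spatzier estimate from nilmanifolds to arbitrary compact manifolds, Section \ref{section: uniform estimate}): it says the non-uniform subexponential deviation of such a cocycle is actually uniform and polynomially bounded, which is exactly the polynomial deviation bound \eqref{eqn: ply dev within Osl space}, i.e. part (3). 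For part (2), the H\"older Oseledec splitting $\bar M = \bigoplus \bar E_\chi$ inside each coarse bundle is obtained by applying the non-uniform redefining argument to the measurable Oseledec filtration: the polynomial deviation from (3) guarantees the filtration quotients have well-separated growth, so the filtration is a measurable dynamically-defined object, and the redefining argument makes it H\"older. Splitting the filtration into a direct sum requires choosing complements compatibly with holonomies, which the redefining mechanism again provides.

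Finally, part (4): inside $\bar E_\chi$ the restricted cocycle, divided by $e^{\chi(b)}$, has all Lyapunov exponents zero, hence by Zimmer's amenable reduction theorem there is a measurable reduction of its structure group to a compact (orthogonal) group modulo a unipotent part; concretely a measurable invariant flag $\{0\}=\cE_{\chi,0}\subsetneqq\cdots\subsetneqq\cE_{\chi,l(\chi)}=\bar E_\chi$ together with measurable metrics on the successive quotients in which the cocycle acts by isometries times $e^{\chi(nb)}$. The non-uniform redefining argument, applied once more, promotes both the flag and the metrics to H\"older-continuous objects, giving \eqref{eqn: unif exp gr}; assembling the orthogonal blocks $O_{ij}$ across all $\chi$ yields the standard form of Theorem \ref{main: standard form intro}. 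The main obstacle is unquestionably the non-uniform redefining argument itself: one must show that a merely measurable, a priori wildly discontinuous fiberwise object, known only to be equivariant and holonomy-compatible, becomes H\"older after passing to a finite cover and finite-index subgroup. The delicate points are (a) getting genuine H\"older — not just continuous — regularity out of the higher-rank structure, which forces the passage to $\bar M$ and $n\cdot\ZZ^k$ to kill monodromy obstructions, and (b) ensuring the subexponential estimate of Section \ref{section: uniform estimate} holds with no a priori uniform control (no uniform quasiconformality), since removing precisely those assumptions is the whole point of the paper.
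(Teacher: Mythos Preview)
Your outline captures the spirit of the argument --- the non-uniform redefining mechanism, Zimmer amenable reduction, and the passage to a finite cover are indeed the main ingredients --- but the logical dependencies are inverted in a way that creates a genuine gap, and several essential intermediate tools are missing.

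The most serious problem is your treatment of part (3). You write that the subexponential estimate of Section~\ref{section: uniform estimate} ``says the non-uniform subexponential deviation of such a cocycle is actually uniform and polynomially bounded, which is exactly the polynomial deviation bound \eqref{eqn: ply dev within Osl space}.'' This is false. Lemma~\ref{lemma:sub exp grwth on plane} says only that for $b$ \emph{on a Lyapunov hyperplane}, $\|D\tal(nb)|_E\|$ grows subexponentially; it does not give a polynomial bound, and it says nothing about elements off the hyperplane. In the paper the polynomial deviation is obtained much later (Section~\ref{sec: cls lem}), \emph{after} the H\"older Zimmer reduction is already in hand: one first uses a closing-lemma argument (Lemma~\ref{lemma: coh eqn closing}) to build a H\"older metric on each quotient $\cE_i/\cE_{i-1}$ in which the cocycle acts by exactly $e^{m_n\chi(b)}$ times an orthogonal map, and only then a purely linear-algebraic lemma about products of block upper-triangular matrices with orthogonal diagonal blocks yields the polynomial bound. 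So (4) precedes (3), not the other way around; and (1) is a consequence of \eqref{eqn: unif exp gr}, not of the measure-independence of the coarse Lyapunov picture (which only pins down proportionality classes of $\chi$, not the $\chi$ themselves).

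Second, your description of the redefining argument omits the specific mechanism that makes it work. The paper fixes the Bowen--Margulis measure $\mu$ of one particular element $\al(c_4)$ so that $\mu$ has local product structure with respect to $W_2$ and $W\oplus W_1$. Along $W_2$ the cocycle $D\al(c_4)|_E$ is \emph{uniformly} fiber-bunched (this is exactly what the subexponential estimate and Proposition~\ref{prop: E12 splt}(5) buy you), so the stable Holonomy there is uniform and H\"older. Along $W\oplus W_1$ one has only \emph{non-uniform} Holonomy: for the Oseledec filtration this comes from Ruelle's Theorem~\ref{thm: r79}, and for the Zimmer flag and conformal structures from Viana's non-uniform invariance principle (Propositions~\ref{prop: nonunif holo def} and \ref{prop: non unif inv prpl}). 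The redefining argument (Lemma~\ref{lemma: product structure and continuity}) exploits precisely this asymmetry --- uniform in one direction, non-uniform in the other --- together with the product structure of $\mu$. None of Bowen--Margulis, Ruelle, or Viana appear in your sketch, and without them the claim ``holonomy-compatible plus equivariant implies H\"older'' has no content.

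Finally, turning the H\"older Oseledec \emph{filtration} into a H\"older \emph{splitting} is not done by ``choosing complements compatibly with holonomies.'' The paper first obtains polynomial deviation inside the top piece $E_{m_n\chi}$, which forces the induced cocycle on $(E_{m_{n-1}\chi}\oplus E_{m_n\chi})/E_{m_n\chi}$ to dominate $D\al|_{E_{m_n\chi}}$ by a genuine exponential gap; a cone-field / dominated-splitting argument (Lemma~\ref{lemma: app cone}) then produces the H\"older complement $E_{m_{n-1}\chi}$, and one inducts downward. This step is where (3) feeds back into (2), and it cannot be replaced by another invocation of the redefining argument.
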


As a consequence, we can prove Theorem \ref{coro: conserv intro}.

\begin{proof}[Proof of Theorem \ref{coro: conserv intro}]We consider the finite cover $\bar M$, the lift $\bal$ and the positive integer $n$ we got from Theorem \ref{thm: osl spltt cont}. By the part on Zimmer reduction in Theorem \ref{thm: osl spltt cont}, we can  construct a H\"older continuous volume form $\nu_\chi$ on $\bar E_\chi$ by taking the wedge product of the volume form induced by $\|\|_{\chi,i}$ within each $\cE_{\chi,i}/\cE_{\chi, i-1}$. Then we set $$\nu:=\wedge_{\chi} \nu_\chi$$
By \eqref{eqn: unif exp gr}, for any element $b\in \ZZ^k$  $\bal(nb)$ we have $D\bal(nb)_\ast \nu=e^{\sum_{\chi}n\chi(b)}\cdot \nu$. As the proof of Lemma \ref{lemma: elem trans} we know without loss of generality we could assume $\bal(nb)$ is a transitive Anosov diffeomorphism on $\bar M$.


Suppose that $\nu=\phi_0(x)\cdot\nu_0$, where $\nu_0$ is an arbitrary smooth volume on $\bar M$, then $\phi_0$ is a continuous solution of the cohomological equation 
\begin{equation}\label{eqn: coh eqn vol}\phi(\bal(nb)\cdot x)^{-1}\cdot \phi(x)=e^{-\sum_{\chi}\chi(nb)}\cdot \det(D\bal(nb) , \nu_0)
\end{equation}
since the right hand side of \eqref{eqn: coh eqn vol} is smooth and $\bal(nb)$ is a smooth transitive Anosov diffeomorphism, and $\phi_0(x)$ is bounded from below,  by Livsic theorem (see \cite{LMM}) the continuous solution $\phi_0$ is in fact smooth. Therefore $\bal|_{n\ZZ^k}$ preserves a smooth volume form $\nu$ on $\bar M$. Then $\nu$ induces a probability measure $p_{\nu}$ on $\bar M$.

Let $p_{\nu'}:=\Pi_\ast(p_\nu)$ to be the probability measure on $M$, where $\Pi: \bar M \to M$ is the covering map. Then obviously $p_{\nu'}$ is induced from some smooth volume form $\nu'$ on $M$. Moreover since for any $b\in \ZZ^k, ~~ \Pi\circ\bal(b)=\al(b)\circ\Pi$, then $p_{\nu'}$ as well as $\nu'$ is $\al|_{n\cdot \ZZ^k}-$invariant.

To get an $\al|_{\ZZ^k}-$invariant measure (not only $\al|_{n\ZZ^k}-$invariant), we consider $$p_{\nu''}:=\frac{1}{N^k}\sum_{0\leq c_i<N}\al(\sum_{i=1}^k c_i e_i)_\ast p_{\nu'}$$
where $\{e_i,i=1,\dots k\}$ is the canonical basis of $\ZZ^k$. Since for any $b$, $p_{\nu'}$ is $\al(nb)-$invariant, then by our construction $p_{\nu''}$ is indeed $\al-$invariant. Obviously $p_{\nu''}$ is induced from a smooth volume form $\nu''$, hence $\al$ is a volume preserving action.\end{proof}
\color{black}

\subsection{Global rigidity for TNS totally Anosov $\ZZ^k-$actions}\label{sec: main thm rig chp 2}

Recall that a \emph{nilmanifold} is a quotient $N/\Gamma$, where $N$ is a simply connected
nilpotent Lie group and $\Gamma$ is a discrete subgroup. An \emph{infranilmanifold}
$M$ is a manifold finitely covered by a nilmanifold. We call a diffeomorphism of $N$ affine if it is a composition of
an automorphism of $N$ with a left translation by an element of $N$.  Diffeomorphisms of $M$ covered
by affine diffeomorphisms of N are again called affine. The Franks-Manning conjugacy
theorem generalizes to infra-nilmanifolds: Suppose $M'$ is a smooth manifold
homeomorphic with an infra-nilmanifold. Then every Anosov diffeomorphism of
$M'$ is conjugate to an affine diffeomorphism of $M$ by a homeomorphism.

Let action $\al$ be TNS totally Anosov. As mentioned in the introduction, we have the following smooth rigidity results for TNS totally Anosov $\ZZ^k-$actions. 

As in \cite{KS06}, we say two foliations $\W^1$ and $\W^2$ of a manifold are \textit{topologically jointly
integrable} if there is a topological foliation $\W$ such that for any $x$ the map
$$\phi: \W^1(x)\times\W^2(x)\to\W(x): \phi (y,z)\to \W^1(y)\cap\W^2(z)$$
is a well-defined local homeomorphism, i.e. the foliations $\W^1$ and $\W^2$
give a local product structure on the leaves of $\W$.

\begin{maintheorem}\label{main toral case}  If any two coarse Lyapunov foliations of $\al$ are topologically jointly integrable, then $\al$ is smoothly conjugate to a $\ZZ^k-$action by affine automorphisms on an infranilmanifold.
\end{maintheorem}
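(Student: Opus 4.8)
The plan is to follow the now-standard route to global rigidity on general manifolds — produce a topological conjugacy to an affine model and then bootstrap it to smooth — but using Theorem \ref{thm: osl spltt cont} in place of the uniform-control hypotheses that appeared in \cite{KS06, KS07, DX17}. First I would pass to the finite cover $\bar M$, the lift $\bal$, and the integer $n$ furnished by Theorem \ref{thm: osl spltt cont}, so that we may work with a H\"older Oseledec splitting $T\bar M=\bigoplus \bar E_\chi$ whose functionals $\chi$ are independent of the (fully supported) invariant measure, and with the polynomial deviation estimate \eqref{eqn: ply dev within Osl space}. The topological joint integrability hypothesis, together with the coarse Lyapunov foliations being H\"older with smooth leaves (Section 2.2 of \cite{KalSpa}), gives a topological affine-type structure: the coarse Lyapunov web is locally a product, and one shows (as in \cite{KS06}) that iterating Anosov elements the leaves of each coarse foliation have polynomial — hence, after the standard-form reduction, essentially ``linear'' — growth, which is exactly what the polynomial deviation in part (3) of Theorem \ref{thm: osl spltt cont} supplies. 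This lets one build a topological conjugacy $h$ from $\bal$ to an affine $\ZZ^k$-action on an infranilmanifold: pick a transitive Anosov element (Lemma \ref{lemma: elem trans}), use that the joint integrability forces the fundamental group to contain a finite-index nilpotent subgroup (Franks–Manning plus the product structure of the web), produce the algebraic model, and check the conjugacy intertwines the full $\ZZ^k$-action because the web is preserved.

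The second half is the smoothing argument. Having a bi-H\"older conjugacy $h$ to an affine model, I would show $h$ is smooth along each coarse Lyapunov foliation. The key input here is that the derivative cocycle is, after the standard-form reduction, conformal-up-to-polynomial on each Oseledec block with measure-independent exponents: this is precisely what kills the usual obstruction (non-conformality / resonances), because within each $\cE_{\chi,i}/\cE_{\chi,i+1}$ equation \eqref{eqn: unif exp gr} says $D\bal$ acts by a genuine scalar $e^{\chi(nb)}$ times an orthogonal matrix. One then runs the non-stationary normal forms / Feres–Katok machinery along each coarse leaf to get $C^\infty$ regularity of $h$ restricted to leaves of each coarse Lyapunov foliation, with uniform estimates (uniform now, not just along a full-measure set, thanks to the everywhere-defined H\"older data). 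Because the action is TNS, every coarse foliation lies in the stable foliation of some element, so one has enough foliations whose leaves span $T\bar M$; a Journé-type lemma (\cite{LMM}, or the version in \cite{KS06}) then upgrades leafwise smoothness to global smoothness of $h$ on $\bar M$. Finally descend from $\bar M$ to $M$: the conjugacy is equivariant under the deck group, so it pushes down to a smooth conjugacy of $\al$ to an affine action on the infranilmanifold $M$ (or a finite cover, which is again an infranilmanifold).

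The main obstacle, and the place where Theorem \ref{thm: osl spltt cont} is doing essentially all the work, is the leafwise smoothing step: in the earlier literature one assumed uniform quasiconformality (or TNS plus a priori regularity) precisely to make the normal-forms argument along coarse Lyapunov leaves converge. Here I would have to check carefully that the \emph{polynomial} deviation in \eqref{eqn: ply dev within Osl space} — as opposed to bounded distortion — is still enough to run non-stationary linearization: the sub-polynomial errors must be summable after composing along orbits, which should follow from the standard trick of absorbing polynomial factors into the exponential gaps between distinct $\chi$'s (using TNS to guarantee such gaps in a suitable half-space), but the bookkeeping is delicate. A secondary subtlety is verifying that topological joint integrability of \emph{coarse} Lyapunov foliations propagates to joint integrability of the finer Oseledec pieces well enough to identify the algebraic model; this is where I expect to invoke the Weyl chamber picture and the measure-independence in part (1) to rule out exotic factors.
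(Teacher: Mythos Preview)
Your two-step outline (topological conjugacy, then smooth it) is correct in spirit, but the first step has a genuine gap, and the mechanism you propose is not the one that works here.

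You write that ``joint integrability forces the fundamental group to contain a finite-index nilpotent subgroup (Franks--Manning plus the product structure of the web).'' This is backwards: Franks--Manning presupposes that the manifold is already an (infra)nilmanifold and then produces the conjugacy; it says nothing about $\pi_1(M)$ for a general $M$. The paper's actual route to the topological conjugacy is quite different and is where almost all the work lies. One first uses the joint-integrability hypothesis together with the subexponential estimate (Lemma \ref{lemma:sub exp grwth on plane}) to show that every coarse Lyapunov distribution is uniformly $C^\infty$ (Corollary \ref{coro: smooth Wi}), hence by Frobenius any sum of coarse Lyapunov distributions is integrable. This already gives Brin's ``infinitely extended corresponding mapping'' criterion \cite{B77}, so any Anosov element has \emph{global} product structure on the universal cover. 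The crucial step is then to upgrade this to \emph{polynomial} global product structure (Lemma \ref{lemma: pgps for f}): one needs to bound $d_{W^s}(x,[x,y])$ polynomially in $d(x,y)$, and this is exactly where Theorem \ref{thm: osl spltt cont} enters --- the polynomial deviation \eqref{eqn: ply dev within Osl space} gives a $\log$-polynomial bound on holonomy derivatives between coarse Lyapunov leaves (Lemma \ref{lemma: tor cas Hol est}), which after a careful telescoping argument along a geodesic yields the polynomial bound. Hammerlindl's theorem \cite{Ha} (building on Brin--Manning) then says that an Anosov diffeomorphism with polynomial global product structure is topologically conjugate to an infranilmanifold automorphism. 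None of this is captured by ``the joint integrability forces $\pi_1$ to be virtually nilpotent.''

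For the second step, your proposed normal-forms-along-leaves plus Journ\'e argument is much harder than what is actually needed, and it is not clear it goes through as written (you would need to linearize along coarse leaves that may carry several Lyapunov exponents, and the polynomial deviation alone does not obviously give the $1/2$-pinching required for non-stationary linearization). The paper simply observes that once $M$ is known to be an infranilmanifold and the linearization is TNS with no rank-one factor, the smooth conjugacy is a direct citation of Rodriguez Hertz--Wang \cite{HW}. So the smoothing half of your plan is both unnecessary and uncertain; the real content of the theorem is entirely in establishing polynomial global product structure.
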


The following higher rank assumption for Anosov action is considered in \cite{KS07}.
\begin{definition}\label{res-free}$\al$ is called \emph{resonance-free} if there are no Lyapunov exponents $\chi_1, \chi_2,\chi_3$ such that $\chi_1-\chi_2$ is proportional to $\chi_3$. 
\end{definition}
\emph{A priori}, the resonance-free condition depends on the choice of the ergodic invariant measure $\mu$, while the TNS condition does not. However, by Theorem \ref{thm: osl spltt cont} we know under the TNS and totally Anosov assumption,  resonance-free condition actually does not depend on the choice of $\mu$ either.

\begin{maintheorem}\label{main resonance free case} If $\al$ is resonance free, then a finite cover of $\al$ is smoothly conjugate to a $\ZZ^k-$action by affine automorphisms on a torus. \end{maintheorem}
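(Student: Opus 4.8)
The plan is to derive the statement from Theorem~\ref{main toral case} and Theorem~\ref{thm: osl spltt cont}, running the argument of Katok--Spatzier \cite{KS07} while letting our standard form supply the uniform-control hypotheses that \cite{KS07} had to assume. Throughout I work with the finite cover $\bar M$, the integer $n$ and the lifted action $\bal$ from Theorem~\ref{thm: osl spltt cont}; resonance-free is inherited by $\bal$ because finite covers do not change Lyapunov exponents. The first remark is that resonance-free forces each coarse Lyapunov distribution to be a single Oseledec space: if $\chi'=c\chi$ with $c>0$ and $c\ne1$ were two distinct Lyapunov exponents, then $(\chi',\chi,\chi)$ is a resonant triple since $\chi'-\chi=(c-1)\chi\propto\chi$. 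Hence the coarse Lyapunov splitting equals the Hölder Oseledec splitting $T\bar M=\bigoplus\bar E_\chi$, and within each $\bar E_\chi$ we have the polynomial deviation \eqref{eqn: ply dev within Osl space} and the Hölder Zimmer flag with isometric graded action \eqref{eqn: unif exp gr}; in particular $D\bal$ is quasiconformal up to a polynomial factor along each coarse Lyapunov bundle, uniformly on $\bar M$, which is exactly the control assumed in the earlier literature.

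Next I would build non-stationary normal forms along each coarse Lyapunov foliation $\W^\chi$ (Guysinsky--Katok, Kalinin--Sadovskaya). Since $\bar E_\chi$ carries a single Lyapunov exponent there are no resonances among its exponents, so the non-stationary normal form theory yields a $C^\infty$ linearization of $\W^\chi$: each leaf acquires a canonical vector-group structure $V^\chi\cong\RR^{\dim\bar E_\chi}$, Hölder-continuous in the base point, on which the action acts linearly; the global estimates this requires are precisely the uniform control obtained above. With the normal forms in place one runs the Katok--Spatzier commutator estimate along stable leaves. Fix non-proportional Lyapunov exponents $\chi_1,\chi_2$. By the TNS condition there is $a$ with $\bar E_{\chi_1},\bar E_{\chi_2}\subset E^s_a$, so $\W^{\chi_1}$ and $\W^{\chi_2}$ both lie in $\W^s_a$; the obstruction to their topological joint integrability is carried by the coarse Lyapunov bundles $\bar E_\chi$ with $\chi=n_1\chi_1+n_2\chi_2$, $n_1,n_2\ge1$, the lowest one being $\bar E_{\chi_1+\chi_2}$. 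But if $\chi_1+\chi_2$ were a Lyapunov exponent then $(\chi_1+\chi_2,\chi_1,\chi_2)$ would be resonant, since $(\chi_1+\chi_2)-\chi_1=\chi_2$; so $\chi_1+\chi_2$ is not a Lyapunov exponent, the first-order obstruction vanishes, and consequently all the higher $\bar E_{n_1\chi_1+n_2\chi_2}$ are absent as well. Hence $\W^{\chi_1}$ and $\W^{\chi_2}$ are topologically jointly integrable.

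Since all pairs of coarse Lyapunov foliations are then topologically jointly integrable, Theorem~\ref{main toral case} applies and gives that $\al$ is smoothly conjugate to a $\ZZ^k$-action by affine automorphisms of an infranilmanifold $M_0$. Passing to the nilmanifold $N/\Gamma$ that finitely covers $M_0$ and to the lifted affine action (on a finite-index subgroup of $\ZZ^k$ if necessary), the weights of the linear parts on $\ggG=\mathrm{Lie}(N)$ are exactly the Lyapunov functionals; the same resonance-free computation gives $[\ggG_{\chi_1},\ggG_{\chi_2}]\subseteq\ggG_{\chi_1+\chi_2}=\{0\}$ for all weights $\chi_1,\chi_2$ and $[\ggG_\chi,\ggG_\chi]\subseteq\ggG_{2\chi}=\{0\}$, so $\ggG$ is abelian, $N/\Gamma$ is a torus, and a finite cover of $\al$ is smoothly conjugate to a $\ZZ^k$-action by affine automorphisms of a torus (even by automorphisms, as the totally Anosov assumption makes every linear part hyperbolic, so the common translation part can be conjugated away).

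The main obstacle is the middle paragraph: extracting topological joint integrability of the coarse Lyapunov foliations from the resonance-free condition. This is the technical core of \cite{KS07} and rests on the delicate interaction between the non-stationary normal forms and the commutator/holonomy estimates along stable leaves; the point of the present approach is that the uniform control those estimates need is now a theorem (Theorem~\ref{thm: osl spltt cont}) rather than a hypothesis.
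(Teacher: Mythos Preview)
Your route is genuinely different from the paper's, and the middle paragraph contains a real gap.

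The paper does \emph{not} reduce the resonance-free case to Theorem~\ref{main toral case}. Instead it (i) proves the coarse Lyapunov distributions are $C^\infty$ via the $C^r$-section theorem, using resonance-freeness to separate the exponents $\chi_j(b)$ along a generic direction $b$ in each Lyapunov hyperplane; (ii) builds, on each coarse Lyapunov foliation, the leafwise flat connection coming from the non-stationary linearization (the pointwise $1/2$-pinching follows from polynomial deviation); (iii) shows this connection is globally smooth by identifying its parallel transport with Holonomies and running a second Journ\'e/\,$C^r$-section argument on the bundle; (iv) assembles a smooth $\al$-invariant affine connection on $TM$ and invokes Benoist--Labourie \cite{BL} to land on an infranilmanifold; (v) uses the resonance-free condition as in \cite{KS07} to force the nil Lie algebra to be abelian. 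So the key lemma is smoothness of an invariant connection, not joint integrability.

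Your argument for joint integrability breaks at the sentence ``the first-order obstruction vanishes, and consequently all the higher $\bar E_{n_1\chi_1+n_2\chi_2}$ are absent as well.'' Resonance-freeness does rule out $\chi_1+\chi_2$ and any $n_1\chi_1+\chi_2$, $\chi_1+n_2\chi_2$ as exponents (since, e.g., $(n_1\chi_1+\chi_2)-\chi_2=n_1\chi_1\propto\chi_1$), but it does \emph{not} rule out $n_1\chi_1+n_2\chi_2$ with $n_1,n_2\ge 2$: for instance $\chi_3=2\chi_1+2\chi_2$ can coexist with $\chi_1,\chi_2$ in a resonance-free system when $\chi_1,\chi_2$ are linearly independent, because none of $\chi_3-\chi_1=\chi_1+2\chi_2$, $\chi_3-\chi_2=2\chi_1+\chi_2$, $\chi_1-\chi_2$ is proportional to $\chi_1,\chi_2$ or $\chi_3$. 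More importantly, even granting that $\chi_1+\chi_2$ is not an exponent, you have not explained why this forces the holonomy/commutator obstruction to vanish; the normal-form commutator estimates in \cite{KS07} are not organized as ``component in $E_{\chi_1+\chi_2}$,'' and there is no general principle saying the absence of a first resonance kills all higher ones. Since joint integrability is equivalent, \emph{a posteriori}, to the conclusion of the theorem (on a torus all linear foliations are jointly integrable), proving it directly is essentially as hard as the theorem itself; this is why \cite{KS07} and the present paper take the invariant-connection route instead. Your first and last paragraphs are fine, but the bridge between them needs to be replaced by the smoothness-of-connection argument.
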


Now we consider a \textit{narrow band type} condition for Lyapunov exponents. We say $\al$ is \textit{Lyapunov pinching} if there is a nontrivial element $\al(a),a\in \ZZ^k$ such that the top and bottom Lyapunov exponents $\lambda^{u(s)}_\pm(a)$ within stable and unstable distributions of $\al(a)$ satisfy one of the following inequalities:
$$1+\frac{\lambda^u_-(a)}{\lambda^u_+(a)}>\frac{\lambda^s_-(a)}{\lambda^s_+(a)}, 1+\frac{\lambda^s_+(a)}{\lambda^s_-(a)}> \frac{\lambda^u_+(a)}{\lambda^u_-(a)}$$
A uniform version of our condition has been considered in \cite{B77, B78, B80, BM}.
\begin{maintheorem}\label{main Lyapunov pinching}If $\al$ is Lyapunov pinching then $\al$ is smoothly conjugate to a $\ZZ^k-$action by affine automorphisms on an infranilmanifold.
\end{maintheorem}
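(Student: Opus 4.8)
The plan is to reduce the Lyapunov pinching hypothesis to the setting already handled by Theorem \ref{main toral case}, i.e. to prove that under Lyapunov pinching any two coarse Lyapunov foliations of $\al$ are topologically jointly integrable. The starting point is Theorem \ref{thm: osl spltt cont}: after passing to the finite cover $\bar M$ and the subaction $\al|_{n\ZZ^k}$, the derivative cocycle has a H\"older Oseledec splitting with polynomial deviation \eqref{eqn: ply dev within Osl space} inside each coarse Lyapunov space, and the Lyapunov data is independent of the invariant measure. In particular the coarse Lyapunov foliations $\W^\chi$ are well-defined, H\"older, with smooth leaves, and we have uniform (subexponential-corrected) contraction/expansion rates governed by the linear functionals $\chi$.

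First I would fix a regular element $a\in\ZZ^k$ whose existence is guaranteed by the pinching hypothesis, and analyze the non-stationary normal form / holonomy structure along each coarse Lyapunov leaf. The two inequalities
$1+\lambda^u_-(a)/\lambda^u_+(a)>\lambda^s_-(a)/\lambda^s_+(a)$ and the symmetric one are exactly the narrow-band (bunching) conditions that force the stable and unstable holonomies of $\al(a)$ to be $C^{1}$ (indeed, to have a good linearization along the leaf), by the classical arguments of Bri\v{n} and collaborators \cite{B77, B78, B80, BM}; the novelty here is that we only have these rate inequalities on average, but the polynomial deviation estimate \eqref{eqn: ply dev within Osl space} upgrades the averaged bunching to a uniform one up to subexponential factors, which is enough to run the invariance-principle / $C^{1+}$-holonomy machinery. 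So the first block of work is: use \eqref{eqn: ply dev within Osl space} plus pinching to get uniformly bunched, hence sufficiently regular, intra-leaf holonomies for a suitable Anosov element.

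Next I would use the higher-rank structure — TNS and the presence of an Anosov element in each Weyl chamber — to run the standard \emph{commuting-holonomies} argument (in the spirit of \cite{KS07, KalSpa}): given two coarse Lyapunov distributions $E^{\chi_1}, E^{\chi_2}$, by TNS there is an element $b$ with both contained in $E^s_b$, and the regularity obtained in the previous step lets one show that the $\W^{\chi_1}$- and $\W^{\chi_2}$-holonomies of the stable foliation of $\al(b)$ commute, which is precisely topological joint integrability of $\W^{\chi_1}$ and $\W^{\chi_2}$. Then Theorem \ref{main toral case} applies verbatim and gives the smooth conjugacy to an affine $\ZZ^k$-action on an infranilmanifold, completing the proof.

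The main obstacle, I expect, is the first block: turning the \emph{averaged} pinching inequalities into \emph{uniform} bunching strong enough for the holonomy-regularity theorems. A priori the narrow-band results of \cite{B77, B78, BM} are stated with pointwise (uniform) rate gaps; here the gaps hold only for Lyapunov exponents, so one must genuinely exploit the uniformity output of Theorem \ref{thm: osl spltt cont} — the polynomial-deviation bound \eqref{eqn: ply dev within Osl space} and the measure-independence of the Oseledec data — to control the Taylor remainders along leaves. Care is also needed because the pinching element $\al(a)$ need not be the same as the TNS element $\al(b)$ used for joint integrability, so one has to combine the two via the Weyl-chamber/coarse-Lyapunov framework rather than working with a single diffeomorphism.
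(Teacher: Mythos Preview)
Your route is substantially different from the paper's, and I do not see how to close the central gap. The paper does \emph{not} pass through Theorem~\ref{main toral case} at all. Instead it observes that the Lyapunov pinching inequalities, combined with the polynomial deviation bound \eqref{eqn: ply dev within Osl space} from Theorem~\ref{thm: osl spltt cont}, imply that for some element $a$ the \emph{Mather spectrum} of $\al(a)$ satisfies the Brin--Manning pinching condition \eqref{eqn: pinched M spec}. Brin--Manning \cite{BM} then gives, as a black box, that $\al(a)$ --- hence the whole action --- is \emph{topologically} conjugate to an affine action on an infranilmanifold. TNS rules out rank-one factors of the linearization (Lemma~4.1 of \cite{DX}), and one finishes with the global smooth rigidity theorem of Rodriguez Hertz--Wang \cite{HW} on infranilmanifolds. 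No joint-integrability argument is needed.

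Your plan, by contrast, tries to derive topological joint integrability of every pair of coarse Lyapunov foliations from the pinching of a single element $a$, via a ``commuting holonomies'' step. This is where I think the argument breaks down. The Brin--Manning inequalities are not a bunching condition for leafwise holonomy regularity in the usual sense; they are the quantitative input to Brin's \emph{infinite extension} argument for global product structure on the universal cover. Even granting $C^{1+}$ holonomies for $\al(a)$, I do not see why, for an arbitrary pair $\chi_1,\chi_2$ and a TNS element $b$ with $E^{\chi_1}\oplus E^{\chi_2}\subset E^s_b$, the $\W^{\chi_1}$- and $\W^{\chi_2}$-holonomies inside $W^s_b$ would commute; smoothness of the fast subfoliation inside a stable leaf does not by itself force a local product structure of two subfoliations, and your proposed transfer from the pinching element $a$ to the TNS element $b$ is exactly the point you flag as an obstacle but do not resolve. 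In short, the reduction to Theorem~\ref{main toral case} is unjustified, whereas the paper's direct use of \cite{BM} followed by \cite{HW} sidesteps the issue entirely.
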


An immediate corollary of Theorem \ref{main Lyapunov pinching} is the following result, which can be viewed as a higher-rank, higher dimensional and smooth analogue of Newhouse's result in \cite{N} for topological rigidity of codimension-one Anosov diffeomorphism. 
\begin{coro}\label{coro: one LE} If there is an element $\al(a)$ such that it has only one Lyapunov exponent on its stable or unstable distribution, then $\al$ is smoothly conjugate to a $\ZZ^k-$action by affine automorphisms of an infranilmanifold. 
\end{coro}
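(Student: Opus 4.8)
The plan is to deduce Corollary \ref{coro: one LE} as a special case of Theorem \ref{main Lyapunov pinching} by showing that the stated hypothesis—existence of an element $\al(a)$ with a single Lyapunov exponent on one side—forces the Lyapunov pinching inequality to hold trivially. First I would fix such an element $\al(a)$ and, without loss of generality (replacing $a$ by $-a$ if necessary, which swaps the roles of stable and unstable distributions), assume that $\al(a)$ has exactly one Lyapunov exponent $\lambda$ on its stable distribution $E^s_a$. In particular $\lambda^s_-(a)=\lambda^s_+(a)=\lambda<0$, so the ratio $\lambda^s_-(a)/\lambda^s_+(a)=1$.

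Next I would plug this into the first Lyapunov pinching inequality
\[
1+\frac{\lambda^u_-(a)}{\lambda^u_+(a)}>\frac{\lambda^s_-(a)}{\lambda^s_+(a)}.
\]
The right-hand side equals $1$, so the inequality becomes $1+\lambda^u_-(a)/\lambda^u_+(a)>1$, i.e. $\lambda^u_-(a)/\lambda^u_+(a)>0$. Since both $\lambda^u_-(a)$ and $\lambda^u_+(a)$ are positive (they are Lyapunov exponents on the unstable distribution of the Anosov element $\al(a)$, hence strictly positive), their ratio is positive and the inequality holds automatically. Thus $\al$ is Lyapunov pinching, and Theorem \ref{main Lyapunov pinching} yields that $\al$ is smoothly conjugate to a $\ZZ^k$-action by affine automorphisms of an infranilmanifold. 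The symmetric case, where $\al(a)$ has a single exponent on its unstable distribution, is handled identically using the second inequality $1+\lambda^s_+(a)/\lambda^s_-(a)>\lambda^u_+(a)/\lambda^u_-(a)$, whose right-hand side becomes $1$ while the left-hand side is $1+$ (a positive quantity, noting $\lambda^s_\pm(a)$ have the same sign).

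The only genuine subtlety—and hence the place I would be most careful—is making sure the relevant Lyapunov exponents are well-defined independently of the choice of invariant measure, so that the phrase ``$\al$ is Lyapunov pinching'' is unambiguous and matches the hypothesis of Theorem \ref{main Lyapunov pinching}. This is precisely guaranteed by Theorem \ref{thm: osl spltt cont}(1): under the TNS totally Anosov assumption the Lyapunov exponents of (a finite cover of) $D\al$ do not depend on the ergodic invariant measure, so the condition ``$\al(a)$ has only one Lyapunov exponent on its stable or unstable distribution'' is a well-posed, measure-independent statement about the single Anosov diffeomorphism $\al(a)$, and the computation above applies verbatim. No further work is needed; the corollary is immediate once this consistency point is noted.
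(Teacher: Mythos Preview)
Your proof is correct and follows the same approach as the paper: verify the Lyapunov pinching inequality directly from the single-exponent hypothesis and then invoke Theorem~\ref{main Lyapunov pinching}. The paper's proof is terser and phrases the well-definedness step via the ``polynomial properties'' of Theorem~\ref{thm: osl spltt cont} on a finite cover (then descends via local isometry), whereas you cite part~(1) of that theorem directly and carry out the inequality check explicitly; these are the same argument with slightly different emphasis.
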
 

\begin{proof}If there is an element $a$ such that $\al(a)$ has only one Lyapunov exponent on its stable or unstable distribution, then by polynomial properties in Theorem \ref{thm: osl spltt cont} we know a finite cover of $\al$ is Lyapunov pinching. But since the covering map could be chosen as a local isometry, therefore $\al$ itself is Lyapunov pinching as well. Then Corollary \ref{coro: one LE} is a direct consequence of Theorem \ref{main Lyapunov pinching}.
\end{proof}


\section{Preliminaries}\label{sec: prel}
\subsection{Anosov $\RR^k-$actions and suspensions.}
An $\RR^k-$action $\al$  on a smooth manifold is Anosov if it has at least one element which has a hyperbolic splitting into stable and unstable distributions, transversally to the orbit distribution (which is tangent to the orbit foliation of $\al$). Any such element we will call an \emph{Anosov element} of the action $\al$. In particular an $\RR^k-$action is called \emph{totally Anosov} if the set of Anosov elements $\mathcal{A}$ is dense in $\RR^k$.

For a $\ZZ^k-$action $\al$ on a manifold $M$ there is an associated canonical $\RR^k-$action $\tilde{\al}$ on $\tM$ given by standard suspension construction.
\begin{eqnarray}
\tM:=\RR^k\times M/{\sim},&&\text{where } (t,x)\sim (t-n , \al(n),x), n\in \ZZ \\
\tal:\RR^k\times \tM\to \tM,&& \tal(t)\cdot(s,x)=(s+t,x)
\end{eqnarray}
We call $\tal$ is the \emph{suspension} of $\al$. In the following proposition we summarize several important properties for the suspension $\tal$ of a $\ZZ^k$ Anosov action $\al$, for the proof and more details see \cite{KS07}, \cite{KalSpa}.
\begin{prop}\label{prop: properties suspen}Let $\al$ be a smooth totally Anosov $\ZZ^k-$action on $M$. Then we have
\begin{enumerate}
\item The restriction of $\tal$ on $\ZZ^k$ preserves each transversal copy of $M$ in $\tM$, i.e. $$t\in \RR^k, (\{t\}+\ZZ^k)\times M/\sim~~ \subset \tM$$ 
\item Any $\al-$invariant measure on $M$ lifts to a unique $\tal-$invariant measure on $\tM$.
\item There is an $\tal-$invariant H\"older continuous coarse Lyapunov splitting 
\begin{equation}\label{eqn: coarse splt suspension}T\tM=T\O\oplus TM=T\O\oplus E^i
\end{equation}where $M$ here is identified with the transversal copy of $M$ in $\tM$ mentioned above, $T\O$ is the tangent distribution of the $\RR^k-$orbits of $\tal$ and $E^i$ are the finiest nontrivial intersections of the stable distributions of the Anosov elements of the actions. Each $E^i$ is tangent to a H\"older foliation $W^i$ with uniformly $C^\infty-$leaves.
\item The Lyapunov hyperplanes and the picture of Weyl chambers are the same for all $\tal-$invariant ergodic probability measures. The set $\A$ of Anosov elements for $\tal$ is the union of the Weyl chambers in $\RR^k$. In particular $\tal$ is totally Anosov. 
\item For any $\tal-$invariant ergodic probability measure, the coarse Lyapunov splitting coincides on the set of full measure with the splitting \eqref{eqn: coarse splt suspension}. Moreover, for each Lyapunov exponent $\chi$, 
$$E^{\chi}(p)=\bigcap_{a\in \A, \chi(a)<0}E^s_a(p)=\bigoplus_{\chi'=c\chi, c>0}E_{\chi'}(p)$$
\item Under the TNS condition, almost every element
of every Lyapunov hyperplane is transitive on $\tM$.
\end{enumerate}
\end{prop}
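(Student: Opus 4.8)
This proposition is essentially a compilation of facts established in \cite{KS07} and \cite{KalSpa}; the plan is to indicate how (1)--(6) follow, with the TNS hypothesis entering only in the last item. Parts (1) and (2) are pure bookkeeping for the suspension. For (1): if $n\in\ZZ^k$ and a point of $\tM$ is represented by $(s,x)$, then $\tal(n)\cdot(s,x)=(s+n,x)$ with $s+n$ in the same $\ZZ^k$-coset as $s$, so the transversal slice $(\{s\}+\ZZ^k)\times M/{\sim}$ is preserved, $\tal(n)$ acting on it as $\al(n)$. For (2): lift $\mu$ to $\mathrm{Leb}_{[0,1)^k}\times\mu$ on a fundamental domain and push forward to $\tM$; translation-invariance of Lebesgue gives $\tal$-invariance, and uniqueness follows since the restriction of a $\tal$-invariant measure to a transversal is an $\al$-invariant measure on $M$ and the whole measure is recovered by flowing.

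For (3)--(5) I would invoke the Kalinin--Spatzier structure theory. First, $\tM$ fibers over $\TT^k$ with fibre $M$, so the orbit distribution $T\O$ (a complement of the fibre distribution, which is identified with $TM$ along a transversal) gives the splitting $T\tM=T\O\oplus TM$. Since $\al$ is totally Anosov, the Anosov elements of $\tal$ are dense; by \cite{KalSpa} the set $\A$ of such elements is open, and its complement is a finite union of hyperplanes through $0$ (the Lyapunov hyperplanes) depending on the action alone -- this gives the measure-independence in (4) and identifies $\A$ with the union of Weyl chambers. Defining $E^i$ as the minimal nontrivial intersections of the stable distributions $E^s_a$, $a\in\A$, the standard persistence/graph-transform argument for intersections of stable foliations of commuting partially hyperbolic diffeomorphisms (\cite{KalSpa}, \cite{KS07}) shows each $E^i$ is H\"older, defined everywhere, integrable to a H\"older foliation $W^i$ with uniformly $C^\infty$ leaves, and $T\tM=T\O\oplus\bigoplus_iE^i$; for a fixed ergodic $\mu$ the Oseledec splitting refines this on a full-measure set. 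The displayed formula in (5) is then bookkeeping: from $E^s_a=\bigoplus_{\chi'(a)<0}E_{\chi'}$ a.e., intersecting over all regular $a$ with $\chi(a)<0$ keeps exactly the $E_{\chi'}$ with $\chi'$ a positive multiple of $\chi$, since any $\chi'$ not proportional to $\chi$ is positive somewhere on the Weyl-chamber-dense half-space $\{\chi<0\}$.

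Part (6) is where real work is needed and where TNS is essential. By Lemma \ref{lemma: elem trans} every Anosov element of $\al$ is transitive on $M$, and by (2) the fully supported $\al$-invariant measure guaranteed by TNS lifts to a fully supported $\tal$-invariant measure on $\tM$; in particular the non-wandering set of every $\tal(a)$ is all of $\tM$. For $a$ in a Lyapunov hyperplane $\ker\chi$ lying on no other Lyapunov hyperplane -- Lebesgue-almost every element of $\ker\chi$ -- the map $\tal(a)$ is partially hyperbolic with center $T\O\oplus E^\chi$, stable distribution $\bigoplus_{\psi(a)<0}E^\psi$ and unstable distribution $\bigoplus_{\psi(a)>0}E^\psi$. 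To pass from the non-wandering statement to transitivity I would invoke the argument of \cite{KalSpa}: under TNS any two coarse distributions lie jointly inside the stable distribution of some regular $\tal(b)$, whose stable foliation is minimal (a consequence of the transitivity of the Anosov elements of $\al$, cf.\ Lemma \ref{lemma: elem trans}); letting $b$ vary so that these foliations together cover all coarse directions transverse to $T\O\oplus E^\chi$ forces the stable and unstable foliations of $\tal(a)$ to generate a minimal partition, whence $\tal(a)$ is transitive.

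The main obstacle is precisely this last step: for a merely partially hyperbolic diffeomorphism, ``non-wandering set equal to the whole manifold'' does not imply topological transitivity, and here the implication genuinely rests on the minimality of the stable foliations of the transitive Anosov elements together with the TNS combinatorics that reassembles those foliations from coarse pieces. Everything else in the statement is either formal suspension bookkeeping or a citation to the established coarse-Lyapunov theory.
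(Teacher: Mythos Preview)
The paper does not prove this proposition; it states it and defers entirely to \cite{KS07} and \cite{KalSpa}. Your treatment of (1)--(5) is correct and in line with how those references handle the suspension bookkeeping and the coarse Lyapunov structure theory.

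For (6) there is a genuine gap in your outline. You claim the stable foliation of a regular Anosov element $\tal(b)$ is minimal on $\tM$, but this is false: the orbit direction $T\O$ is central for every element of the action, so the strong stable leaves of $\tal(b)$ lie inside the transversal $M$-slices and cannot be dense in $\tM$. What Lemma \ref{lemma: elem trans} actually gives is minimality of $W^s_{\al(b)}$ on each slice $M$, not on $\tM$. The argument in the references must therefore also use that for Lebesgue-a.e.\ $a$ in the Lyapunov hyperplane the line $\{ta\bmod\ZZ^k\}$ is dense in the base torus $\TT^k$, so that the $\tal(ta)$-orbit visits every slice; the slice-wise minimality is then combined with this to produce transitivity on $\tM$. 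TNS enters cleanly at a different point than you indicate: it guarantees that for generic $a\in\ker\chi$ the only neutral coarse Lyapunov distribution is $E^\chi$ itself, so the strong stable and strong unstable distributions of $\tal(a)$ coincide \emph{exactly} with the full stable and unstable distributions of Anosov elements in the two adjacent Weyl chambers, rather than needing to be reassembled from several $b$'s as you suggest. You were right to flag (6) as the substantive step and to name its ingredients, but the mechanism as you wrote it does not go through on $\tM$.
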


\subsection{Foliation's \textit{holonomies} and \textit{Holonomies} of fibered-bunched cocycles}\label{sec: holo HOLO}
In this paper we consider two different classes of  maps both are all called \emph{holonomy} in the previous work. To distinguish them we will use the words ``holonomy" for foliation's holonomies and ``Holonomy" for the one induced by fibered-bunched cocycles. 

Suppose that there are two transeverse $F_1,F_2$ in an $n-$dimensional manifold $M$ where $\dim F_1=k, \dim F_2=n-k$. Then for two $F_1-$leaves $F_1(x), F_1(y)$ close enough, we could define the holonomy map $h: F_1(x)\to F_1(y)$, for any $z$ in the local leaf of $F_1(x)$, $h(z)$ is the unique intersection of the local leaf of $F_2(z)$ with $F_1(y)$. 

Now we consider the \textit{Holonomies} induced by fiber bunched cocycles. For details cf. \cite{ASV} \cite{KS13} and references therein. Recall that a $C^1-$diffeomorphism $f$ on a compact smooth manifold $M$ is called partially hyperbolic if there is a $Df-$invariant splitting $E^s\oplus E^c\oplus E^u $ of $TM$ such that there exists $k>1$ such that for any $x\in M$ and any choice of unit vectors $v^s\in E^s$, $v^c\in E^c$, $v^u\in E^u$, $$\|Df^k(v^s)\|<1<\|Df^k(v^u)\|$$
$$ \|Df^k(v^s)\|<\|Df^k(v^c)\|<\|Df^k(v^u)\|$$
If $E^c$ is trivial then  $f$ is Anosov. \footnote{In general we could assume $k$ in the definition of partial hyperbolicity to be $1$ by considering Lyapunov metrics.}Suppose $f:M\to M$ is a partially hyperbolic diffeomorphism on a compact manifold $M$. Then we could pick a continuous function $\nu<1$ such that  for any unit vector $v\in E^s_x$, 
\begin{equation}\label{eqn: cont rate}
\|Df(x)\cdot v\|<\nu(x)
\end{equation}
$P : E\to M$ is a finite dimensional $\beta-$H\"older vector bundle over $M$; and 
$F : E\to E$ is a $\beta-$H\"older linear cocycle over $f$ (i.e. $F_x$ $\beta-$H\"older continuously depends on $x$). $F$ is called
\textit{fiber bunched} (over $W^s(f)$) if for some $\beta-$H\"older norm on $E$,
\begin{equation}\label{eqn: f bunch def}
\|F(x)\|\cdot \|F(x)^{-1}\|\cdot \nu(x)^\beta<1
\end{equation}
for all $x$ in $M$. Similarly we could define cocycle fiber bunched over unstable manifolds of a partially hyperbolic diffeomorphism. In section \ref{sec:poly-deviation} we will meet some cocycles which is fiber-bunched on $W^s$ but not on $W^u$. One of the key problems for this paper is to describe the dynamics of them.

\begin{definition}The stable Holonomy for $F : E\to E$ defined above is a continuous
map $H^s:(x,y)\to H^s_{xy}$ where $x\in M$, $y\in W^s(x)$ such that
\begin{enumerate}
\item $H^s_{xy}$ is a linear map from $E_x$ to $E_y$.
\item $H^s_{xx}=\mathrm{id}$ and $H^{s}_{yz}\circ H^s_{xy}=H^s_{xz}$.
\item $H^s_{xy} = (F^n_y)^{−1} \circ H^s_{f^nx f^ny}
\circ F^n_x$ for all $n\in N$.
\end{enumerate}
The unstable Holonomy for $F$ could be defined similarly.
\end{definition}
The following proposition lists some important properties of Holonomies of fiber-bunched cocycles. For $x,y\in M$ two nearby points we let $I_{xy} : E_x\to E_y$ be a
linear identification which is $\beta-$H\"older close to the identity.
\begin{prop}\label{prop: properties holonomies}[Proposition 4.2 \cite{KS13}] Suppose that the cocycle $F$ is fiber bunched. Then there exists $C > 0$ such that for any $x\in M$ and $y\in W^s_{loc}(x)$,
\begin{itemize}
\item[a.] $\|(F^n_y)^{−1}\circ I_{f^nx f^ny}\circ  F^n_x − I_{xy}\| ≤ C\d(x, y)^\beta$
for every $n\geq 0$;
\item[b.] $H^s_{xy} = \lim_{n\to \infty}(F^n_y)^{−1}\circ I_{f^nxf^ny}\circ F^n_x$
satisfies (1), (2), (3) in definition of Holonomies and
$$\textbf{(4).}~~~~~\|H^s_{xy} − I_{xy}\| ≤ C\d(x, y)^\beta$$
\item[c.] The stable holonomy satisfying (iv) is unique.
\end{itemize}
$H^s_{xy}$ can be extended to any $y\in W^s(x)$ using (3). Similar properties also hold for unstable Holonomy.
\end{prop}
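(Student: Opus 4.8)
The plan is to run the classical telescoping construction for fiber-bunched linear cocycles (as in \cite{KS13, ASV}). To prove (a) I would write $(F^n_y)^{-1}\circ I_{f^nx,f^ny}\circ F^n_x - I_{xy}$ as the telescoping sum $\sum_{k=0}^{n-1}T_k$, where $T_k=(F^k_y)^{-1}\circ\big[(F_{f^ky})^{-1}\circ I_{f^{k+1}x,f^{k+1}y}\circ F_{f^kx}-I_{f^kx,f^ky}\big]\circ F^k_x$. Since $F$ is $\beta$-H\"older, $I_{zw}$ is $\beta$-H\"older close to the identity, and $f$ contracts $W^s$ at rate $\nu$ (from \eqref{eqn: cont rate}), so that $d(f^jx,f^jy)\le C\big(\prod_{i<j}\nu(f^ix)\big)d(x,y)$, the bracket is $O\big(\|(F_{f^ky})^{-1}\|\,(1+\|F_{f^kx}\|)\,d(f^kx,f^ky)^\beta\big)$; because $F$ and $F^{-1}$ are uniformly bounded on the compact $M$, this becomes $\|T_k\|\le C\,\|(F^k_y)^{-1}\|\,\|F^k_x\|\,d(f^kx,f^ky)^\beta$.

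The crux — and really the only non-bookkeeping step — is the fiber-bunching hypothesis \eqref{eqn: f bunch def}: by compactness there is $\theta<1$ with $\|F_z\|\,\|F_z^{-1}\|\,\nu(z)^\beta\le\theta$ everywhere. Since $y\in W^s_{loc}(x)$, the forward orbit of $y$ shadows that of $x$ with geometrically summable errors, so by H\"older continuity of $F^{-1}$ one gets $\|(F^k_y)^{-1}\|\le C\|(F^k_x)^{-1}\|$ uniformly, and submultiplicativity then yields $\|(F^k_y)^{-1}\|\,\|F^k_x\|\le C\prod_{j<k}\|(F_{f^jx})^{-1}\|\,\|F_{f^jx}\|\le C\theta^k\prod_{j<k}\nu(f^jx)^{-\beta}$. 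Multiplying this by $d(f^kx,f^ky)^\beta\le C\big(\prod_{j<k}\nu(f^jx)\big)^\beta d(x,y)^\beta$, the factors $\prod\nu(f^jx)^{\pm\beta}$ cancel exactly, so $\|T_k\|\le C\theta^k d(x,y)^\beta$; summing the geometric series gives (a) with a bound uniform in $n$. Thus fiber bunching is precisely the inequality forcing the contraction of the base and the growth of the cocycle to balance out to a ratio $\theta<1$; everything else is routine. Uniformity of the tail bound shows the partial sums are Cauchy, so the limit in (b) exists; letting $n\to\infty$ in (a) yields \textbf{(4)}, and continuity of $H^s$ follows from uniform convergence together with continuity of $F$ and of $x\mapsto I_{xy}$.

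For the remaining claims in (b): (1) holds as a limit of linear maps; $H^s_{xx}=\mathrm{id}$ because $I_{xx}=\mathrm{id}$ makes every approximant the identity; the equivariance (3) is a re-indexing of the defining limit using $(F^n_y)^{-1}(F^m_{f^ny})^{-1}=(F^{n+m}_y)^{-1}$ and $F^m_{f^nx}F^n_x=F^{n+m}_x$; and the cocycle relation (2) follows from the limit formula once one observes that $I_{f^ny,f^nz}\circ I_{f^nx,f^ny}$ and $I_{f^nx,f^nz}$ differ by $O(d(f^nx,f^nz)^\beta)$, an error annihilated in the limit by the same $\theta^n$ estimate. For (c), if $\tilde H^s$ satisfies (1)--(3) and \textbf{(4)}, then (3) gives $H^s_{xy}-\tilde H^s_{xy}=(F^n_y)^{-1}\big(H^s_{f^nx,f^ny}-\tilde H^s_{f^nx,f^ny}\big)F^n_x$ and \textbf{(4)} bounds the central factor by $2Cd(f^nx,f^ny)^\beta$, so the fiber-bunching estimate again forces $H^s_{xy}=\tilde H^s_{xy}$. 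Finally, the extension to arbitrary $y\in W^s(x)$ is obtained by setting $H^s_{xy}:=(F^n_y)^{-1}H^s_{f^nx,f^ny}F^n_x$ for $n$ large enough that $f^ny\in W^s_{loc}(f^nx)$, with independence of $n$ checked via (3); the unstable holonomy statement is the stable one applied to $f^{-1}$, which is fiber-bunched over $W^u=W^s(f^{-1})$.
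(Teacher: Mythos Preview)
Your proposal is correct and reproduces the standard telescoping argument for fiber-bunched cocycles exactly as in \cite{KS13, ASV}. Note that the paper itself does not prove this proposition at all---it is stated with attribution ``[Proposition 4.2 \cite{KS13}]'' and invoked as a known result, so there is no in-paper proof to compare against; your write-up is essentially the proof one finds in the cited reference.
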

\begin{remark}In section \ref{sec: non-uni coc Holo} we will meet the \textit{non-uniform Holonomy} induced by \textit{non-uniformly} fiber-bunched cocycle which is useful in the proof of Theorem \ref{thm: osl spltt cont}. 
\end{remark}

\section{Subexponential growth for elements on a Lyapunov hyperplane}\label{section: uniform estimate}
In this section we prove a useful subexponential growth estimates for elements on a Lyapunov hyperplane, which generalize Proposition 4.2 in \cite{FKS1} to general compact manifolds. As in section \ref{sec: prel},  $\al$ is always to be a TNS totally Anosov $\ZZ^k$ action on $M$, and $\tilde \al$ is the suspension action of $\al$.

\begin{lemma}\label{lemma:sub exp grwth on plane}
Let $L\subset \RR^k$ be a Lyapunov hyperplane and $E$ be the corresponding coarse Lyapunov distribution for $\tal$. For any $\epsilon>0, b\in L$, there exist $C >0$ such that for any $n>0$ and any unit vector $v\in E$,
\begin{equation}\label{eqn: sub exp grwth}
C^{-1}e^{-\epsilon n}\leq \|D(\tal(nb))\cdot v\|\leq Ce^{\epsilon n}
\end{equation}
\end{lemma}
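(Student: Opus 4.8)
The plan is to argue by contradiction using the multiplicative ergodic theorem together with the TNS condition and the coincidence of the Weyl-chamber picture across all invariant measures. Suppose the upper bound in \eqref{eqn: sub exp grwth} fails for some $b\in L$, some $\epsilon>0$ and the coarse Lyapunov distribution $E$. Then there are unit vectors $v_n\in E$ (at points $x_n\in \tM$) and integers $n_j\to\infty$ with $\|D(\tal(n_j b))\cdot v_{n_j}\|\geq e^{\epsilon n_j}$. First I would pass to an accumulation point: using subadditivity of $\log\|D(\tal(nb))\|$ along the $\ZZ$-semigroup generated by $b$ and compactness of $\tM$, produce an $\tal(b)$-invariant ergodic probability measure $\mu$ on $\tM$ for which the top Lyapunov exponent of the restriction $D\tal(b)|_E$ is at least $\epsilon>0$; the standard way to do this is to take weak-$*$ limits of empirical measures along orbit segments realizing the growth, invoking the variational-principle-style characterization of the top Lyapunov exponent as a supremum over invariant measures of $\int \log\|\cdot\|\,d\mu$ (Kingman / Furstenberg--Kesten). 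Symmetrically, if the lower bound fails one gets an invariant measure with a Lyapunov exponent of $D\tal(b)|_E$ at most $-\epsilon<0$.

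Next I would derive a contradiction with the fact that $b\in L$ lies on a Lyapunov hyperplane. By Proposition \ref{prop: properties suspen}(4)--(5), the Lyapunov hyperplanes and Weyl chambers of $\tal$ are the \emph{same} for every $\tal$-invariant ergodic measure, and for each such measure the coarse Lyapunov distribution $E$ associated to $L$ decomposes as $\bigoplus_{\chi'=c\chi,\,c>0}E_{\chi'}$ where $\chi$ is a Lyapunov functional with $\ker\chi=L$. Hence for the measure $\mu$ produced above, every Lyapunov exponent of $D\tal(b)|_E$ has the form $\chi'(b)=c\,\chi(b)=0$ with $c>0$, i.e. \emph{all} Lyapunov exponents of $D\tal(b)$ along $E$ vanish. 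But by Oseledec's theorem the top (resp. bottom) Lyapunov exponent of $D\tal(b)|_E$ with respect to $\mu$ equals $\lim \frac1n\int \log\|D\tal(nb)|_E\|\,d\mu$ (resp. the analogous liminf with the conorm), which we have just shown is $\geq\epsilon$ (resp. $\leq-\epsilon$) --- a contradiction. Therefore both inequalities in \eqref{eqn: sub exp grwth} hold for each fixed $b$ and $\epsilon$, with the constant $C=C(b,\epsilon)$ obtained from uniform boundedness once the exponential rates are controlled.

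The main obstacle I anticipate is the passage from the pointwise/subsequential growth failure to an \emph{invariant measure} realizing a nonzero exponent on $E$ --- i.e. making rigorous the "non-uniform estimate $\Rightarrow$ bad invariant measure" step. One has to be careful that the limiting measure is supported on $\tM$ (not escaping, which is fine by compactness), that it is $\tal(b)$-invariant, that one may pass to an ergodic component without losing the exponent bound, and that the vectors $v_n\in E$ are genuinely realizing the top exponent of the \emph{restricted} cocycle $D\tal|_E$ rather than picking up growth from a complementary direction --- here the fact that $E$ is a continuous $D\tal$-invariant subbundle (Proposition \ref{prop: properties suspen}(3)) is essential, so the restriction $D\tal|_E$ is a well-defined continuous linear cocycle and Kingman's theorem applies to $\log\|D\tal(nb)|_E\|$ and to $\log\|(D\tal(nb)|_E)^{-1}\|^{-1}$ directly. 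Once this is set up the rest is the soft ergodic-theoretic argument sketched above.
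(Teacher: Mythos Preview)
There is a genuine gap in your argument. The measure $\mu$ you construct by taking weak-$*$ limits of empirical measures along the $\tal(b)$-orbit is only $\tal(b)$-invariant, not invariant under the full $\RR^k$-action $\tal$. Proposition~\ref{prop: properties suspen}(4)--(5), which you invoke to conclude that every Lyapunov exponent of $D\tal(b)|_E$ with respect to $\mu$ has the form $c\chi(b)=0$, is stated for \emph{$\tal$-invariant} ergodic measures; it says nothing about measures invariant only under a single (non-Anosov, since $b\in L$) element. There is no obvious way to average $\mu$ to an $\RR^k$-invariant measure while preserving the lower bound $\lambda^+_E(Db,\mu)\geq\epsilon$: the top exponent is only upper semicontinuous in the measure, and averaging over a noncompact group loses control.

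The paper resolves exactly this difficulty by lifting to the projective bundle $\PP E$ and working with the affine action $\PP(D\tal)_*$ on measures there. The key observation is that for any $\PP(Db)$-invariant measure $\nu$ on $\PP E$, the value of $\int_{\PP E}\ln\|Db\|\,d\nu$ is unchanged when $\nu$ is replaced by $\PP(Dc)_*\nu$ for any $c\in\RR^k$ (a cocycle computation using commutativity). Hence the convex compact set $C=\{\nu:\int\ln\|Db\|\,d\nu\geq\epsilon\}$ is $\PP(D\tal)_*$-invariant, and Markov--Kakutani yields a common fixed point $\nu_0$. One then perturbs $b$ to a nearby regular Anosov element $b'$ with $E\subset E^s_{b'}$: continuity gives $\int\ln\|Db'\|\,d\nu_0>\epsilon/2$, while uniform contraction of $E$ by $b'$ forces $\int\ln\|Db'\|\,d\nu_0<0$, a contradiction. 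Finally, the passage from ``zero exponents for every invariant measure'' to the uniform bound \eqref{eqn: sub exp grwth} is not automatic from ``uniform boundedness''; the paper uses a subadditive lemma (Lemma~\ref{lemma: subadd}) to extract a uniform $N$ with $\|D(Nb)|_E\|<e^{\epsilon N}$ everywhere.
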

\begin{proof}In the proof we abbreviate $\tal(b)$ to $b$. Firstly we claim that for any $b\in L$ any $b-$invariant ergodic measure $\mu$ on $\tM$, for the cocycle $Db$ we have 
\begin{equation}\label{eqn: vanished LE on L}
\lambda^+_E(Db, \mu)=\lambda^-_E(Db, \mu)=0
\end{equation}
where $\lambda^\pm_E$ are the top and bottom Lyapunov exponents for the derivative cocycle restricted on $E$. Without loss of  generality we only need to prove that $\lambda^+_E(Db,\mu)\leq 0$ (replace $b$ by $-b$ we get $\lambda^-_E(Db,\mu)\geq 0$). 

Suppose $\lambda^+_E(Db,\mu)>\epsilon>0$, we first consider the case where the top Lypaunov exponent $\lambda^+_E(Db,\mu)$ of the cocycle $(Db,\mu)$ is simple. Consider the projective bundle $\PP E$ of $E$ over $\tM$ and the projective action $\PP (Db|_E)$ of $Db|_E$ on $\PP E$. Then by Oseledec theorem, the one-dimensional Oseledec space $V_1$  within $E$ corresponding to the top Lyapunov exponent of cocycle $(Db|_E, \mu)$ induces a $\mu-a.e.-$defined $\PP (Db|_E)-$invariant measurable section $ \PP(V_1(x))$ of $\PP E$. 

Obviously the graph of the section $\{(x, \PP(V_1(x))\}$ is a measurable set in $\PP E$ (see \cite{V14}) and we can define a Borel  probability measure $\mu^{V_1}$ on $\PP E$ as the following: for all measurable set $U\subset \PP E$
$$\mu^{V_1}(U):= \mu (\pi(U\cap \{(x, \PP(V_1(x))\}))$$
where $\pi$ is the canonical projection from $\PP E$ to $\tM$.

For any $\bar v\in \PP E, c\in \RR^k$, we define $$\ln\|Dc\cdot \bar v\|:=\frac{\ln\|Dc\cdot v\|}{\|v\|}, \text{ where } v\in E-\{0\} \text{ and } v \text{ is in the class } \bar v$$
Obviously $\ln\|Dc\cdot \bar v\|$ does not depend on the choice of $v$, moreover it is an additive cocycle in the sense that for any $c_1,c_2\in \RR^k, \bar v\in \PP E$ we have 
\begin{equation}\label{eqn: cocycle derivative}
\ln\|D(c_1+c_2)\cdot \bar v\|=\ln \|Dc_2\cdot (\PP Dc_1\cdot \bar v)\|+\ln \|Dc_2\cdot \bar v\|
\end{equation}
In summary, $\ln \|Dc\|$ is a well-defined function on $\PP E$. Now we consider the set $C$ formed by all $\PP(Db)-$invariant Borel probability measure $\nu$ of $\PP E$ such that $$(\ln \|Db\|, \nu):=\int_{\PP(E)}\ln\|Db\|d\nu \geq \epsilon$$
By our assumption on $\mu^{V_1}$, we have $\mu^{V_1}\in C$ hence $C$ is non-empty, moreover we have the following
\begin{lemma}\label{lemma: cmmn fxd pt}There is a $\PP(D\tal)_\ast-$common fixed point in $C$.
\end{lemma}
\begin{proof}
It is easy to check $C$ is a compact convex subset in the dual of $C^0(\PP E)$. Moreover for any $c\in \RR^k, \nu_1, \nu_2\in C, t\in [0,1]$, $$\PP( Dc)_\ast (t\nu_1+(1-t)\nu_2)=t\PP(Dc)_\ast \nu_1+(1-t)\PP(Dc)_\ast\nu_2 $$
Moreover, for any $c\in \RR^k$, any  $\PP(Db)-$invariant Borel probability measure $\nu$ of $\PP E$
\begin{eqnarray*}
&&(\ln\|Db\|, ~~\PP(Dc)_\ast\nu)\\
&=&\int_{\PP E} \ln\|Db\cdot (\frac{Dc\cdot v}{\|Dc\cdot v\|})\|d\nu(\bar v), ~~\text{$v$ is arbitrary unit vector in $\bar v$}\\
&=&\int_{\PP E} \ln\|D(c+b)\cdot v\|-\ln\|Dc\cdot v\|d\nu(\bar v)\\
&=&\int_{\PP E} \ln\|Dc\cdot (\PP Db\cdot \bar v)\|+\ln\|Db\cdot \bar v\|-\ln\|Dc\cdot \bar v\|d\nu(\bar v)~~(\text{by \eqref{eqn: cocycle derivative}})\\
&=&(\ln\|Dc\|, \PP(Db)_\ast \nu)+(\ln\|Db\|, \nu)-(\ln \|Dc\|, \nu )\\
&=&(\ln\|Db\|, \nu)~~(\text{since $\nu$ is $\PP(D\al)_\ast-$invariant})
\end{eqnarray*}
Therefore $C$ is $\PP(D\tal)_\ast-$invariant. Then by Markov–Kakutani fixed-point theorem  (that is:  a commuting family of continuous affine self-mappings of a compact convex subset in a locally convex topological vector space has a common fixed point, see \cite{RS} page 152.), there exists a $\PP(D\tal)_\ast-$common fixed point in $C$. \end{proof} 
As a result of Lemma \ref{lemma: cmmn fxd pt}, there exists a $\PP(D\tal)_\ast-$ invariant measure $\nu_0$ on $\PP E$ such that $(\ln \|Db\|, \nu_0)\geq \epsilon$, now we pick a $b'$ close to $b$ such that $b'$ is the negative Lyapunov half space corresponding to $L$. By smoothness of $\al$ we can choose $b'$ such that $$|\ln\|Db\|-\ln\|Db'\||<\frac{1}{2}\epsilon$$Therefore by our choice of $\nu_0$ we have 
\begin{equation}\label{eqn: closeness b'}
(\ln\|Db'\|, \nu_0)>\frac{1}{2}\epsilon
\end{equation}
But since $b'\notin L$ and is close enough to $b$, without loss of generality we could further assume that $b'$ is regular, i.e. does not lie in any Lyapunov hyperplane. Then by the totally Anosov  assumption on $\al$ and Proposition \ref{prop: properties suspen}, $b'$ is an Anosov element and $E\subset E^s_{b'}$. Therefore there exists $n=n(b')\in \ZZ^+$ such that $\|D(nb')|_E\|<1$. As a result, by $D\tal-$invariance of $\nu_0$ we have 
\begin{eqnarray*}
(\ln\|Db'\|, \nu_0)&=&(\ln\|Db'\|, \PP(Db')_\ast\nu_0)\\
&=&(\ln\|Db'\|, \PP(D(2b')_\ast\nu_0)\\
&=&\cdots\\
&=&(\ln\|Db'\|, \PP(D(n-1)b')_\ast\nu_0)\\
&=&\frac{1}{n}\sum_{k=0}^{n-1}(\ln\|Db'\|, \PP(Dkb')_\ast\nu_0)\\
&=&\frac{1}{n}(\ln \|D(nb')\|,\nu_0)~~(\text{by \eqref{eqn: cocycle derivative}})\\
&<&0~~(\text{since }\|D(nb')|_E\|<1)
\end{eqnarray*}
which contradicts  \eqref{eqn: closeness b'}.

In summary, if $\lambda^+_E(Db,\mu)$ is the simple top Lypaunov exponent then $\lambda^+_E(Db,\mu)>0$ cannot hold. For the case that the $k-$th and $(k+1)-$th Lyapunov exponents of cocycle $(Db|_E, \mu)$ are distinct, by considering the projective bundle  $\PP(\bigwedge^k(E))$ and by mimicking  the proof above, we can show that $\lambda^+_E(Db,\mu)\leq 0$. Therefore we prove that for any $b\in L$ and for any $b-$invariant measure $\mu$, \eqref{eqn: vanished LE on L} holds.

Now the proof of Lemma \ref{lemma:sub exp grwth on plane} is essentially  the same as section 4 of \cite{K11}, for completeness we give an outline here. We need the following lemma:
\begin{lemma}[cf. \cite{H07}, Prop 3.4.] \label{lemma: subadd} Suppose $f$ is a continuous map of a compact metric space $X$ and $a_n: X\to \RR, n\geq 0$ is a sequence of continuous functions such that 
\begin{equation}\label{eqn: 1. lemma H07}
a_{n+k}(x)\leq a_n(f^k(x))+a_k(x)~~\text{for every } x\in X, n,k\geq 0
\end{equation}
If there is a sequence of continuous functions $b_n: X\to \RR, n\geq 0$ such that 
\begin{equation}\label{eqn: 2. lemma H07}
a_n(x)\leq a_n(f^k(x))+a_k(x)+b_k(f^n(x))~~\text{for every } x\in X, n,k\geq 0
\end{equation}
and $\inf_{n}(\frac{1}{n}\int_Xa_nd\mu)<0$ for every ergodic $f-$invariant measure $\mu$, then there is $N\geq 0$ such that $a_N(x)<0$ for every $x\in X$.  
\end{lemma}

Now we fix $b\in L$ and take $\epsilon>0$ arbitrary small. Let $a_n$ be $\log\|D(nb)|_E\|-n\epsilon$, then by Kingman subadditive ergodic Theorem \cite{K68} and equality  \eqref{eqn: vanished LE on L}, we have for any $b-$invariant measure $\mu$ on $M$, $\mu-a.e. x\in M$, $$\inf_{n}\frac{1}{n}\int_X a_nd\mu=\lim_{n\to \infty}\frac{1}{n}a_n(x)=\lambda^+_E(Db,\mu)-\epsilon<0$$ 
Then $a_n, n>0$ satisfy \eqref{eqn: 1. lemma H07}. Let $b_n$ be $\log\|D(-nb)|_E\|$. It is easy to see that $a_n, b_n$ satisfy \eqref{eqn: 2. lemma H07}. Applying Lemma \ref{lemma: subadd} to $a_n, n>0$, we have that for any $\epsilon>0$ there exists $N_\epsilon>0$ such that $a_{N_\epsilon}<0$, i.e. $$\|D(N_\epsilon b)|_E\|<e^{\epsilon N_\epsilon}.$$ This easily implies the right hand side of \eqref{eqn: sub exp grwth}. In a completely similar way we get the left hand side of \eqref{eqn: sub exp grwth}.
\end{proof}

\section{A specific splitting}\label{chap: spltt}
In this section we fix a Lyapunov hyperplane $L\subset \RR^n$ for $\al$ and denote by $E$ and $W$ the corresponding coarse Lyapunov distribution and foliation for $\al$ on $M$. We will establish smoothness of holonomies along certain stable leaves for some action elements between leaves of $W$, by using the estimates from section \ref{section: uniform estimate}. More precisely, 
\begin{prop}\label{prop: E12 splt}
There exist $D\al-$invariant distributions $E_1, E_2$ and Anosov elements $c_i\in \ZZ^k, i=1,\dots, 4$ such that
\begin{enumerate}
\item $TM=E_1\oplus E\oplus E_2$.
\item $E_1=E^s_{c_3}, E_2=E^s_{c_4}, E\oplus E_1=E^s_{c_1}, E\oplus E_2=E^s_{c_2}$. The distributions $E_1, E_2, E\oplus E_1, E\oplus E_2$ are tangent to $\al-$invariant foliations which we denote by $W_1, W_2, W\oplus W_1, W\oplus W_2$ respectively.
\item  For $i=1,2$, $c_i$ contracts $W_i$ faster than it does $W$, i.e.
\begin{equation}\label{eqn: contract faster}
\|D\al(c_i)|_{E_i}\|<\|D\al(-c_i)|_E\|^{-1}\leq \|D\al(c_i)|_E\|<1, i=1,2.
\end{equation}
\item The H\"older foliations $W, W_1, W_2, W\oplus W_1, W\oplus W_2$ have uniformly smooth leaves. The holonomies along $W_i$ between leaves of $W$ are smooth. Each $W_i$ is a smooth subfoliation within $W\oplus W_i$, $i=1,2$. 

\item In addition, for any given $\beta>0$, $c_i, i=1,\dots, 4$ can be chosen to satisfy the estimate:
\begin{equation}\label{eqn:FiberBu}
\|D\al(c_i)|_{E}\|\cdot \|D\al(-c_i)|_{E}\|\cdot \|D\al(c_i)|_{E_j}\|^\beta<1,~~ \text{where } 2| i-j.
\end{equation}
\end{enumerate}
\end{prop}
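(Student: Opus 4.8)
The plan is to build $E_1,E_2$ and $c_1,\dots,c_4$ in three stages: an elementary step in the Weyl chamber picture giving (1), (2) and the assertion in (4) that the relevant foliations have uniformly $C^\infty$ leaves; an application of the subexponential estimate of Section \ref{section: uniform estimate} giving the two inequalities \eqref{eqn: contract faster} and \eqref{eqn:FiberBu}; and the standard domination criterion for smoothness of holonomies giving the rest of (4). Throughout, write $E=E^\chi$, so $L=\ker\chi$, and recall from Proposition \ref{prop: properties suspen} and \cite{KalSpa} that the coarse Lyapunov distributions of $\al$ are defined everywhere, are H\"older with uniformly $C^\infty$ leaves, and that for a regular element $a$ one has $E^s_a=\bigoplus_{\psi(a)<0}E^\psi$, the sum over the coarse functionals negative on $a$; moreover, by the TNS hypothesis $\chi$ is the only coarse functional vanishing on $L$.

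For the first stage, choose $b_0\in L$ generic, meaning $\psi(b_0)\neq0$ for every coarse functional $\psi\neq\chi$, and $v\in\RR^k$ with $\chi(v)<0$. For all small $\epsilon>0$ the four regular vectors $b_0\pm\epsilon v$ and $-b_0\pm\epsilon v$ lie in nonempty open Weyl chambers. Put $\Psi_1=\{\psi\neq\chi:\psi(b_0)<0\}$, $\Psi_2=\{\psi\neq\chi:\psi(b_0)>0\}$ and $E_i=\bigoplus_{\psi\in\Psi_i}E^\psi$, so $TM=E_1\oplus E\oplus E_2$, proving (1). Comparing the signs of the coarse functionals at the four vectors, any lattice elements $c_1,c_3$ in the chambers of $b_0+\epsilon v$, $b_0-\epsilon v$ and $c_2,c_4$ in the chambers of $-b_0+\epsilon v$, $-b_0-\epsilon v$ satisfy $E^s_{c_1}=E\oplus E_1$, $E^s_{c_2}=E\oplus E_2$, $E^s_{c_3}=E_1$, $E^s_{c_4}=E_2$; this is (2), the foliations being the stable foliations of these Anosov elements, which are $\al$-invariant by commutativity, H\"older, and have uniformly $C^\infty$ leaves. (If a generic $b_0$ forces $E_1$ or $E_2$ to vanish, then $\al$ has only two coarse distributions and the statements about that summand are vacuous.)

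For the second stage, note that each $c_i$ may be taken close in direction to $L$: we may take $c_i$ of the form $N(b_0\pm\epsilon v)+O(1)$ with $N$ arbitrarily large. Since $\chi(b_0)=0$, Lemma \ref{lemma:sub exp grwth on plane} gives $\|D\al(\pm Nb_0)|_E\|\le C_{\epsilon'}e^{\epsilon'N}$ for every $\epsilon'>0$; hence, by submultiplicativity and the $D\al$-invariance of $E$, up to bounded factors $\|D\al(c_i)|_E\|\cdot\|D\al(-c_i)|_E\|\le e^{(O(\epsilon)+O(\epsilon'))N}$, while $\|D\al(c_i)|_{E_j}\|\le e^{(-\delta_j+O(\epsilon))N}$ for the partner index $j$ (the one with $2|i-j$, so that $E_j\subset E^s_{c_i}$), where $\delta_j=-\max_{\psi\in\Psi_j}\psi(b_0)>0$ is bounded away from $0$ uniformly in $\epsilon$. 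Thus choosing $\epsilon,\epsilon'$ small depending on $\beta$ makes the exponential rate of the product in \eqref{eqn:FiberBu} negative, and replacing $c_i$ by a high enough power turns this into the genuine inequality \eqref{eqn:FiberBu}; the same estimate yields \eqref{eqn: contract faster}, the middle inequality there being automatic since $\|D\al(-c_i)|_E\|^{-1}$ and $\|D\al(c_i)|_E\|$ are the smallest and largest singular values of $D\al(c_i)|_E$. This is the step for which the subexponential estimate of Section \ref{section: uniform estimate} is indispensable: without it, motion along $L$ could already produce exponential growth on $E$ and ruin both \eqref{eqn: contract faster} and \eqref{eqn:FiberBu}.

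For the third stage, fix $i\in\{1,2\}$: $\al(c_i)$ is an Anosov element with stable bundle $E\oplus E_i$ in which, by \eqref{eqn: contract faster}, $E_i$ is dominated by $E$. The strong stable manifold theorem applied to this gap foliates the leaves of $W^s_{c_i}$ by a fast invariant subfoliation tangent to $E_i$, which coincides with $W_i$ by uniqueness, and endows $W^s_{c_i}$ with a local product structure by $W$ and $W_i$; the gap \eqref{eqn: contract faster} is moreover exactly the bunching hypothesis of the invariant-section ($C^r$-section) theorem, so $W_i$ is a $C^1$ subfoliation of $W^s_{c_i}$ and the $W_i$-holonomy between $W$-leaves is $C^1$, as in \cite{KS07, KS13}. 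Since these objects depend only on the pair $W,W_i$ and not on the element used to exhibit them, repeating the argument with the elements obtained as $\epsilon=\epsilon_k\to0$ (which improves the bunching to order $k$) upgrades $C^1$ to $C^\infty$. The principal difficulty is the joint calibration in the second stage: one must use the neutral (subexponential) behaviour of $E$ along $L$ from Section \ref{section: uniform estimate} to realize simultaneously the pinching \eqref{eqn: contract faster} and the fiber-bunching \eqref{eqn:FiberBu} for an arbitrarily small prescribed $\beta$.
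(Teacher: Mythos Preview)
Your argument is correct and follows essentially the same route as the paper: construct the four adjacent Weyl chambers to $L$ from a generic element of $L$ (the paper does this via a generic $2$-plane $P$, which amounts to the same picture), push the $c_i$ toward $L$ and invoke Lemma~\ref{lemma:sub exp grwth on plane} to obtain \eqref{eqn: contract faster} and \eqref{eqn:FiberBu}, and then deduce (4) from the standard smoothness of the fast subfoliation inside a stable leaf. One small slip worth noting: the inequality \eqref{eqn: contract faster} alone is only the domination condition, not the $C^1$ bunching hypothesis for the $C^r$-section theorem; however, your $\epsilon\to0$ device in fact produces $\|D\alpha(c_i)|_{E_i}\|\cdot\|D\alpha(-c_i)|_E\|^{r+1}<1$ for every $r$, which \emph{is} the correct $C^r$ hypothesis, and this is exactly what the paper is tacitly using when it cites \cite{FKS1,KS07} for the ``well-known'' $C^\infty$ smoothness of the fast part. (Also, your parenthetical about $E_1$ or $E_2$ possibly vanishing is unnecessary: under the totally Anosov hypothesis the chamber of $b_0-\epsilon v$ must have nontrivial stable bundle, forcing $E_1\neq0$, and symmetrically $E_2\neq0$.)
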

\begin{proof}
We take a generic plane $P$ in $\RR^k$ which intersects different Lyapunov hyperplanes by different lines. We can order these oriented lines and corresponding negative Lyapunov half-spaces cyclically $L=L_1, \cdots, L_n$. The TNS assumption implies that different negative Lyapunov half-spaces correspond to different Lyapunov hyperplanes. Let $m$ be the index such that $-L_1$ is between $L_m$ and $L_{m+1}$. There are two Weyl chambers $\C_1, \C_2$ in the negative Lyapunov half-space $L_1^-$ whose intersections with the plane $P$ border $L_1$. Similarly there are two Weyl chambers $\C_3, \C_4$ across $L_1$ in the positive Lyapunov half-space $L_1^+$. 

By the totally Anosov assumption on $\al$ (see Proposition \ref{prop: properties suspen}), all elements in $\C_i, i=1,\dots, 4$ are Anosov. Then for any $c_i\in \C_i\cap \ZZ^k, i=1,\dots 4$, and denote by
\begin{eqnarray}\label{eqn: def c34}
E_1:=E^s_{c_3}, ~~E_2:=E^s_{c_4}
\end{eqnarray} 
Then reorder $\C_1, \C_2$ if necessary, we have (cf. section 5 of \cite{FKS1})
\begin{equation}\label{eqn: def c12}
E\oplus E_1=E^s_{c_1},~~E\oplus E_2=E^s_{c_2} 
\end{equation}
Then we have $TM=E_1\oplus E\oplus  E_2$ and $E_1, E_2, E\oplus E_1, E\oplus E_2$ are tangent to invariant foliations $W_1, W_2, W\oplus W_1, W\oplus W_2$ respectively. Since stable foliation of Anosov diffeomorphism always H\"older continuous and with smooth leaves. Then we prove (1),(2), and first half of (4) of Proposition \ref{prop: E12 splt}.

It is well-known that a fast part of a stable foliation is $C^\infty$ inside a stable leaf (cf. \cite{FKS1}, \cite{KS07}). Therefore to prove the rest of (4) we only need to prove (3). We take $c_1'\in \C_1\cap P$ and close to a unit vector $b\in L\cap P$. We consider the path $b^1(t)=(1-t)b+tc_1'$, by our assumption $\{b^1(t), t\in (0,1)\}$ is contained in $\C_1\cap P$. We claim that for $t>0$ small enough, $n>0$ sufficiently large, $b_1=nb^1(t)$ satisfies that
\begin{equation}\label{eqn: b1 faster}
\|D\tal(b_1)|_{E_1}\|<\|D\tal(-b_1)|_E\|^{-1}\leq \|D\tal(b_1)|_E\|<1
\end{equation}

Since $b$ is not in any other Lyapunov hyperplane, $b$ uniformly contracts $E_1$. Therefore there exist $\theta, c, $ such if $t$ small enough, for any $n$ we have 
\begin{equation}\label{eqn: nb1t est E1}
\|D\tal(nb^1(t))|_{E_1}\|<ce^{-\theta n}
\end{equation}

On the other hand, by smoothness of $\tal$, there exists $c'>0$ such that for $t>0$ small, we have 
\begin{equation}\label{eqn: estm b1t-b}
e^{-c't}<\|D\tal(b^1(t)-b)|_E\|<e^{c't}, ~~ e^{-c't}<\|D\tal(b- b^1(t))|_E\|<e^{c't}
\end{equation}

We fix a $t$ such that $c't\ll \theta$ and \eqref{eqn: nb1t est E1} holds, where $\theta$ is defined in \eqref{eqn: nb1t est E1}. Since $b^1(t)$ is an Anosov element, then for $n$ large enough, $b_1:=nb^1(t)$ uniformly contracts $E$, i.e. 
\begin{equation}\label{eqn: b1 contracts E}
\|D\tal(b_1)|_{E}\|<1
\end{equation}
Notice that for any small positive number $\epsilon$ such that $\epsilon\ll \theta$ by \eqref{eqn: sub exp grwth} there exists $C(\epsilon)>0$ such that 
\begin{equation}\label{eqn: sub exp -nb}
\|D\tal(-nb)|_E\|<C(\epsilon) e^{\epsilon n}
\end{equation}

Then for $n$ large enough 
\begin{eqnarray}\nonumber
\|D\tal(-b_1)|_E\|&\leq & \|D\tal(-nb)|_E\|\cdot \|D\tal(n(b-b^1(t)))|_E\|\\ \nonumber
&\leq & C(\epsilon)e^{\epsilon n}\cdot e^{c'nt}~~(\text{by \eqref{eqn: estm b1t-b}, \eqref{eqn: sub exp -nb}})\\\label{eqn: est -b1 E}
&\leq &c^{-1}e^{\theta n}~~(\text{since }\epsilon, c't\ll \theta \text{ and $n$ large enough})
\end{eqnarray}
Then by \eqref{eqn: nb1t est E1} we have that for $n$ large, $b_1=nb^1(t)$ satisfies that $$\|D\tal(b_1)|_{E_1}\|<\|D\tal(-b_1)|_E\|^{-1}$$
Combining with \eqref{eqn: b1 contracts E} we prove that  $b_1=nb^1(t)\in \RR^k$ satisfies \eqref{eqn: b1 faster} if $n$ large. Replace  $c_1$ by the nearest integer point in $ \C_1\cap \ZZ^k$ to $b_1$. If $n$ sufficiently large, the $c_1$ we choose satisfies \eqref{eqn: contract faster}.
Similarly we can choose $c_2$ corresponding to $E_2$, therefore we obtain (3) of Proposition \ref{prop: E12 splt}, hence (4).

Proving  (5) is essentially the same as (3). Namely,  take $c_3'\in \C_3\cap P$ and close to a unit vector $b\in L\cap P$. We consider the path $b^3(t)=(1-t)b+tc_3'$, by our assumption $\{b^3(t), t\in (0,1)\}$ is contained in $\C_3\cap P$. We claim that for $t>0$ small enough, $n>0$ sufficiently large, $b_3=nb^3(t)$ satisfies 
\begin{equation}\label{eqn: b bunching}
\|D\tal(b_3)|_{E}\|\cdot \|D\tal(-b_3)|_{E}\|\cdot \|D\tal(b_3)|_{E_1}\|^\beta <1
\end{equation}
As in the proof of (3), by smoothness of $\tal$ and subexponetial estimate in section \ref{section: uniform estimate}, for any $\epsilon>0$, if $t>0$ sufficiently small  then $\|D\tal(nb^3(t))|_{E}\|\cdot \|D\tal(-nb^3(t))|_{E}\|=o(e^{\epsilon n})$ when $n\to \infty$. But for $t>0$ sufficiently small $\|D\tal(nb^3(t))|_{E_1}\|$ tends to $0$ uniformly exponential fast and the speed does not depend on how small $t$ is. Therefore as in  the proof of (3), we can easily choose $b^3$ hence $c_3$ satisfies \eqref{eqn:FiberBu}, and similarly we can choose $c_i, i=1,2,4$ so that they satisfy \eqref{eqn:FiberBu}.
\end{proof}

\section{Proof of Theorem \ref{thm: osl spltt cont}}\label{sec:poly-deviation}
The outline of this section is the following. In sections \ref{sec: BM meas}, \ref{sec: unif bunched, Holo}, \ref{sec: Rue thm}, \ref{sec: non-uni coc Holo}, we recall some important concepts in the theory of cocycles and smooth dynamical systems which we will use later. In the rest of this section we prove our key Theorem \ref{thm: osl spltt cont}. 


\subsection{Bowen-Margulis measure of a transitive Anosov diffeomorphism}\label{sec: BM meas}
Suppose that there are two transeverse $F_1,F_2$ foliations in an $n-$dimensional manifold $M$ where $\dim F_1=k, \dim F_2=n-k$. Denote by $F_i(x,\mathrm{loc}), i=1,2.$ the local leaves of foliations $F_i$. Then for every $x\in M$ there exists small  relative neighborhoods $B^i(x)\subset F_i(x,\mathrm{loc}),~~i=1,2$, such that $$\iota|_{ B^1(x)\times B^2(x)}: (x_1,x_2)\to [x_1,x_2]:=F_1(x_1, \mathrm{loc})\cap F_2(x_2, \mathrm{loc})$$
defines a homomorphism from $ B^1(x)\times B^2(x)$ to some neighborhood $B(x)$ for every $x\in M$. We say the foliation $F_1, F_2$ has a local product structure. A probability measure $m$ on $M$ has a \textit{local product structure} if for every $x$ in the support of $m$, there exist $m^1$ and $m^2$ on $B^1(x)$ and $B^2(x)$ respectively, such that
$$m|_{B(x)} \sim  \iota_\ast(m^1\times m^2) $$ which means that the two measures have the same zero sets. 
For more details see \cite{AV10} and the references therein.

Recall that (the stable and stable foliations of) an Anosov diffeomorphism $f$ on a compact manifold $M$ has a \emph{local product structure}. In this section we will consider some specific $\al-$invariant measures on $M$, i.e. Bowen-Margulis measure of some transitive Anosov elements\footnote{By uniqueness the Bowen-Margulis measure of an Anosov diffeomorphism, the measure $\mu_{BM(\al(b))}$ of an Anosov element $b$ is $\al-$invariant.}. The following lemma is well-known, cf. \cite{Bo}, \cite{Ma}.
\begin{lemma}Suppose $f$ is a transitive Anosov diffeomorphism on a compact manifold $M$. The \emph{Bowen-Margulis} measure $\mu_{BM}(f)$ of $f$ is the unique entropy maximizing $f-$invariant measure. In addition $\mu_{BM}(f)$ has a \emph{local product structure}.
\end{lemma}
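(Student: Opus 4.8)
The plan is to establish both statements --- uniqueness of the entropy-maximizing measure and the local product structure --- by invoking the classical thermodynamic formalism for transitive Anosov diffeomorphisms together with the Hopf-type construction of Bowen and Margulis, rather than reproving anything from scratch. First I would recall that a transitive Anosov diffeomorphism $f$ is in particular a topologically mixing Axiom A basic set (transitivity plus the local product structure of the global stable and unstable foliations forces mixing after passing to a spectral decomposition that, by transitivity, is trivial), so the full machinery of Bowen's work on equilibrium states applies. In particular the topological entropy $h_{\mathrm{top}}(f)$ is finite and positive, and the variational principle gives $h_{\mathrm{top}}(f)=\sup_\mu h_\mu(f)$ over all $f$-invariant Borel probability measures.

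Next I would deduce uniqueness: the measure of maximal entropy is exactly the equilibrium state for the zero potential $\varphi\equiv 0$, and Bowen's theorem (see \cite{Bo}, and \cite{Ma} for the specifically symbolic-dynamical construction) asserts that a Hölder continuous potential on a topologically mixing subshift of finite type --- and hence, via a Markov partition, on a topologically mixing Axiom A basic set --- admits a \emph{unique} equilibrium state. Since the zero potential is trivially Hölder, there is a unique $f$-invariant measure $\mu_{BM}(f)$ attaining the supremum $h_{\mathrm{top}}(f)$; this is the Bowen--Margulis measure. I would remark that this argument only uses transitivity of $f$, which is what the lemma hypothesizes, and not, say, the surrounding $\ZZ^k$-action structure.

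For the local product structure I would use the explicit description of $\mu_{BM}(f)$ via the construction of Margulis: there exist families of measures $\{\mu^s_x\}$ on local stable leaves and $\{\mu^u_x\}$ on local unstable leaves, each scaling by the factor $e^{-h_{\mathrm{top}}(f)}$ under the appropriate iterate of $f$ (the Margulis systems of conditional measures), with the property that on a product chart $B(x)\cong B^1(x)\times B^2(x)$ built from the stable and unstable foliations one has $\mu_{BM}(f)|_{B(x)}\sim \iota_\ast(\mu^u_x\times \mu^s_x)$ up to a continuous positive density. This is precisely the statement that $\mu_{BM}(f)$ has a local product structure in the sense defined just above, so the lemma follows. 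Alternatively, one can see this more softly: the measure of maximal entropy is at the same time the unique SRB measure for both $f$ and $f^{-1}$ in the measure-of-maximal-entropy normalization, its conditionals on unstable leaves are (up to density) the leafwise Margulis measures, and the same for stable leaves under $f^{-1}$; absolute continuity of the product chart map then gives the equivalence of $\mu_{BM}(f)|_{B(x)}$ with the product.

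The main obstacle, such as it is, is purely expository: the lemma is genuinely a textbook fact, so the ``proof'' is really a matter of citing \cite{Bo} and \cite{Ma} (or \cite{KH}) at the right granularity and making sure the hypothesis ``transitive'' is correctly translated into ``topologically mixing Axiom A basic set'' so that the uniqueness theorem for equilibrium states applies verbatim. The only point that requires a sentence of care is that transitivity of an Anosov diffeomorphism upgrades to topological mixing --- this uses that the non-wandering set is the whole connected manifold and the spectral decomposition therefore consists of a single mixing component --- and after that both assertions are immediate from the cited references; accordingly I would keep the proof to a short paragraph of citations rather than a detailed argument.
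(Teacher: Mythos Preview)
Your proposal is correct and matches the paper's approach: the paper gives no proof at all for this lemma, simply stating it as well-known with citations to \cite{Bo} and \cite{Ma}, which is exactly the strategy you outline. Your write-up is in fact more detailed than what the paper provides, but the substance --- invoke Bowen's uniqueness of equilibrium states for the zero potential and the Margulis construction of the conditional measures to get local product structure --- is precisely the intended content of those citations.
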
 


\subsection{Uniformly fiber-bunched cocycle and its Holonomy}\label{sec: unif bunched, Holo}
In this section we continue our discussion for the \textit{Holonomy} of the H\"older continuous linear cocycle over a partially hyperbolic system. 

We consider the H\"older continuous coarse Lyapunov distribution $E, E_1, E_2$, the foliations $W, W_1, W_2$, and the Lyapunov hyperplane $L$ defined in section \ref{chap: spltt}. We denote by $\beta$ the H\"older exponent of $E,E_1,E_2$. By the uniform control proved in section \ref{chap: spltt}, we can pick elements $c_1, c_2, c_3, c_4$ close to $L$ enough so that \eqref{eqn: def c12}, \eqref{eqn: def c34} hold and $D\al(c_i)|_E$ is a fibered bunched cocycle over $c_i$, $i=3,4$, here $E$ can be viewed as the central distribution for partially hyperbolic diffeomorphisms $\al(c_i),i=1,2$. 

Therefore we can consider the stable Holonomy for cocycles $D\al(c_i)|_E$, we denote by $$H^{W_j}_{c_i, x,y}:=H^j_{c_i,x,y}$$ if $W_j\subset W^s(c_i)$. Consider the holonomy map $h^i$ along $W_i$ between two $W$ leaves. By uniqueness of Holonomy in Proposition \ref{prop: properties holonomies}, we have 
\begin{lemma}\label{lemma: Hlmy 2nd def}For any $x,y$ in the same $W_1$ leaf, 
\begin{equation}
H^1_{c_1, x,y}=H^1_{c_3,x,y}=dh^{1}_{xy}
\end{equation}
Similarly, for any $x,y$ in the same $W_2$ leaf, 
\begin{equation}H^2_{c_2,x,y}=H^2_{c_4, x, y}=dh^2_{xy} 
\end{equation}
\end{lemma}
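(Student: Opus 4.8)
The plan is to establish each of the two identities by the same argument, so I will focus on the first chain $H^1_{c_1,x,y}=H^1_{c_3,x,y}=dh^1_{xy}$. The strategy has two parts: first identify the two cocycle-induced stable Holonomies $H^1_{c_1,\cdot}$ and $H^1_{c_3,\cdot}$, and then identify them both with the derivative of the foliation holonomy $h^1$.

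For the first equality, recall that by Proposition \ref{prop: E12 splt} we have $E_1=E^s_{c_3}$ and $E\oplus E_1=E^s_{c_1}$, so $W_1\subset W^s(c_1)\cap W^s(c_3)$; moreover, by the fiber-bunching verified in section \ref{sec: unif bunched, Holo} (using \eqref{eqn:FiberBu}), the restricted derivative cocycle $D\al|_E$ is fiber-bunched over both $c_1$ and $c_3$ when these elements are chosen close enough to $L$. The point is that the stable Holonomy of a fiber-bunched cocycle over $W_1$ is, by Proposition \ref{prop: properties holonomies}(c), the \emph{unique} continuous family $H_{xy}\colon E_x\to E_y$ satisfying properties (1)--(3) together with the closeness estimate (4). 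Since $c_1$ and $c_3$ lie in the same Weyl chamber geometry relative to $L$ and both have $E$ in their stable distribution, the Holonomy $H^1_{c_3,xy}$ constructed from $D\al(c_3)|_E$ automatically satisfies property (3) for $D\al(c_1)|_E$ as well — this is the standard fact that the stable Holonomy along a leaf depends only on the leaf and the cocycle, not on which contracting element of the abelian action is used to define it (any two such elements have commuting dynamics and contract $W_1$, so the limit in Proposition \ref{prop: properties holonomies}(b) along either gives a family satisfying (1)--(4)). By the uniqueness statement (c), $H^1_{c_1,xy}=H^1_{c_3,xy}$.

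For the second equality, I would use that $W_1$ is a smooth subfoliation inside $W\oplus W_1$ with uniformly smooth leaves (Proposition \ref{prop: E12 splt}(4)), so the holonomy map $h^1\colon W(x,\mathrm{loc})\to W(y,\mathrm{loc})$ along $W_1$ between nearby $W$-leaves is a genuine $C^1$ (indeed $C^\infty$ along leaves) diffeomorphism, and $dh^1_{xy}\colon E_x=T_xW\to T_yW=E_y$ makes sense. One then checks that the family $(x,y)\mapsto dh^1_{xy}$ satisfies (1)--(3): property (1) is linearity of the derivative, property (2) is the chain rule together with $h^1_{xx}=\mathrm{id}$ and the cocycle/composition property of foliation holonomies, and property (3) — the equivariance $dh^1_{xy}=(D\al(c_3)^n_y)^{-1}\circ dh^1_{\al(c_3)^nx,\,\al(c_3)^ny}\circ D\al(c_3)^n_x$ — follows by differentiating the conjugacy relation $\al(c_3)^n\circ h^1=h^1\circ \al(c_3)^n$ coming from invariance of $W_1$ under $\al(c_3)$. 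Finally property (4), the H\"older-closeness of $dh^1_{xy}$ to a H\"older identification $I_{xy}$, follows because $W_1$ has uniformly smooth leaves depending H\"older-continuously on the base point, which gives $\|dh^1_{xy}-I_{xy}\|\le C\,\mathrm{dist}(x,y)^\beta$. Invoking uniqueness in Proposition \ref{prop: properties holonomies}(c) once more yields $dh^1_{xy}=H^1_{c_3,xy}$. The second chain $H^2_{c_2,xy}=H^2_{c_4,xy}=dh^2_{xy}$ is proved verbatim with the roles of $E_1,c_1,c_3$ replaced by $E_2,c_2,c_4$.

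The main obstacle I anticipate is verifying property (4) — the H\"older closeness estimate — for $dh^1_{xy}$, since a priori the foliation $W_1$ is only H\"older, and one must exploit the uniform smoothness of its leaves (the fact that a fast part of a stable foliation is $C^\infty$ along leaves, cf. \cite{FKS1}, \cite{KS07}, already used in section \ref{chap: spltt}) to get the H\"older dependence of $dh^1$ transversally. Once (4) is in hand, the identification is a soft consequence of the uniqueness of fiber-bunched Holonomies, so the rest is routine bookkeeping with the cocycle axioms.
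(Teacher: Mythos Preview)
Your proposal is correct and follows essentially the same approach as the paper: verify that $dh^i$ satisfies properties (1)--(4) of Proposition~\ref{prop: properties holonomies} for the fiber-bunched cocycle $D\al|_E$ and invoke the uniqueness clause (c). The only difference is organizational --- the paper notes in one line that $dh^i$ satisfies all the axioms and concludes both equalities simultaneously, whereas you separate out $H^1_{c_1}=H^1_{c_3}$ as a first step; that step is in fact unnecessary, since the same verification of (1)--(4) for $dh^1$ that yields $dh^1=H^1_{c_3}$ works verbatim with $c_1$ (as $\al(c_1)$ also preserves $W_1$).
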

\begin{proof}Obviously $dh^i$ satisfies all the properties in Proposition \ref{prop: properties holonomies}, therefore by uniqueness of stable Holonomy of the cocycles $D\al(c_i)|_E$, we get the proof.
\end{proof}


\begin{lemma}\label{lemma: strong Osc space Hlmy inv}
For any $x\in M, y\in W_1(x), v\in E_x-\{0\}$, we have 
\begin{eqnarray}\label{eqn: same fwd behv}
\lim_{n\to \infty}\frac{\|D\al(nc_i)\cdot v\|}{\|D\al(nc_i)\cdot (dh^1_{xy}\cdot v)\|}=1, ~~i=1,3
\end{eqnarray}
Therefore the vectors $v$ and $dh^1_{xy}\cdot v$ have the same forward Lyapunov exponents (if they exist) under the action of  $D\al(c_i)|_E, i=1,3$. 
\end{lemma}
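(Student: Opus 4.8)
The plan is to read the claim off directly from the defining cocycle--invariance of the stable Holonomy, using Lemma \ref{lemma: Hlmy 2nd def} to identify $dh^1_{xy}$ with that Holonomy, and Proposition \ref{prop: properties holonomies} for the quantitative control. Fix $i\in\{1,3\}$ and write $f=\al(c_i)$, so that $f^n=\al(nc_i)$ and $D\al(nc_i)_z=F^n_z$ for the cocycle $F=D\al(c_i)|_E$. By Proposition \ref{prop: E12 splt}(2) the leaf $W_1(x)$ sits inside $W^s(c_i)$ --- for $i=3$ it \emph{is} $W^s(c_3)$, and for $i=1$ it is the strong--stable subfoliation inside $W^s(c_1)=W\oplus W_1$ --- so $f$ contracts it and $d(f^n x,f^n y)\to 0$ exponentially whenever $y\in W_1(x)$. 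By Lemma \ref{lemma: Hlmy 2nd def}, $dh^1_{xy}=H^1_{c_i,x,y}$, and item (3) of the definition of Holonomies gives, for every $n\ge 0$,
\[
dh^1_{xy}=\big(D\al(nc_i)_{y}\big)^{-1}\circ H^1_{c_i,\,f^n x,\,f^n y}\circ D\al(nc_i)_{x}.
\]
Applying this to $v$ and setting $w_n:=D\al(nc_i)_{x}\cdot v\in E_{f^n x}$ yields $D\al(nc_i)_{y}\cdot(dh^1_{xy}\cdot v)=H^1_{c_i,f^n x,f^n y}\cdot w_n$, so the left--hand ratio in \eqref{eqn: same fwd behv} is exactly $\|w_n\|/\|H^1_{c_i,f^n x,f^n y}w_n\|$.

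Next I would show $\|H^1_{c_i,f^n x,f^n y}w_n\|/\|w_n\|\to 1$ uniformly in the direction of $w_n$. The cocycle $F=D\al(c_i)|_E$ is fiber bunched over $W_1$: this is precisely the estimate \eqref{eqn:FiberBu} arranged for the $c_i$ in Proposition \ref{prop: E12 splt}, read against \eqref{eqn: f bunch def} (with $W_1$ in the role of the stable, resp. strong--stable, foliation of the partially hyperbolic $\al(c_i)$). Hence Proposition \ref{prop: properties holonomies} applies and gives $\|H^1_{c_i,z,w}-I_{zw}\|\le C\,d(z,w)^\beta$ for $w$ in the local $W_1$--leaf of $z$, and then for all $w\in W_1(z)$ by item (3). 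Since $I_{zw}$ is $\beta$--H\"older close to the identity and the norm on $E$ is $\beta$--H\"older in the base point, this upgrades to a distortion bound $\big|\,\|H^1_{c_i,z,w}u\|/\|u\|-1\,\big|\le C'\,d(z,w)^\beta$ valid for every $u\neq 0$. Taking $z=f^n x$, $w=f^n y$, $u=w_n$ and using $d(f^n x,f^n y)\to 0$ proves \eqref{eqn: same fwd behv}. The Lyapunov exponent assertion is then immediate: if $\lim_n\frac1n\log\|D\al(nc_i)\cdot v\|$ exists, then since $\frac1n\log\big(\|D\al(nc_i)v\|/\|D\al(nc_i)(dh^1_{xy}v)\|\big)\to 0$, the corresponding limit for $dh^1_{xy}v$ exists and coincides with it.

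I do not expect a genuine obstacle: the lemma is essentially a repackaging of Holonomy invariance. The points that need care are, first, that for $i=1$ the relevant object is the \emph{stable} Holonomy of $D\al(c_1)|_E$ with respect to the partially hyperbolic structure on $\al(c_1)$ in which $E_1$ is the strong--stable bundle --- legitimate exactly because of the bunching \eqref{eqn:FiberBu} produced in Proposition \ref{prop: E12 splt}(5) --- and second, that $d(f^n x,f^n y)\to 0$ rather than merely staying bounded, which is what forces the error term to vanish in the limit. For $i=3$ both points are automatic, since there $W_1=W^s(c_3)$ and $D\al(c_3)|_E$ is fiber bunched over $W^s(c_3)$ outright.
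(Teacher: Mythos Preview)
Your proof is correct and follows essentially the same approach as the paper's: use the cocycle relation (property (3) of the Holonomy, combined with Lemma \ref{lemma: Hlmy 2nd def}) to rewrite the ratio in \eqref{eqn: same fwd behv} as $\|w_n\|/\|dh^1_{f^n x,f^n y}w_n\|$, and then conclude via the H\"older estimate $\|H^1-I\|\le C\,d(\cdot,\cdot)^\beta$ and the contraction of $W_1$ under $\al(c_i)$. The paper compresses this to a one-line argument; you have simply made explicit the fiber-bunching verification (via \eqref{eqn:FiberBu}) and the passage to the limit.
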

\begin{proof}Notice that $D\al(nc_i)\circ dh^1_{xy}=dh^{1}_{\al(nc_i)x, \al(nc_i)y}\circ D\al(nc_i)$, then by (3). in Proposition \ref{prop: properties holonomies} we get the proof.
\end{proof}

Suppose now $x,y$ are Oseledec generic points for the cocycle $D\al|_E$ with respect to an $\al-$invariant ergodic measure $\mu$, $E=E_{m_1\chi}\oplus \cdots \oplus E_{m_n\chi}$ $\mu-$almost everywhere, where $0<m_1<\cdots<m_n$. Then we have 
\begin{lemma}\label{lemma: emk uni Holo inv}
$E_{m_k\chi}(y)=dh^1_{xy}\cdot E_{m_k\chi}(x), k=1,\dots, n$. Similarly we have corresponding Holonomy invariance for Oseledec spaces $E_{m_k\chi}$ along $W_2$.
\end{lemma}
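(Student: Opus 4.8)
The plan is to use the characterization of Oseledec subspaces in terms of asymptotic growth rates together with the Holonomy invariance of forward exponents established in Lemma \ref{lemma: strong Osc space Hlmy inv}. Recall that at an Oseledec generic point $x$ the subspace $E_{m_k\chi}(x)$ is cut out, within $E_x$, by the forward exponential growth rate: writing $\lambda_k = m_k\chi(c_1)$, a nonzero vector $v\in E_x$ lies in $E_{m_1\chi}(x)\oplus\cdots\oplus E_{m_k\chi}(x)$ iff $\limsup_{n\to\infty}\frac1n\log\|D\al(nc_1)\cdot v\| \le \lambda_k$, and the flag $\{E_{m_1\chi}\oplus\cdots\oplus E_{m_j\chi}\}_j$ is the associated filtration. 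The key point is that by Lemma \ref{lemma: strong Osc space Hlmy inv} (applied to $i=1$, say), for any $v\in E_x\setminus\{0\}$ the vectors $v$ and $dh^1_{xy}\cdot v$ have the \emph{same} forward Lyapunov exponent under $D\al(c_1)|_E$ whenever one of them exists; moreover, by the same uniform comparison the growth rates agree on the whole subspace level, not just for individual Oseledec vectors.

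First I would fix $i=1$ (the argument for $i=3$ is identical, and for $W_2$ one uses $i=2,4$) and set $F_j(x):=E_{m_1\chi}(x)\oplus\cdots\oplus E_{m_j\chi}(x)$. Because $y\in W_1(x)$ and $x,y$ are \emph{both} Oseledec generic, the Oseledec filtration $F_j(y)$ exists and is similarly characterized by the upper growth rate $\le m_j\chi(c_1)$. Next I would show $dh^1_{xy}(F_j(x)) = F_j(y)$: given $v\in F_j(x)$, the relation $D\al(nc_1)\circ dh^1_{xy} = dh^1_{\al(nc_1)x,\al(nc_1)y}\circ D\al(nc_1)$ from property (3) of Proposition \ref{prop: properties holonomies}, combined with the uniform bound $\|H^1_{x'y'} - I_{x'y'}\|\le C\,\d(x',y')^\beta$ of Proposition \ref{prop: properties holonomies} and the contraction $\d(\al(nc_1)x,\al(nc_1)y)\to 0$ along $W_1\subset W^s(c_1)$, forces
\[
\frac1n\log\|D\al(nc_1)\cdot (dh^1_{xy}v)\| = \frac1n\log\|D\al(nc_1)\cdot v\| + o(1),
\]
so $dh^1_{xy}v$ has upper growth rate $\le m_j\chi(c_1)$, i.e. $dh^1_{xy}v\in F_j(y)$. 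Applying the same argument to $(dh^1_{xy})^{-1} = dh^1_{yx}$ gives the reverse inclusion, so $dh^1_{xy}(F_j(x))=F_j(y)$ for every $j$. Then I would obtain the claim $dh^1_{xy}(E_{m_k\chi}(x))=E_{m_k\chi}(y)$ by intersecting the filtration with the dual backward filtration: since the same reasoning applied to $D\al(-nc_1)$ (using that $dh^1$ also intertwines the backward cocycle, and that $E_{m_k\chi}$ is simultaneously characterized by a lower growth bound for the forward cocycle, which via $-c_1$ is an upper bound for the backward one) shows $dh^1_{xy}$ preserves the complementary flag $F'_k(x):=E_{m_k\chi}(x)\oplus\cdots\oplus E_{m_n\chi}(x)$ as well, and $E_{m_k\chi}=F_k\cap F'_k$.

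The main obstacle I anticipate is making the genericity hypothesis do its job cleanly on \emph{both} endpoints at once: the holonomy $dh^1_{xy}$ is defined for \emph{all} $x$ and $y\in W_1(x)$, but the Oseledec filtration at $y$ is only guaranteed to exist on a full-measure set, and there is no reason a single $W_1$-leaf meets that set in more than a null subset of the leaf. So the statement as phrased must be read as "for $\mu$-a.e. $x$ and, among those, for $y\in W_1(x)$ that are themselves Oseledec generic" — or else one must invoke absolute continuity of $W_1$-holonomy with respect to the local product structure of $\mu$ to transport genericity along leaves. I would resolve this by working with the subset $G$ of generic points, noting $\mu(G)=1$, and using Fubini with respect to the local product structure (which $\mu$, being among the Bowen--Margulis-type measures considered in section \ref{sec: BM meas}, possesses) to conclude that for $\mu$-a.e. $x$ the set $\{y\in W_1(x): y\in G\}$ has full conditional measure; the identity $dh^1_{xy}(E_{m_k\chi}(x))=E_{m_k\chi}(y)$ then holds for all such pairs, which is exactly what the subsequent arguments need. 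The rest — the growth-rate comparison and the filtration-intersection bookkeeping — is routine given Lemma \ref{lemma: strong Osc space Hlmy inv} and Proposition \ref{prop: properties holonomies}.
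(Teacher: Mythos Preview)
Your argument has the right skeleton --- characterize $E_{m_k\chi}$ as the intersection of two opposite flags and show $dh^1_{xy}$ preserves each --- but there is a genuine gap in how you obtain the complementary flag.

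First a minor sign issue: since $\chi(c_1)<0$ and $0<m_1<\cdots<m_n$, the exponents $m_k\chi(c_1)$ are \emph{decreasing} in $k$, so the forward filtration $\{v:\limsup_{n}\frac1n\log\|D\al(nc_1)v\|\le m_k\chi(c_1)\}$ equals $E_{m_k\chi}\oplus\cdots\oplus E_{m_n\chi}$, not $E_{m_1\chi}\oplus\cdots\oplus E_{m_k\chi}$ as you wrote. This only swaps which flag $c_1$ detects; it does not break the strategy.

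The real problem is your proposal to get the other flag from $D\al(-nc_1)$. You correctly observe that $dh^1$ intertwines the whole action, so
\[
D\al(-nc_1)\circ dh^1_{xy}=dh^1_{\al(-nc_1)x,\,\al(-nc_1)y}\circ D\al(-nc_1).
\]
But $W_1\subset W^s(c_1)$ means $W_1\subset W^u(-c_1)$: under $-c_1$ the points $\al(-nc_1)x$ and $\al(-nc_1)y$ separate exponentially along $W_1$, and the bound $\|H^s_{x'y'}-I_{x'y'}\|\le C\,d(x',y')^\beta$ from Proposition~\ref{prop: properties holonomies} gives no control whatsoever. Without a subexponential bound on $\|dh^1_{\al(-nc_1)x,\,\al(-nc_1)y}\|$ you cannot conclude that $v$ and $dh^1_{xy}v$ share the same \emph{backward} exponent under $c_1$, so the complementary flag is not shown to be $dh^1$-invariant by this route.

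The paper's fix is precisely the ingredient you dismissed as ``identical'': the element $c_3$. By Proposition~\ref{prop: E12 splt} one has $E_1=E^s_{c_3}$, so $W_1\subset W^s(c_3)$ as well, and by Lemma~\ref{lemma: Hlmy 2nd def} the map $dh^1$ is \emph{also} the stable Holonomy for $D\al(c_3)|_E$. Since $\chi(c_3)>0$, the \emph{forward} growth under $c_3$ detects the opposite flag $E_{m_1\chi}\oplus\cdots\oplus E_{m_k\chi}$. Lemma~\ref{lemma: strong Osc space Hlmy inv} with $i=3$ then shows this flag is $dh^1$-invariant too, and intersecting the two flags yields $E_{m_k\chi}$. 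Thus $c_1$ and $c_3$ are not redundant but complementary --- one lies on each side of the hyperplane $L$ --- and that is exactly what closes the argument. Your final paragraph on genericity is a reasonable concern but unnecessary here: the lemma already assumes both $x$ and $y$ are Oseledec generic.
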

\begin{proof}Since $\chi(c_1)<0, \chi(c_3)>0$, we have that 
\begin{eqnarray*}
&&\{v\in E_y-\{0\}, \lim_{n\to \infty}\frac{1}{n}\ln\|D\al(n{c_1})\cdot v\|=m_k\chi(c_1)\} \\
&=& E_{m_k\chi}(y)\oplus \cdots \oplus E_{m_n\chi}(y)-E_{m_{k+1}\chi}(y)\oplus \cdots \oplus E_{m_n\chi}(y)
\end{eqnarray*}

\begin{eqnarray*}
&&\{v\in E_y-\{0\}, \lim_{n\to \infty}\frac{1}{n}\ln\|D\al(n{c_3})\cdot v\|=m_k\chi(c_3)\} \\
&=& E_{m_1\chi}(y)\oplus \cdots \oplus E_{m_k\chi}(y)-E_{m_{1}\chi}(y)\oplus \cdots \oplus E_{m_{k-1}\chi}(y)
\end{eqnarray*}
Therefore
$$E_{m_k\chi}(y)=\{0\}\cup \{v\in E_y, \lim_{n\to \infty}\frac{1}{n}\ln\|D\al(n{c_i})\cdot v\|=m_k\chi(c_i), i=1,3\}$$ 
Then by Lemma \ref{lemma: strong Osc space Hlmy inv} and equation \eqref{eqn: same fwd behv} we know $E_{m_k\chi}$ is invariant under $dh^1$. The proof for Holonomy-invariance along $W_2$ is similar.
\end{proof} 
 
\subsection{Ruelle's Theorem}\label{sec: Rue thm}
We recall the following version of a result by D. Ruelle, cf.  Theorem 6.3 in \cite{R79}, section 2 of \cite{Me}, or section 5.1 of \cite{KS17}.  Suppose $f$ is a $C^\infty$ Anosov diffeomorphism on a compact manifold $M$, and $\mu$ is an ergodic $f$-invariant measure. The Oseledec splitting $\oplus_i E_{\lambda_i}$ for cocycle $Df$ is well-defined $\mu-$almost everywhere, where 
$$\lambda_1 < \cdots < \lambda_s < 0< \lambda_{s+1}< \cdots $$
Moreover,
\begin{theorem}[Theorem 6.3 in \cite{R79}]\label{thm: r79}There exists a $\mu-$full measure $f-$invariant set $X\subset M$ such that for any $x\in X$, the flag of fast Oselededc subspaces is smooth along $W^s(x,f)$, i.e. there exists a smooth flag $$\cV_1\subset \cdots\subset\cV_s=E^s$$ defined on the whole stable leaf of $x\in X$ and for any $y\in W^s(x)\cap X, 1\leq k\leq s$, $$\cV_k(y)=\oplus_{i=1}^k E_{\lambda_i}(y)$$
\end{theorem}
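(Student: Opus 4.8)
This is Ruelle's theorem (Theorem~6.3 of \cite{R79}; see also \cite{Me}, \cite{KS17}), so I only sketch the argument. The plan: work one stable leaf at a time; extend each fast subspace $\cV_k$ from the Oseledec-generic set to the whole leaf $W^s(x,f)$ by a canonical backward-iteration limit, using the spectral gap $\lambda_k<\lambda_{k+1}$; then read off $C^\infty$ regularity by presenting the resulting flag as an attracting invariant section of a Grassmannian bundle over the \emph{expanding} inverse dynamics, where the Hirsch--Pugh--Shub $C^r$-section theorem applies for every $r$.

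First, fix a $\mu$-full measure $f$-invariant set $X_0$ of Oseledec-regular points, fix $x\in X_0$, and put $W=W^s(x,f)$. On $X_0$ the fast flag is characterised by $\cV_k(y)=\{v\in E^s_y:\ \limsup_n \tfrac1n\log\|Df^n v\|\le\lambda_k\}$, with $\dim\cV_k=d_k$ (the number of exponents, with multiplicity, that are $\le\lambda_k$). For arbitrary $y\in W$, choose for each $n$ a $\beta$-H\"older linear identification $\tau_n\colon E^s_{f^nx}\to E^s_{f^ny}$ that is $\beta$-H\"older close to the identity (legitimate since $f^nx,f^ny$ lie on a common stable leaf, at distance $\le C\theta^n$, $\theta<1$), and set
\[
\cV_k(y):=\lim_{n\to\infty}\,(Df^n_y)^{-1}\,\tau_n\bigl(\cV_k(f^nx)\bigr)\ \in\ \mathrm{Gr}_{d_k}(E^s_y).
\]
The key estimate: the subspaces $V_n:=\tau_n(\cV_k(f^nx))\subset E^s_{f^ny}$ stay ``$\cV_k$-type'' along the orbit (they are H\"older-close to genuine fast subspaces, which one checks by a shadowing argument along the orbit segment joining $x$ and $y$), so near $V_n$ the induced Grassmannian map $(Df^n_y)^{-1}$ is a contraction with rate $\le e^{n(\lambda_k-\lambda_{k+1}+o(1))}\to0$; combined with $\mathrm{dist}_{\mathrm{Gr}}\bigl(Df_{f^ny}V_n,\ V_{n+1}\bigr)=O(\theta^{\beta n})$ this makes the sequence Cauchy, and the limit is independent of the $\tau_n$. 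This yields a continuous, $Df$-equivariant flag $\cV_1\subset\cdots\subset\cV_s=E^s$ over $W$ that agrees with the Oseledec flag on $W\cap X_0$.

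Next, upgrade continuity to $C^\infty$ along $W$. The inverse map $f^{-1}$ acts on stable leaves as a $C^\infty$ diffeomorphism that \emph{expands}: the conorm of $Df^{-n}|_{E^s}$ is bounded below by $e^{-n\lambda_s}>1$. By construction $\cV_k$ is the fibrewise attracting invariant section of the bundle map that $(Df)^{-1}$ induces on $\mathrm{Gr}_{d_k}(E^s)$ over this expanding base, with fibre contraction rate $\le e^{\lambda_k-\lambda_{k+1}}<1$. Since $e^{n(\lambda_k-\lambda_{k+1})}<1<e^{-rn\lambda_s}$ for every $r\in\ZZ^+$, the Hirsch--Pugh--Shub $C^r$-section theorem --- in its sequential form along the orbit $\{f^jx\}$ (the leaves $W^s(f^jx,f)$ being distinct), or in its periodic form when $x$ is periodic --- gives that the section is $C^r$ along the leaf for every $r$, hence $C^\infty$. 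Doing this for $k=1,\dots,s$ produces the asserted smooth flag on $W^s(x,f)$ for all $x\in X_0$. (Alternatively: invoke nonstationary sub-resonance normal forms for the contraction $f|_W$ and note that each $\cV_k$ becomes a fixed coordinate subspace in the normal-form chart.)

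The main obstacle is the construction step, not the regularity step: one must show that the pulled-back subspaces $V_n$ remain fast-type --- a shadowing/fiber-bunching argument along orbit segments --- so that the Grassmannian maps are genuinely contracting there, and one must verify that the backward limit is canonical (independent of the auxiliary $\tau_n$) so it glues correctly onto the measurable Oseledec data on $X_0$. Once that is in place, $C^\infty$ regularity is essentially automatic, since the relevant base dynamics expands; this is a feature special to \emph{fast} subbundles, and the argument would break for intermediate Oseledec subspaces, consistent with the fact that only the fast flag is asserted to be smooth.
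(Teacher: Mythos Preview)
The paper does not prove this statement; it is quoted as Ruelle's theorem and only cited (to \cite{R79}, with \cite{Me} and \cite{KS17} for modern treatments). Your sketch is a reasonable outline of the argument and gets the two essential points right: the fast flag extends along the whole leaf via a backward-iteration limit exploiting the spectral gap $\lambda_k<\lambda_{k+1}$, and smoothness follows because the base map $f^{-1}$ \emph{expands} the stable leaf, so in the $C^r$-section criterion $k_x\alpha_x^r<1$ both factors are below $1$ and the inequality holds for every $r$.

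Two remarks. First, your displayed inequality $e^{n(\lambda_k-\lambda_{k+1})}<1<e^{-rn\lambda_s}$ is not literally the HPS condition; what you need is the product $k_x\alpha_x^r<1$, which here reads $e^{\lambda_k-\lambda_{k+1}}\cdot e^{r\lambda_s}<1$ and is indeed automatic since both exponents are negative. Second, and more substantively, the fiber contraction rate $e^{\lambda_k-\lambda_{k+1}}$ on the Grassmannian is only an \emph{asymptotic} (Oseledec) rate along the orbit of a regular point, not a uniform bound, so the standard HPS theorem does not apply as stated; you correctly gesture at this by invoking a ``sequential form along the orbit'' and the alternative via non-stationary normal forms, but this non-uniformity is exactly where the work lies. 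Ruelle's original argument in \cite{R79} handles it directly via a Perron--Irwin scheme in adapted charts with tempered estimates, and the normal-form route in \cite{Me}, \cite{KS17} is the cleaner modern packaging.
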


\subsection{Non-uniformly fiber bunched cocycle and non-uniform Holonomy}\label{sec: non-uni coc Holo}
For (uniformly) fiber bunched H\"older continuous cocycle over a partially hyperbolic system we already defined the associated (un)stable Holonomy. In the following sections we will meet non-uniformly fiber bunched cocycles. We can define the associated non-uniform Holonomy map as well, cf. \cite{V08}. For simplicity, suppose $f$ is an $C^2$ Anosov diffeomorphism on a compact manifold $M$ and $\mu$ is an $f-$invariant probability measure on $M$. Let $F:E\to E$ be a H\"older continuous linear cocycle over $f$, where $E$ is a H\"older continuous vector bundle over $M$ (the same setting as in section \ref{sec: holo HOLO} and \cite{KS13}). As in Proposition \ref{prop: properties holonomies} for any two nearby points $x,y\in M$ we set $I_{xy}:E_x\to E_y$ be a linear identification which is H\"older close to the identity.

\begin{prop}\label{prop: nonunif holo def}If all the Lyapunov exponents of $F$ coincide, then for $\mu-$almost every $x\in M$, for any $y,z\in W^s_{loc}(x)$, for any $j\geq 0$, $$H^s_{f^j(y),f^j(z)}:=\lim_{n\to \infty}F^n(f^j(z))^{-1}\circ I_{f^{n+j}(y), f^{n+j}(z)}\circ F^n(f^j(y))=F^j(z)\cdot H^s_{yz}\cdot F^j(y)^{-1}$$
exists and is H\"older continuous along $W^s_{loc}$. Here the size of $W^s_{loc}(x)$ is \textbf{uniform} and only depends on $F$ and $f$. Similarly we could define $H^u$ along unstable manifold of $F$.
\end{prop}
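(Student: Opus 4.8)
The plan is to construct $H^s$ as an a.e.-defined limit of the "uniform-looking" expression $F^n(z)^{-1}\circ I_{f^nx f^ny}\circ F^n(x)$ restricted to a full-measure invariant set on which the cocycle behaves temperedly, and then use the cocycle relation to propagate the definition along forward orbits, which automatically extends it to a uniform-size local stable leaf. First I would invoke Oseledets to a full-measure $f$-invariant set $\Gamma$ where, since all Lyapunov exponents of $F$ coincide to a common value $\lambda$, for every $\varepsilon>0$ there is a tempered (subexponentially varying) function $C_\varepsilon(x)$ with $e^{-\varepsilon n}C_\varepsilon(x)^{-1}e^{\lambda n}\le \|F^n(x)v\|\le e^{\varepsilon n}C_\varepsilon(x)e^{\lambda n}\|v\|$; in particular $\|F^n(x)\|\cdot\|F^n(x)^{-1}\|\le C_\varepsilon(x)^2 e^{2\varepsilon n}$. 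Along $W^s_{loc}(x)$ the contraction rate of $f$ is uniformly exponential, say $d(f^n(x),f^n(y))\le K\rho^n d(x,y)$ for a fixed $\rho<1$, and $I_{f^nx f^ny}$ is $\theta$-H\"older close to the identity, so $\|(F^{n+1})^{-1}I_{f^{n+1}} F^{n+1}-(F^n)^{-1}I_{f^n}F^n\|$ is bounded by $\|F^n(x)^{-1}\|\|F^n(y)\|$ times something $O(\rho^{\theta n})$ via the standard telescoping identity used in Proposition \ref{prop: properties holonomies}(a).

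The point where extra care is needed, and the place this differs from the uniform case, is that $\|F^n(x)^{-1}\|\|F^n(y)\|$ is no longer uniformly controlled by a fixed fiber-bunching constant; instead it is controlled by $C_\varepsilon(x)C_\varepsilon(y)e^{2\varepsilon n}$. I would therefore choose $\varepsilon$ small enough that $e^{2\varepsilon}\rho^{\theta}<1$, which forces the telescoped series $\sum_n \|(F^{n+1})^{-1}I_{f^{n+1}}F^{n+1}-(F^n)^{-1}I_{f^n}F^n\|$ to converge, with the tail bounded by $C_\varepsilon(x)C_\varepsilon(y)(e^{2\varepsilon}\rho^\theta)^N d(x,y)^\theta$. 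This gives existence of $H^s_{yz}:=\lim_n F^n(z)^{-1}I_{f^nyf^nz}F^n(y)$ for $y,z\in W^s_{loc}(x)$, $x\in\Gamma$, together with the H\"older estimate $\|H^s_{yz}-I_{yz}\|\le C(x)d(y,z)^\theta$ where $C(x)$ is tempered; H\"older continuity of $(y,z)\mapsto H^s_{yz}$ along $W^s_{loc}$ follows from this estimate plus the cocycle relation exactly as in the uniform argument. The properties $H^s_{yy}=\mathrm{id}$, the transitivity $H^s_{zw}H^s_{yz}=H^s_{yw}$, and the equivariance $H^s_{yz}=F^m(z)^{-1}H^s_{f^my f^mz}F^m(y)$ are then immediate from the definition by manipulating the defining limits.

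Finally, to get the stated formula $H^s_{f^j(y),f^j(z)}=F^j(z)H^s_{yz}F^j(y)^{-1}$ and the claim that the domain extends to a uniform-size leaf $W^s_{loc}(x)$ independent of $x$, I would simply rearrange the equivariance relation: since $f^j$ maps $W^s_{loc}(x)$ into $W^s_{loc}(f^j(x))$ and $f^j(x)\in\Gamma$ when $x\in\Gamma$, the expression $F^j(z)H^s_{yz}F^j(y)^{-1}$ is exactly the holonomy at $f^j(x)$ between $f^j(y)$ and $f^j(z)$, and reading the limit defining $H^s_{f^j(y),f^j(z)}$ after the change of index $n\mapsto n+j$ recovers the same object. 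The uniformity of the size of $W^s_{loc}$ comes from the fact that the local stable manifold size and the H\"older-identification domain are uniform features of $f$ and $E$, not of $\mu$; the only non-uniform ingredient, the tempered constant $C(x)$, affects the \emph{H\"older constant} of $H^s$ on that leaf but not its radius of definition. The main obstacle is the summability step: one must verify that the tempered growth $e^{2\varepsilon n}$ coming from the absence of a genuine gap can be absorbed into the uniform contraction $\rho^{\theta n}$ along stable leaves, i.e. that $\varepsilon$ can be taken small relative to $-\theta\log\rho$ uniformly in $x$, which is where the hypothesis that \emph{all} Lyapunov exponents of $F$ coincide is essential (it is what makes $\|F^n(x)\|\cdot\|F^n(x)^{-1}\|$ subexponential rather than merely $e^{(\lambda_{\max}-\lambda_{\min})n}$).
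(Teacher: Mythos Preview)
Your proposal is essentially correct and follows the same route as the paper, which simply defers to Viana \cite{V08}, Section~2: both arguments telescope the defining sequence and use that the subexponential blow-up of $\|F^n\|\cdot\|(F^n)^{-1}\|$ (guaranteed by the single-exponent hypothesis via Oseledets) is beaten by the uniform exponential contraction $\rho^{\theta n}$ along $W^s_{loc}$. The paper's remark that in the Anosov base case the holonomy block $\mathcal H(K,\tau)$ of \cite{V08} may be taken to be all of $M$ is exactly your observation that the non-uniform tempered constant affects only the H\"older constant of $H^s$, not the radius of the leaf on which it is defined.

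There is, however, one genuine imprecision. You bound $\|F^n(z)^{-1}\|\cdot\|F^n(y)\|$ by $C_\varepsilon(x)C_\varepsilon(y)e^{2\varepsilon n}$, implicitly treating $y$ and $z$ as Oseledets regular. But the proposition asserts existence of $H^s_{yz}$ for \emph{every} $y,z\in W^s_{loc}(x)$, and a local stable leaf has $\mu$-measure zero, so a priori no point of $W^s_{loc}(x)$ besides $x$ itself need lie in your regular set $\Gamma$; writing $C_\varepsilon(y)$ is not justified. The standard fix (which is what \cite{V08} actually does) is to run the telescoping first for $H_n^{yx}:=F^n(x)^{-1}I_{f^ny,f^nx}F^n(y)$, where only the single factor $\|F^n(x)\|\cdot\|F^n(x)^{-1}\|$ appears: one gets
\[
\|H_{n+1}^{yx}-H_n^{yx}\|\le C_\varepsilon(x)^2\,e^{2\varepsilon n}\rho^{\theta n}\,\|H_n^{yx}\|,
\]
and since $\sum_n C_\varepsilon(x)^2(e^{2\varepsilon}\rho^\theta)^n<\infty$ a Gr\"onwall-type product bound gives $\sup_n\|H_n^{yx}\|<\infty$ and hence convergence. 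From $F^n(y)=I_{f^ny,f^nx}^{-1}F^n(x)H_n^{yx}$ one then reads off $\|F^n(y)\|\le C\,\|F^n(x)\|$ and likewise $\|F^n(z)^{-1}\|\le C\,\|F^n(x)^{-1}\|$, which supplies exactly the missing input to your estimate for general $y,z$. With this adjustment your argument is complete and coincides with the content of \cite{V08} that the paper invokes.
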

\begin{proof}Proposition \ref{prop: nonunif holo def} is basically proved in \cite{V08}, section 2. The following are some slight differences between the settings here and that of \cite{V08}.

In section 2. of \cite{V08} the author consider the case that the bundle $E$ is trivial or smooth, but all the estimates there also work for H\"older continuous bundle. Moreover, since here our base dynamics is uniformly hyperbolic, we can choose the \textit{holonomy block} $\H(K,\tau)$ in \cite{V08} to be $M$. In addition, $W^s_{loc}(x)$ has uniform size for every $x$ and $f$ is uniform contracting on $W^s_{loc}$. Then the estimates in Proposition 2.5, Corollary 2.8 of \cite{V08} hold on $W^s_{loc}(x)$ for $\mu-$almost every $x$. As a result $H^s_{yz}$ is well-defined on $W^s_{loc}(x)$  for $\mu-$almost every $x$. 
\end{proof}
In \cite{V08}, Viana proved a non-uniform version of the \emph{invariance principle} \footnote{The word \emph{invariance principle} is used in \cite{AV10} for general smooth cocycles.} which was first obtained by Ledrappier \cite{L}, cf. \cite{V08}, Proposition 3.1. We denote the projectification of $H^s_{yz}$ by $h^s_{yz}$, and the projectification of $F:E\to E$ by $\PP(F):\PP(E)\to \PP(E)$. As in Proposition \ref{prop: nonunif holo def} we assume that all the Lyapunov exponents of $F$ are equal. Notice that the statement here is a weaker version of Proposition 3.1 in \cite{V08} since there $f$ is not necessary to be  Anosov.
\begin{prop}\label{prop: non unif inv prpl}
Let $m$ be any $\PP(F)-$invariant probability measure that projects down to $\mu$. Then the disintegration $m_z$ is invariant under the stable Holonomy $h^s$ $\mu-$almost everywhere. More precisely, there exists a full measure subset $E^s$ of $M$ such that for any $z_1,z_2\in M, z_2\in W^s_{loc}(z_1)$ if $z_1,z_2\in E^s$ then we have $$m_{z_2}=(h^s_{z_1,z_2})_\ast m_{z_1}$$ Similarly we have unstable Holonomy invariance for the disintegration $m_z$ of $m$.
\end{prop}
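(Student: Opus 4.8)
The plan is to reduce this to the non-uniform invariance principle of Viana (\cite{V08}, Proposition 3.1) by checking that its hypotheses are met in our present setting, and then to extract the stated almost-everywhere statement on local stable leaves. First I would recall the setup: $F:E\to E$ is a H\"older continuous linear cocycle over the $C^2$ Anosov diffeomorphism $f$, all Lyapunov exponents of $F$ with respect to $\mu$ coincide, and $m$ is a $\PP(F)$-invariant probability on $\PP(E)$ projecting to $\mu$. By Proposition \ref{prop: nonunif holo def} the non-uniform stable Holonomy $H^s_{yz}$ exists and is H\"older continuous on $W^s_{\mathrm{loc}}(x)$ for $\mu$-almost every $x$, with uniform local-leaf size; its projectification $h^s_{yz}$ is therefore a well-defined family of projective maps on these local leaves. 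The disintegration $z\mapsto m_z$ of $m$ along the fibers of $\PP(E)\to M$ exists and is $\mu$-measurable since $M$ is a standard Borel space.

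The core step is to invoke Viana's invariance principle: since $f$ is in particular a measure-preserving transformation admitting the (uniform) hyperbolic structure, the holonomy block of \cite{V08} can be taken to be all of $M$ (as already observed in the proof of Proposition \ref{prop: nonunif holo def}), and the hypothesis that all fiberwise Lyapunov exponents vanish relative to one another is exactly our assumption. Viana's conclusion is that the disintegration $m_z$ is invariant under the stable Holonomy along local stable leaves on a full-measure set; symmetrically along unstable leaves. Concretely, there is a full $\mu$-measure $f$-invariant set $E^s\subset M$ such that for $z_1\in E^s$ and $z_2\in W^s_{\mathrm{loc}}(z_1)\cap E^s$ one has $m_{z_2}=(h^s_{z_1,z_2})_\ast m_{z_1}$, which is the assertion of the Proposition. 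The unstable version is obtained by applying the same argument to $f^{-1}$ (noting $W^s_{\mathrm{loc}}(f^{-1})=W^u_{\mathrm{loc}}(f)$ and that $F$ over $f^{-1}$ again has all Lyapunov exponents equal).

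The only real work is bookkeeping: one must verify that the slight discrepancies between our framework and that of \cite{V08} are harmless. Viana works with a trivial or smooth bundle, whereas $E$ here is merely H\"older; but the Holonomy estimates (\cite{V08}, Proposition 2.5, Corollary 2.8) use only H\"older continuity of the cocycle and local identifications $I_{xy}$ H\"older close to the identity, so they transfer verbatim. Likewise Viana allows non-invertible or non-uniformly hyperbolic base dynamics via a holonomy block of positive measure; in our uniformly hyperbolic, invertible situation this block is all of $M$ and the local stable leaves have uniform size, which only simplifies matters. I expect the main (and only genuine) obstacle to be making precise the measurability of the disintegration and the identification of the exceptional null set, so that the pointwise identity $m_{z_2}=(h^s_{z_1,z_2})_\ast m_{z_1}$ holds simultaneously for all pairs $z_1,z_2$ in a single full-measure set rather than merely for $\mu\times(\text{conditional})$-almost every pair; this is handled exactly as in \cite{V08}, Proposition 3.1, by a standard Fubini-type argument together with the continuity of $h^s$ along leaves.
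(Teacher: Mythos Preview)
Your proposal is correct and matches the paper's approach exactly: the paper does not give an independent proof but simply cites \cite{V08}, Proposition 3.1, noting that the present statement is a weaker special case (since here the base is uniformly Anosov) and that the minor discrepancies in setting---H\"older rather than smooth bundle, holonomy block equal to all of $M$---are handled just as in the proof of Proposition \ref{prop: nonunif holo def}. Your bookkeeping remarks about these discrepancies are accurate and in fact more explicit than what the paper records.
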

\subsection{Continuity of Oseledec filtration: local product structure and continuity}
\label{subsec: produ and cont}
In this section, we fix the measure $\mu$ which is the Bowen-Margulis measure of $\al(c_4)$, where $c_4$ is defined in Proposition \ref{prop: E12 splt}. Therefore $\mu$ has the local product structure with respect to the foliations $W_2$ and $W\oplus W_1$, i.e. locally $\mu \cong \mu^{W\oplus W_1}\times \mu^{W_2}$. In addition by uniqueness of Bowen-Margulis measure for transitive Anosov diffeomorphism we know that $\mu$ is $\al-$invariant.

Let $E=E_{m_1\chi}\oplus \cdots \oplus E_{m_n\chi}$, $0<m_1<\cdots<m_n$ be the $\mu-$almost everywhere defined Oseledec splitting of $E$ for the cocycle $D\al$, where $E$ is defined in section \ref{chap: spltt}. We claim that for any $k$, $1\leq k\leq n$, up to (redifining the values on) a $\mu-$neglible set, $E_{m_k\chi}\oplus \cdots \oplus E_{m_n\chi}$ is a H\"older continuous $D\al-$invariant bundle on $M$. 

The first step is the following corollary of Ruelle's theorem.
\begin{lemma}\label{lemma: non uni holo inv fast part}
Up to a modification on a $\mu-$neglible set, for $\mu-$almost every $x\in M$,  $E_{m_k\chi}\oplus \cdots \oplus E_{m_n\chi}$ is H\"older continuous on $W\oplus W_1(x)$. 
\end{lemma}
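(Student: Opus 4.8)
The plan is to deduce this lemma from Ruelle's Theorem (Theorem \ref{thm: r79}) applied to a suitable Anosov element whose stable manifolds contain the leaves of $W\oplus W_1$, combined with the uniform control provided by Proposition \ref{prop: E12 splt}. First I would fix an Anosov element $c_2 \in \ZZ^k$ as in Proposition \ref{prop: E12 splt}, so that $E^s_{c_2} = E \oplus E_2$; wait — I actually want the element whose stable distribution is $E \oplus E_1$, so I will take $c_1$ instead, for which $E^s_{c_1} = E \oplus E_1$ and hence the stable foliation of $\al(c_1)$ is $W \oplus W_1$. The key observation is that, along each leaf of $W \oplus W_1$, the element $c_1$ acts as a $C^\infty$ Anosov diffeomorphism (the leaves being uniformly smooth by Proposition \ref{prop: E12 splt}(4)), so Ruelle's Theorem applies: there is a $\mu$-full-measure $\al(c_1)$-invariant set $X$ such that for $x \in X$ the flag of fast Oseledec subspaces of $D\al(c_1)$ extends to a smooth, hence continuous, flag along the whole leaf $(W \oplus W_1)(x)$.

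The main point is then to match the Oseledec filtration of $D\al$ restricted to $E$ with a piece of the fast Oseledec flag of $D\al(c_1)$ on the full stable manifold $E^s_{c_1} = E \oplus E_1$. Since $\chi(c_1) < 0$ and $E_1 = E^s_{c_3}$ with the ``contracts faster'' estimate \eqref{eqn: contract faster} forcing $E_1$ to be the slowest (most strongly contracted) part inside $E^s_{c_1}$, the Oseledec spaces of $D\al(c_1)|_{E^s_{c_1}}$, listed from most-contracted to least-contracted, begin with the refinement of $E_1$ and then continue with $E_{m_1\chi}, \dots, E_{m_n\chi}$ (ordered by $m_1 < \cdots < m_n$, since more negative Lyapunov exponent means more contraction under $c_1$). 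Consequently the tail $E_{m_k\chi} \oplus \cdots \oplus E_{m_n\chi}$ equals the quotient of one fast Oseledec subspace of $D\al(c_1)$ by another, both of which are smooth along $(W\oplus W_1)(x)$ by Ruelle; hence so is their ``difference'' distribution. One must be slightly careful: Ruelle gives the fast flag $\cV_1 \subset \cdots \subset \cV_s$, and $E_{m_k\chi}\oplus\cdots\oplus E_{m_n\chi}$ is not literally an intersection but sits as a complement inside a $\cV_j$; however it coincides $\mu$-a.e. with the dynamically characterized set $\{v : \lim \frac1n \ln \|D\al(nc_1)v\| = m_k\chi(c_1)\} \cup \{0\}$ unioned over the tail, which is exactly $\cV_{j}$ for the appropriate index, so after discarding a null set it is genuinely one of the smooth subspaces $\cV_j$. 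I would spell this out by writing $E_{m_k\chi}\oplus\cdots\oplus E_{m_n\chi} = \cV_{j(k)}(x) \cap E_x$ for a suitable index $j(k)$, valid on a $\mu$-full set, and noting that $\cV_{j(k)}$ is smooth along the leaf while $E$ is already H\"older continuous globally, so the intersection is H\"older continuous along $(W\oplus W_1)(x)$.

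The step I expect to be the main obstacle is the bookkeeping that identifies exactly which entries of the fast Oseledec flag of $D\al(c_1)$ on $E^s_{c_1}$ correspond to the desired tail of the $E$-filtration — in particular checking that the $E_1 = E^s_{c_3}$ part really is uniformly more contracted by $\al(c_1)$ than every $E_{m_i\chi}$, so that no interleaving occurs. This is where estimate \eqref{eqn: contract faster} from Proposition \ref{prop: E12 splt} is essential: it gives $\|D\al(c_1)|_{E_1}\| < \|D\al(-c_1)|_E\|^{-1} \le \|D\al(c_1)|_E\| < 1$, which at the level of Lyapunov exponents says all exponents of $D\al(c_1)$ on $E_1$ are strictly below all exponents on $E$. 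Granting that, the flag of $D\al(c_1)|_{E^s_{c_1}}$ decomposes as a flag through $E_1$ followed by the flag through $E$, and within $E$ the filtration by $D\al(c_1)$-Lyapunov exponents is exactly the filtration $E_{m_n\chi} \subset E_{m_{n-1}\chi}\oplus E_{m_n\chi} \subset \cdots$ since $c_1$-exponent of $E_{m_i\chi}$ is $m_i\chi(c_1)$ and $\chi(c_1)<0$ reverses the order of the $m_i$. Once the indexing is pinned down, the remaining verification — that Ruelle's smooth flag restricted to $E$ is H\"older continuous and that modifying on a null set causes no issue — is routine, completing the proof of the lemma.
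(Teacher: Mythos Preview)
Your approach is correct and essentially identical to the paper's: apply Ruelle's Theorem to $D\al(c_1)$ on $E^s_{c_1}=E\oplus E_1$, so that the fast Oseledec flag is smooth along each leaf $W\oplus W_1(x)$, and then observe that $E_{m_k\chi}\oplus\cdots\oplus E_{m_n\chi}$ equals the intersection of one member $\cV_{j(k)}$ of that smooth flag with the globally H\"older continuous subbundle $E$, hence is H\"older along the leaf. Your concern about interleaving of the exponents on $E_1$ with those on $E$ is actually unnecessary---since $E$ and $E_1$ are both $D\al(c_1)$-invariant, each $\cV_j$ splits as $(\cV_j\cap E)\oplus(\cV_j\cap E_1)$ and the intersection $\cV_j\cap E$ automatically gives the desired tail for the index $j$ with $\lambda_j=m_k\chi(c_1)$---though estimate \eqref{eqn: contract faster} does also settle it directly as you note.
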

\begin{proof}
We consider an arbitrary Oseledec fast stable subspace  $\cV$ in $E\oplus E_1$ for cocycle $D\al(c_1)$. By Theorem \ref{thm: r79}, we know up to a modification on a $\mu-$neglible set, for $\mu-$almost every $x$, $\cV$ is smooth along $W\oplus W_1(x)$. Notice that $E_{m_k\chi}\oplus\cdots \oplus E_{m_n\chi}$ is the intersection of a fast subspace in $E\oplus E_1$ with the H\"older continuous subspace $E$ of $E\oplus E_1$. Hence up to a modification on a neglible set, $E_{m_k\chi}\oplus\cdots \oplus E_{m_n\chi}$ is H\"older continuous along $W\oplus W_1(x)$ for almost every $x$.
\end{proof}

Now we prove the main result of this subsection by creating a non-uniform \emph{redefining mechanism}, cf. Proposition 4.8 in \cite{AV10} for redefining argument for uniform fiber-bunched cocycles. Roughly speaking, consider a fiber bundle over a hyperblic space with a probability measure with local product structure, if a measurable section of the bundle is invariant under both of \textit{uniform unstable Holonomy} and \textit{non-uniform stable Holonomy}, then our redefining argument shows that it should coincide with a continuous section almost everywhere.

\begin{lemma}\label{lemma: product structure and continuity}The measurable subbundle $E_{m_k\chi}\oplus\cdots \oplus E_{m_n\chi}$ coincides with a H\"older continuous subbundle $\mu-$almost everywhere.
\end{lemma}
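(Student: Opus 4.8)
The plan is to exploit the local product structure of $\mu = \mu_{BM}(\al(c_4))$ with respect to the transverse pair of foliations $W_2$ and $W\oplus W_1$, together with the two complementary invariance statements we have already assembled: Lemma \ref{lemma: non uni holo inv fast part} (which gives H\"older continuity of $\Ee_k := E_{m_k\chi}\oplus\cdots\oplus E_{m_n\chi}$ along $W\oplus W_1$ leaves, hence in particular invariance under the \emph{uniform} holonomy $dh^1$ along $W_1$ and continuity along the whole $W\oplus W_1$ plaque) and Lemma \ref{lemma: emk uni Holo inv} together with the non-uniform Holonomy machinery of Section \ref{sec: non-uni coc Holo} (which will give invariance of $\Ee_k$ under the \emph{non-uniform stable Holonomy} $H^2$ along $W_2$). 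The key observation driving the argument is that $\Ee_k$, viewed as a measurable section of the Grassmann bundle $\mathrm{Gr}(E)$ over $M$, descends from an $\al(c_4)$-equivariant section; since all Lyapunov exponents of $D\al(c_4)|_E$ on the relevant quotient behave identically up to the coarse grading, $\Ee_k$ is invariant under $H^{W_2}$ (non-uniform, along the stable foliation $W_2 = W^s_{c_4}$) by the non-uniform invariance principle (Proposition \ref{prop: non unif inv prpl}) applied to a $\PP(D\al(c_4))$-invariant measure on the Grassmannian lift supported on the graph of $\Ee_k$.

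The main work is the \emph{non-uniform redefining argument} itself. First I would fix a density point $x_0$ of $\mu$ and a small product box $B(x_0) \cong B^{W\oplus W_1}(x_0)\times B^{W_2}(x_0)$ on which $\mu \sim \iota_\ast(\mu^{W\oplus W_1}\times\mu^{W_2})$. On a full-$\mu^{W_2}$-measure set of points $w\in B^{W_2}(x_0)$, the plaque $(W\oplus W_1)(w)\cap B$ meets the good set $X_1$ of Lemma \ref{lemma: non uni holo inv fast part} in full $\mu^{W\oplus W_1}$-measure, so on that plaque $\Ee_k$ agrees $\mu^{W\oplus W_1}$-a.e. with a genuinely continuous section; call its continuous extension $\Ee_k^{(w)}$ on the whole plaque. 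Symmetrically, on a full-measure set of $y\in B^{W\oplus W_1}(x_0)$ the transverse plaque $W_2(y)\cap B$ carries the non-uniform stable Holonomy $H^2$, and $\Ee_k$ restricted to it is $H^2$-equivariant, hence determined on a full $\mu^{W_2}$-measure subset of that plaque by its value at a single point via $H^2$. Now I would run the redefinition: for a point $(y,w)\in B$ choose a generic reference $w_\ast$ (so that $\Ee_k^{(w_\ast)}$ is defined on all of the $w_\ast$-plaque) and set $\widehat{\Ee}_k(y,w) := (H^2_{[y,w_\ast]\,,\,[y,w]})\big(\Ee_k^{(w_\ast)}([y,w_\ast])\big)$; Fubini over the product structure, plus the a.e.-agreement statements on both families of plaques, shows $\widehat{\Ee}_k = \Ee_k$ $\mu$-a.e.; and continuity of $\widehat{\Ee}_k$ follows because $(y,w)\mapsto \Ee_k^{(w_\ast)}([y,w_\ast])$ is continuous in $y$ along a fixed plaque and $H^2$ depends H\"older-continuously on its endpoints along $W_2$ (this is the uniform-size part of Proposition \ref{prop: nonunif holo def}, applied on the holonomy block $M$). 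One then checks the definition of $\widehat{\Ee}_k$ is independent of the choice of reference $w_\ast$ using the cocycle property of $H^2$ and the $W_2$-invariance of $\Ee_k$, so that the redefined section is globally well-defined and continuous; H\"older regularity is inherited from the H\"older bounds on $H^2$ and on the $W\oplus W_1$-continuity of Lemma \ref{lemma: non uni holo inv fast part}. Finally, $D\al$-invariance of $\widehat{\Ee}_k$ is automatic since the a.e.-equal section $\Ee_k$ is $D\al$-invariant and $\widehat{\Ee}_k$ is continuous.

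The hard part will be pushing the Holonomy invariance of $\Ee_k$ from the \emph{leafwise} statements (continuity along $W\oplus W_1$; $H^2$-equivariance along $W_2$) through the product box in a way that is genuinely \emph{uniform} in the base point rather than merely a.e.-defined on each plaque separately. Concretely, the non-uniform stable Holonomy $H^2$ a priori exists only on $W^s_{loc}$ of $\mu$-a.e.\ point with no control over how the "good set" sits relative to the product structure; the crucial input that makes the redefinition work is that in our setting the base map $\al(c_4)$ is \emph{uniformly} hyperbolic and $W_2 = W^s_{c_4}$, so Proposition \ref{prop: nonunif holo def} gives $H^2$ on a \emph{uniform-size} $W^s_{loc}$ with H\"older dependence on endpoints — this is exactly what lets the holonomy block be taken to be all of $M$ and converts an a.e.\ object into a section one can evaluate consistently on every plaque of a fixed box. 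I would also need to be slightly careful that the non-uniform invariance principle (Proposition \ref{prop: non unif inv prpl}) applies: one must produce a $\PP(D\al(c_4))$-invariant (more precisely, Grassmann-$D\al(c_4)$-invariant) measure projecting to $\mu$ and supported on the graph of $\Ee_k$, which is immediate from measurability of $\Ee_k$ and its $D\al(c_4)$-invariance, and then verify the hypothesis that the relevant Lyapunov exponents coincide — which is where Lemma \ref{lemma:sub exp grwth on plane} and the coarse-Lyapunov structure of $E$ (all exponents of $D\al$ on $E$ are positively proportional to $\chi$) feed in, after restricting to the graded pieces.
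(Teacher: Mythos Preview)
Your overall redefining strategy---pick a reference $(W\oplus W_1)$-plaque, use Lemma~\ref{lemma: non uni holo inv fast part} to get a H\"older section there, then propagate along $W_2$ via a Holonomy---is exactly the paper's argument. But you have the roles of \emph{uniform} and \emph{non-uniform} reversed along $W_2$, and this creates a genuine gap.

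You propose to transport along $W_2$ by the \emph{non-uniform} stable Holonomy $H^2$ of $D\al(c_4)|_E$, invoking Propositions~\ref{prop: nonunif holo def} and~\ref{prop: non unif inv prpl}. Both of those propositions require that \emph{all Lyapunov exponents of the cocycle coincide}. But $D\al(c_4)|_E$ has exponents $m_1\chi(c_4)<\cdots<m_n\chi(c_4)$ with $\chi(c_4)>0$, so they are distinct whenever $n\geq 2$. Passing to the Grassmannian or an exterior power does not help: the exponents of $\wedge^{d_k}(D\al(c_4)|_E)$ are sums of $d_k$ of the $m_i\chi(c_4)$ and are again not all equal. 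Your appeal to ``the coarse-Lyapunov structure of $E$'' and ``restricting to the graded pieces'' does not produce a cocycle with a single exponent on which Proposition~\ref{prop: nonunif holo def} applies. Moreover, even when Proposition~\ref{prop: nonunif holo def} does apply, it only gives H\"older dependence \emph{along} a fixed local stable leaf for a.e.\ basepoint, not the joint H\"older continuity in both endpoints that your formula for $\widehat{\Ee}_k$ needs.

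The fix is that you already have everything you need along $W_2$ without any non-uniform machinery. By Proposition~\ref{prop: E12 splt}(5) the cocycle $D\al(c_4)|_E$ is \emph{uniformly fiber-bunched} over $W_2=W^s_{c_4}$, so the uniform stable Holonomy exists by Proposition~\ref{prop: properties holonomies}, and by Lemma~\ref{lemma: Hlmy 2nd def} it equals $dh^2$. Lemma~\ref{lemma: emk uni Holo inv} then says directly that each Oseledec space $E_{m_j\chi}$---hence the flag $\Ee_k$---is $dh^2$-invariant. This $dh^2$ is defined everywhere and H\"older in $(x,y)$, so the redefined section $\tE(z'):=dh^2_{z_1'z'}\cdot\tE_0(z_1')$ is immediately H\"older on the whole box. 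That is precisely what the paper does: the \emph{non-uniform} input is only Ruelle's theorem along $W\oplus W_1$ (Lemma~\ref{lemma: non uni holo inv fast part}); the $W_2$ direction is handled by the uniform $dh^2$. The non-uniform invariance principle is saved for the next step (Lemma~\ref{lemma: hold cont cEi}), where one works inside $E_{m_n\chi}$ and the single-exponent hypothesis is genuinely satisfied.
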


\begin{proof}For $\epsilon>0$, the foliation $W$ and any point $z\in M$, consider the leaf metric on $W$ induced by a smooth Riemannian metric on $M$. We denote by $W_\epsilon(z)$ the open $W-$ball centered $z$ with diameter $\epsilon$ (the leaf metric) on the $W-$leaf passes through $z$. Similarly we could define $(W_i)_\epsilon(z), (W\oplus W_i)_\epsilon(z)$. 

We firstly prove Lemma \ref{lemma: product structure and continuity} locally. i.e. for any $z\in M$ and  a ``rectangle" neighborhood $B(z)$ of $z$ with uniform size $\epsilon$, $B(z):=(W\oplus W_1)_{\epsilon}(z)\times (W_2)_{\epsilon}(z)$,  there is a continuous bundle  $\tE$ defined on $B(z)$ such that $\tE=E_{m_k\chi}\oplus\cdots \oplus E_{m_n\chi}$ almost everywhere. Here ``$\times$" is canonically defined by the local product structure of the foliations $W\oplus W_1$ and $W_2$.

 Here $\epsilon$ is chosen to be much smaller than the diameter of the neighborhood with product strucutre for $\al(c_4)$ and for any $z'\in B(z)$, $(W\oplus W_1)_{3\epsilon}(z')$ and $(W_2)_{3\epsilon}(z')$ are transverse to the boundary of $B(z)$. By Lemmas \ref{lemma: non uni holo inv fast part},  \ref{lemma: emk uni Holo inv}, there exists a $\mu-$full measure set $M_0$ of $M$ such that restricted on $M_0$, we have 
\begin{enumerate}
\item (Uniform Holonomy invariance along $W_2$) $E_{m_k\chi}\oplus\cdots \oplus E_{m_n\chi}$ is Holonomy invariant (under $dh^2$) along $W_2$ (by Lemma \ref{lemma: emk uni Holo inv}). 
\item (Non-uniform Holonomy invariance along $W\oplus W_1$) For any $x\in M_0$, the restriction on $(W\oplus W_1(x))\cap M_0$ of $E_{m_k\chi}\oplus\cdots \oplus E_{m_n\chi}|_{M_0}$ can be extended to be a H\"older continuous map along $W\oplus W_1(x)$ (by Lemma \ref{lemma: non uni holo inv fast part}).
\end{enumerate}

Since $\mu$ has local product structure, there exists $z_0
\in B(z)\cap M_0$ such that there is a full $\mu^{W\oplus W_1}-$ measure set $\cR(z_0)$ in $(W\oplus W_1(z_0))_{3\epsilon}\cap B(z)$ satisfying $\cR(z_0)\subset M_0$ and for any $z_1\in \cR(z_0)$, $(W_2)_{3\epsilon}(z_1)\cap M_0\cap B(z)$ has $\mu^{W_2}-$full measure in $(W_2)_{3\epsilon}(z_1)\cap B(z)$.

By our definition of $M_0$ (condition (2).), there is a H\"older continuous map from $\tE_0: (W\oplus W_1(z_0))_{3\epsilon}$ to the Grassmannian such that $\tE_0=E_{m_k\chi}\oplus\cdots \oplus E_{m_n\chi}$ on $\cR(z_0)$. Then the H\"older continuous bundle $\tE$ on $B(z)$ could be defined as the following:
for any $z'\in B(z)$, denote $(W_2)_{3\epsilon}(z')\cap (W\oplus W_1)_{3\epsilon}(z_0)$ by $z_1'$, then $$\tE(z'):=dh^2_{z_1'z'}\cdot \tE_0(z_1')$$

We claim that $\tE=E_{m_k\chi}\oplus\cdots \oplus E_{m_n\chi}$ on a $\mu-$full measure set in $B(z)$. In fact for any $z_1\in \cR(z_0)$ and $\mu^{W_2}-$almost every $z_2\in (W_2)_{3\epsilon}(z_1)\cap B(z)$, we have that $$E_{m_k\chi}\oplus\cdots \oplus E_{m_n\chi}(z_2)=dh^2_{z_1z_2}\cdot E_{m_k\chi}\oplus\cdots \oplus E_{m_n\chi}(z_1)=dh^2_{z_1z_2}\cdot\tE_0(z_1)=\tE(z_2)$$

Since $\mu$ has local product structure, the point $z_2$ we chose could run over a full $\mu-$measure set in  $B(z)$. Therefore $\tE=E_{m_k\chi}\oplus\cdots \oplus E_{m_n\chi}$ on a $\mu-$full measure set in $B(z)$. Notice that $\tE_0$ is H\"older continous along $(W\oplus W_1(z_0))_{3\epsilon} $ and $dh^2_{xy}$ H\"older continuously depends on $(x,y)$, therefore $\tE$ is H\"older continuous on $B(z)$. But $M$ could be covered by finite such rectangle neighborhoods $B(z)$, therefore $E_{m_k\chi}\oplus\cdots \oplus E_{m_n\chi}$ coincides with a H\"older continuous subbundle $\mu-$almost everywhere.
\end{proof}

In particular, the H\"older continuous extension of $E_{m_k\chi}\oplus \cdots \oplus E_{m_n\chi}$ defined in Lemma \ref{lemma: product structure and continuity} are $D\al-$invariant as well. In the rest of this section with a slight abuse of notation, we still denote by $E_{m_k\chi}\oplus \cdots \oplus E_{m_n\chi}$ the H\"older continuous extension of the associated Oseledec subbundle within $E$ .





\subsection{Zimmer's amenable reduction and its H\"older continuous extension}\label{subsec: cont zimm}
In this section we consider the H\"older continuous cocycle $D\al|_{E_{m_n\chi}}$, where $E_{m_n\chi}$ is a $D\al-$invariant H\"older continuous subbundle, defined by Lemma \ref{lemma: product structure and continuity}. By our assumption we know for any $c\in \ZZ^k$, the cocycle $D\al(c)$ has coinciding Lyapunov exponents within $E_{m_n\chi}$, with respect to the measure $\mu$ which is defined in the beginning of the subsection \ref{subsec: produ and cont}.

Our goal for this section is to prove the following key  lemma.

\begin{lemma}\label{lemma: cont red fin cov}There exists $N\in \ZZ^+$, a finite cover $\bar{M}$ of $M$ such that within $\bar{E}_{m_n\chi}$ (the lift of the subbundle $E_{m_n\chi}$ on $\bar{M}$), there exists a $D\bal|_{N\cdot \ZZ^k}-$invariant H\"older continuous flag 
$$\{0\}=\cE_{0}\subsetneqq \cE_{1}\subsetneqq\cdots \subsetneqq \cE_{l}=\bar E_{m_n\chi}$$
and a family of H\"older continuous metrics $\|\|_{i}$ within the bundles $\cE_{i}/\cE_{i+1}$ such that for any $v
\in \cE_{i}/\cE_{i+1}$, any $b\in \ZZ^k$,
\begin{equation}\label{eqn: main res in Emnchi}
\|D\bal(Nb)\cdot v\|_{i}=e^{m_n\chi(Nb)}\cdot \|v\|_{i}
\end{equation}
Here $\bal$ is a lift of $\al|_{N\cdot \ZZ^k}$ on $\bar M$.
\end{lemma}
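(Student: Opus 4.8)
\textbf{Proof plan for Lemma \ref{lemma: cont red fin cov}.}

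The plan is to combine Zimmer's measurable amenable reduction for the cocycle $D\al|_{E_{m_n\chi}}$ with the non-uniform redefining mechanism developed in Subsection \ref{subsec: produ and cont}, exactly as we did there for the Oseledec subbundles. First I would recall that since $D\al(c)$ has coinciding Lyapunov exponents on $E_{m_n\chi}$ for every $c\in\ZZ^k$ (with respect to $\mu=\mu_{BM}(\al(c_4))$), Zimmer's amenable reduction theorem applies: there is a $\mu$-measurable $D\al|_{\ZZ^k}$-invariant flag $\{0\}=\cE_0\subsetneqq\cdots\subsetneqq\cE_l=E_{m_n\chi}$ and a measurable family of metrics on the successive quotients $\cE_i/\cE_{i-1}$ with respect to which the cocycle acts conformally, i.e. by the scalar $e^{m_n\chi(b)}$ on the $i$-th quotient (this is where the ``coinciding exponents'' hypothesis is used, to pin the conformal factor to $e^{m_n\chi(b)}$). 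One subtlety is that the measurable reduction is only $\ZZ^k$-equivariant up to a measurable coboundary into a compact group, which is the reason a finite cover $\bar M$ and a sublattice $N\cdot\ZZ^k$ are needed in the statement — after lifting, the compact-group ambiguity can be trivialised (its connected component gives the orthogonal $O_{ij}$ factors appearing in Theorem \ref{main: standard form intro}, and the finite quotient is killed by passing to $\bar M$ and $N\cdot\ZZ^k$).

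Next I would upgrade the measurable flag $\cE_i$ and the measurable metrics to H\"older continuous ones by running the redefining argument. The key observation is that the flag $\cE_i$ is invariant under the stable Holonomy of the fiber-bunched restriction $D\al(c_1)|_E$ along $W\oplus W_1$ — in the non-uniform sense of Proposition \ref{prop: non unif inv prpl}, applied to a $\PP(D\al(c_1)|_{E_{m_n\chi}})$-invariant measure obtained from pushing $\mu$ forward by the measurable section corresponding to $\cE_i$ (here one uses that $D\al(c_1)$ has all Lyapunov exponents equal on $E_{m_n\chi}$, so Propositions \ref{prop: nonunif holo def} and \ref{prop: non unif inv prpl} apply) — and simultaneously invariant under the uniform stable Holonomy of $D\al(c_4)|_E$ along $W_2$, by the same Lyapunov-exponent characterization argument as in Lemma \ref{lemma: emk uni Holo inv}. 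Feeding these two invariances (non-uniform along $W\oplus W_1$, uniform along $W_2$) into the local product structure of $\mu=\mu_{BM}(\al(c_4))$ and repeating verbatim the rectangle-by-rectangle construction of Lemma \ref{lemma: product structure and continuity}, one concludes that each $\cE_i$ agrees $\mu$-a.e. with a H\"older continuous $D\al$-invariant subbundle of $E_{m_n\chi}$. The same redefining applied to the measurable conformal metrics $\|\cdot\|_i$ (which are likewise Holonomy-equivariant, by uniqueness of Holonomy and the conformality of the action, after normalizing) produces H\"older continuous metrics on the continuous quotients $\cE_i/\cE_{i-1}$ satisfying \eqref{eqn: main res in Emnchi} on a full measure set; by continuity of both sides and density of that set (using transitivity of the Anosov elements, Lemma \ref{lemma: elem trans}), the identity holds everywhere.

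Finally, there is the bookkeeping of the finite cover: the measurable reduction produces a cocycle into a group of the form $T\rtimes F$ with $T$ a torus (the conformal/orthogonal part) and $F$ finite (permutations of the flag pieces of equal dimension and the two components of the normalizer); passing to the cover $\bar M$ associated with the finite-index subgroup of $\pi_1(M)$ on which the $F$-valued part trivialises, and restricting the action to $N\cdot\ZZ^k$ where $N$ kills the finite monodromy, we obtain an honest $D\bal|_{N\cdot\ZZ^k}$-invariant H\"older flag and metrics on $\bar E_{m_n\chi}$, which is the statement. The main obstacle I anticipate is precisely the verification that the measurable flag and metrics are Holonomy-equivariant in the non-uniform sense along $W\oplus W_1$: one must check that the $\PP(D\al(c_1))$-invariant measure built from the Zimmer section projects down to $\mu$ and disintegrates along $W\oplus W_1$ with the right conditional measures so that Proposition \ref{prop: non unif inv prpl} is applicable, and that the uniform size of $W^s_{loc}$ in Proposition \ref{prop: nonunif holo def} is compatible with the rectangle size $\epsilon$ chosen in the product-structure argument. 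Once equivariance is in hand, the redefining mechanism of Lemma \ref{lemma: product structure and continuity} transfers with only cosmetic changes.
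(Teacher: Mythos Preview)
Your overall architecture is right --- Zimmer amenable reduction, then the redefining mechanism to upgrade the measurable data to H\"older, with the finite cover and sublattice absorbing the finite ambiguity in the amenable group --- and this is what the paper does. But two steps in your plan do not work as stated.

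First, and less seriously: you justify uniform Holonomy invariance of the Zimmer flag $\cE_i$ along $W_2$ ``by the same Lyapunov-exponent characterization argument as in Lemma \ref{lemma: emk uni Holo inv}''. That argument distinguishes the Oseledec subbundles $E_{m_k\chi}$ by their \emph{different} Lyapunov exponents; inside $E_{m_n\chi}$ every vector has the \emph{same} exponent $m_n\chi$, so there is no Lyapunov characterization of $\cE_i$ at all. The correct route (and the one the paper takes) is to pass to the exterior power cocycle $\wedge^{d_i}D\al(c_4)|_{E_{m_n\chi}}$, view $\cE_i$ as a $\PP(\wedge^{d_i}D\al(c_4))$-invariant measure, observe that this cocycle again has coinciding Lyapunov exponents, and then apply the invariance principle (Proposition \ref{prop: non unif inv prpl}) to get Holonomy invariance along \emph{both} $W_2$ and $W\oplus W_1$ simultaneously. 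The uniform fiber-bunching along $W_2$ then identifies the non-uniform stable Holonomy with the uniform one there, which is what makes the redefining construction H\"older rather than merely measurable.

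Second, and this is the real gap: you assert that the measurable \emph{metrics} $\|\cdot\|_i$ from Zimmer can be redefined to H\"older ones satisfying \eqref{eqn: main res in Emnchi} directly, because they are ``Holonomy-equivariant after normalizing''. This does not work. The invariance principle governs invariant probability measures on projectivized (hence compact-fiber) bundles; it yields H\"older continuity of the \emph{conformal structure} on each $\cE_i/\cE_{i-1}$ (which the paper encodes via the appropriate exterior power), but a full metric lives in a non-compact fiber and is not captured this way. What the Zimmer reduction gives on each quotient is a measurable metric for which the cocycle acts as a scalar times an orthogonal matrix; the scalar is an a priori arbitrary measurable multiplicative cocycle whose exponent is $m_n\chi(b)$, not the constant $e^{m_n\chi(b)}$. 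Making it constant is a genuine cohomological problem. The paper solves it separately (Section \ref{sec: cls lem}): one takes the H\"older Jacobian $q(x,b)$ on $\cE_i/\cE_{i-1}$, uses a closing-lemma argument along a transitive generic singular element $a_0\in L$ (Lemma \ref{lemma: coh eqn closing}) to produce a H\"older transfer function $\phi$ with $\phi(x)\phi(\tal(b)x)^{-1}=e^{-d\chi(b)}q(x,b)$, and then combines the resulting H\"older volume form with the H\"older conformal structure to obtain the H\"older metric satisfying \eqref{eqn: main res in Emnchi}. Your plan omits this step entirely, and without it there is no reason the H\"older objects you produce satisfy the exact scaling identity \eqref{eqn: main res in Emnchi} rather than merely a conformal one.
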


\begin{proof}
Since $E_{m_n\chi}$ could be trivialized on a full $\mu-$measure set (cf. \cite{BP}, Proposition 1.2), we could measurably identify $E_{m_n}$ with $M\times \RR^{d}$, where $$d:=\dim E_{m_n\chi}$$ Then by Zimmer's amenable reduction theorem, cf. Corollary 1.8 of \cite{HK} or discussion in section 4.5 of  \cite{KS13},  there is a measurable coordinate change function $C: M\times \RR^{d}\to E_{m_n\chi}$ such that  $$A(x)=C(\al(c)\cdot x)^{-1}\cdot D\al(c)(x)\cdot C(x)\in G$$ for any $c\in \ZZ^k$ and $\mu-$a.e. $x$, where $G$ is an \emph{amenable} subgroup of $GL(d,\RR)$.

There are $2^{d-1}$ standard maximal amenable subgroups of $GL(d,\RR)$, each standard group corresponds to a composition of $d$, $$d=d_1+\cdots+ d_l$$
and consists of all block-triangular matrices of the form \begin{equation}\label{eqn: mtrx form zimmer}
\begin{pmatrix}
A_1&\ast&\cdots&\ast\\
&A_2&\cdots&\ast\\
&&\ddots&\vdots\\
&&&A_l
\end{pmatrix}
\end{equation}
where each diagonal block is a scalar multiple of orthogonal matrix. As in section 4.5 of \cite{KS13}, by Theorem 3.4 of \cite{Mo}, each amenable subgroup of $GL(d,\RR)$ has a finite index subgroup which is contained in a conjugate of one of the $2^{d-1}$ standard groups. Therefore (apply further measurable linear conjugacy if necessary) without loss of generality we may assume $G$ above has a finite index subgroup $G_0$ which is contained in one of the $2^{d-1}$ standard maximal amenable subgroups.\\

\textbf{Case 1}:
Firstly we consider the case that $G$ itself is contained in a standard subgroup of $GL(d,\RR)$. Then the subbundle $V_i$ spanned by the first $d_1+\cdots+d_i$ coordinate vectors in $\RR^d$ is $G-$invariant for $i=1,\dots, l$. As a corollary, there is a measurable $D\al-$invariant flag $$\{0\}=\cE_0\subsetneqq \cE_1\subsetneqq \cdots \subsetneqq \cE_l=E_{m_n\chi}$$ and a measurable coordinate change within $\cE_i/\cE_{i-1}$ such that the measurable cocycle $D\al$ on the bundle $\cE_i/\cE_{i-1}$ under this coordinate change is a scalar multiple of an orthogonal matrix $\mu-$almost everywhere. We call the measurable flag defined above and associated measurable conformal strucutures  (invariant angle function on $\cE_i/\cE_{i-1}$) the \emph{measurable Zimmer amenable reduction} for cocycle $D\al|_{E_{m_n\chi}}$, cf. \cite{KS13} for more details. The first step is the following lemma.

\color{black}

 \begin{lemma}\label{lemma: hold cont cEi}For any $i$, $\cE_i$ coincides with a H\"older continuous subbundle $\mu-$almost everywhere.
\end{lemma}
\begin{proof}
The strategy to prove H\"older continuity of $\cE_i$ is similar to the proof of Lemma \ref{lemma: product structure and continuity}, i.e. we will use the redefining argument again. The main difference is that, rather than applying Ruelle's result to produce non-uniform unstable Holonomy, here we use the non-uniform version of invariance principle, i.e. Proposition \ref{prop: non unif inv prpl} to construct non-uniform unstable Holonomy we need. 

We denote by $\dim \cE_i=d_i$ and consider the cocycle $D\al(c_4)|_{E}$. By estimate \eqref{eqn:FiberBu}, $D\al(c_4)|_{E}$ is uniformly fiber-bunched, and by Lemma \ref{lemma: emk uni Holo inv}, the stable Holonomy $dh^2$ along $W_2$ for cocycle $D\al(c_4)|_{E}$ preserves $E_{m_n\chi}$. 

By H\"older continuity of $E_{m_n\chi}$, the H\"older continuous cocycle $\wedge^{d_i}D\al(c_4)|_{E_{m_n\chi}}$ is well-defined. Moreover if $c_4$ is chosen to be sufficiently close to $L$, then $\wedge^{d_i}D\al(c_4)|_{E_{m_n\chi}}$ is also uniformly fiber-bunched on $W_2$ (by estimate \ref{eqn:FiberBu}) and has uniformly stable Holonomy along $W_2$. By uniqueness of Holonomy for fiber-bunched H\"older continuous cocycle (Proposition \ref{prop: properties holonomies}) we know the stable Holonomy for $\wedge^{d_i}D\al(c_4)|_{E_{m_n\chi}}$ is exactly $\wedge^{d_i}(dh_{xy}^2|_{E_{m_n\chi}})$ along $W_2$, i.e. for any $x,y$ on the same local $W_2-$leaf, we have (by b. of Proposition \ref{prop: properties holonomies})
\begin{equation}\label{eqn: unif Holo of wedge di c4 mnchi}
\wedge^{d_i}(dh_{xy}^2|_{E_{m_n\chi}})=\lim_{n\to \infty}(\wedge^{d_i}D\al(nc_4)|_{E_{m_n\chi}})(y)^{−1}\circ I_{\al(nc_4)\cdot x, \al(nc_4)\cdot y  }\circ \wedge^{d_i}D\al(nc_4)|_{E_{m_n\chi}}(x)
\end{equation}where as in definition of Holonomy of fiber-bunched cocycle, $$I_{\al(nc_4)\cdot x, \al(nc_4)\cdot y  }:\wedge^{d_i}(E_{m_n\chi})(\al(nc_4)\cdot x)\to \wedge^{d_i}(E_{m_n\chi})(\al(nc_4)\cdot y)$$ is a linear identification H\"older close to the identity.
Moreover $\wedge^{d_i}(dh_{xy}^2|_{E_{m_n\chi}})$ is uniformly H\"older continuous in $(x,y)$.

On the other hand, since the cocycle $\wedge^{d_i}D\al(c_4)|_{E_{m_n\chi}}$ has coincide Lyapuonv exponents within $\wedge^{d_i}(E_{m_n\chi})$ with respect to the Bowen-Margulis measure $\mu$ for $\al(c_4)$, by Proposition \ref{prop: nonunif holo def} we know it admits non-uniformly stable and unstable Holonomies. The non-uniform unstable Holonomy of $\wedge^{d_i}D\al(c_4)|_{E_{m_n\chi}}$ is H\"older continuous along $(W\oplus W_1)_{loc}(x)$ for $\mu-$almost every $x$ by Proposition \ref{prop: nonunif holo def}. But by our discussion in last paragraph we know $\wedge^{d_i}D\al(c_4)|_{E_{m_n\chi}}$ actually admits \textbf{uniform} stable Holonomy along $W_2$ since $\wedge^{d_i}D\al(c_4)|_{E_{m_n\chi}}$ is uniform fiber-bunched as mentioned in last parapraph. Therefore the \textbf{non-uniform} stable holonomy (induced by Proposition \ref{prop: nonunif holo def} and equality of Lyapunov exponents) of $\wedge^{d_i}D\al(c_4)|_{E_{m_n\chi}}$ along $W_2$ coincides with the \textbf{uniform} stable Holonomy defined in \eqref{eqn: unif Holo of wedge di c4 mnchi}. In the rest of the proof of Lemma \ref{lemma: hold cont cEi} we do not distinguish the uniform and non-uniform stable Holonomy. In particular, the stable Holonomy is uniformly H\"older continuous in $(x,y)$. 

By Proposition \ref{prop: non unif inv prpl} any invariant measure $m$ on $\PP(\wedge^{d_i}(E_{m_n\chi}))$ is $\mu-$almost surely invariant under stable and unstable Holonomy. Notice that each $\cE_i$ can be identified with an invariant measure $m$ on the bundle $\PP(\wedge^{d_i}(E_{m_n\chi}))$ and the cocycle on $\wedge^{d_i}(E_{m_n\chi})$ induced by $D\al$ has coinciding Lyapunov exponents (with respect to the measure $\mu$). Therefore $\cE_i$ is $\mu-$almost surely invariant under stable and unstable Holonomy. 

Now we use the redefining argument as in the proof of Lemma \ref{lemma: product structure and continuity}. Notice that as in the proof of \ref{lemma: product structure and continuity}, the stable Holonomy is uniformly H\"older continuous and well-defined everywhere. And as in Lemma \ref{lemma: product structure and continuity} we only need to prove Lemma \ref{lemma: hold cont cEi} locally, i.e. for any $B(z)$ as in the proof of \ref{lemma: product structure and continuity}, there is a H\"older continuous bundle $\tcE_i$ defind on $B(z)$ such that $\tcE_i=\cE_i$ $\mu-$almost everywhere. 

Recall that $\mu$ is the Bowen-Margulis measure of $\al(c_4)$. By Proposition \ref{prop: nonunif holo def} and Proposition \ref{prop: non unif inv prpl} we know that there exists a $\mu-$full measure set $M_0$ such that restricted on $M_0$, $\cE_i$ is Holonomy invariant along $W_2$ (under $\wedge^{d_i}dh^2_{xy}|_{E_{m_n\chi}}$) 
 on the local $W_2$ leaves. And for any $x\in M_0$, the restriction on $(W\oplus W_1(x))\cap M_0$ of $\cE_i$ can be extended to be a H\"older continuous map along $W\oplus W_1(x)$ (Apply Proposition \ref{prop: nonunif holo def} and Proposition \ref{prop: non unif inv prpl} to cocycle $\wedge^{d_i}D\al(c_4)|_{E_{m_n\chi}}$ and the invariant measure $m$ on $\PP(\wedge^{d_i}(E_{m_n\chi}))$ corresponding to the subspace $\cE_i$).

Then we could find a point $z_0$ and a full $\mu^{W\oplus W_1}-$measure set $\cR(z_0)$ satisfies the same conditions as in the proof of Lemma \ref{lemma: product structure and continuity}. The rest proof of existence for H\"older extension of $\cE_i$ is the same as that of Lemma \ref{lemma: product structure and continuity}, we only need to replace $dh^2$ there by $\wedge^{d_i}dh^2|_{E_{m_n\chi}}$ here.
\end{proof} 
 
By Lemma \ref{lemma: hold cont cEi} we know, up to a modification on a $\mu-$neglible set, $\cE_i$ has a H\"older continuous extension on $M$, which we still denote by $\cE_i$. Then we get a $D\al-$invariant H\"older continuous flag $$\{0\}=\cE_0\subsetneqq \cdots \subsetneqq \cE_l=E_{m_n\chi}$$ therefore $\cE_{i}/\cE_{i-1}$ are also H\"older continuous vector bundles over $M$. 

A priori the measurable conformal structure $\mC_i$ (i.e. an invariant angle function on $\cE_i/\cE_{i-1}$) on each $\cE_{i}/\cE_{i-1}$ induced by measurable Zimmer amenable reduction is not necessary to be  continuous. A remarkable fact is that each $\mC'_i$ can be identified with a $\PP(\wedge^{d_i-d_{i-1}}D\al(c_4)|_{\cE_{i}/\cE_{i-1}})-$invariant measure on the bundle $\PP(\wedge^{d_i-d_{i-1}}{\cE_{i}/\cE_{i-1}})$. If $c_4$ is chosen to be close enough to $L$, as previous discussion, the cocycle $\wedge^{d_i-d_{i-1}}D\al(c_4)|_{\cE_{i}/\cE_{i-1}}$ is uniformly fiber-bunched over $W_2$. Then it admits a \textbf{uniform} stable Holonomy on any local $W_2-$leaf: $\wedge^{d_i-d_{i-1}}(d\bar{h}_{xy}^2|_{\cE_{i}/\cE_{i-1}})$ which depends uniformly H\"older continuously  on $(x,y)$ on the same $W_2$ local leaf. Again by Proposition \ref{prop: nonunif holo def}, we know $\wedge^{d_i-d_{i-1}}D\al(c_4)|_{\cE_{i}/\cE_{i-1}}$ admits \textbf{non-uniform} unstable holonomy $\mu-$almost everywhere. In particular the unstable Holonomy is H\"older continuous along $(W\oplus W^1)_{loc}(x)$ for $\mu-$almost every $x$. Then by Proposition  \ref{prop: non unif inv prpl} we know $\mC_i$ is $\mu-$almost surely invariant 
under stable and unstable Holonomies. Therefore by mimicking the proofs of Lemmas \ref{lemma: product structure and continuity}, \ref{lemma: hold cont cEi},   we get that $\mC_i$ has an invariant H\"older continuous extensions on $M$, which we still denote by $\mC_i$. 

\textbf{Case 2}: Now we consider the case that that only a finite index subgroup $G_0$ of $G$ is contained in a standard subgroup $G_1$. Actually our argument for  Case 2 is basically similar to that of \cite{KS13}. The basic idea to construct corresponding continuous flags and continuous invariant conformal structures on the factor bundles for each flag. In general this requires to pass to a finite cover and take restriction to a finite index subgroup of $\ZZ^k$. For completeness we give a proof in the appendix. 
\end{proof}





\subsection{Closing lemma and polynomial deviation}\label{sec: cls lem}By discussions in previous sections (passing to a finite cover and taking restriction to a finite index subgroup of $\ZZ^k$ if necessary), we have a H\"older continuous $D\al$ invariant flag $\{\cE_i\}$ within $E_{m_n\chi}$, and a family of H\"older continuous invariant conformal structures $\{\mC_i\}$ on the bundles $\cE_i/\cE_{i-1}$.  

In this section we prove that $D\al$ has polynomial deviation within bundle $E_{m_n\chi}$. The first step is to  construct a H\"older continuous metric $\|\cdot\|_i$ within the bundle $\cE_i/\cE_{i-1}$ such that for any $v\in \cE_i/\cE_{i-1}$, for any $b\in \ZZ^k$, 
\begin{equation}\label{eqn: unif expand Ei-Ei-1}
\|D\al(b)\cdot v\|_i=e^{m_n\chi(b)}\cdot \|v\|_i
\end{equation}
The proof is  a closing-lemma argument which is essentially contained in section 3.2 of \cite{KS07} and in \cite{KalSpa}. Or cf. Lemma 7.4 \cite{DX17}, for completeness we give an outline here.

Consider the suspension $\tal$ on $\tM$ of $\al$, since all nontrivial elements of $\al$ are Anosov and $\al$ is TNS, $\al$ satisfying all assumptions of Proposition 2.3. in \cite{KS07}. We still denote the natural extension of $\cE_i/\cE_{i-1}$ on $\tM$ by $\cE_i/\cE_{i-1}$. 
As in \cite{KS07}, we fix a smooth background metric ${g}_0$ on $\cE_i/\cE_{i-1}$. Then ${g}_0$ induced a volume form ${\nu}_0$ on $\cE_i/\cE_{i-1}$, we defined a H\"older continuous function $q$ as the following: for any $x\in \tilde{M}, b\in \RR^k$,
\begin{equation}\label{eqn: def q clos}
q(x,b):=\mathrm{Jac}_{{\nu}_0}(D\tilde{\al}(b)_x|_{\cE_i/\cE_{i-1}})
\end{equation}

Since $\al$ is totally non-symplectic, we can choose a \textit{generic singular} element (i.e. it contained in exactly one Lyapunov hyperplane) (cf. \cite{KS07, KalSpa}) $a_0\in \RR^k$ in the Lyapunov hyperplane $L$ such that  $\tilde{\al}(ta_0), t\in \RR$ acts transitively on $\tilde{M}$. Our goal is to prove the following lemma:
\begin{lemma}\label{lemma: coh eqn closing}There is a H\"older continuous positive function ${\phi}$ on $\tilde{M}$ such that for any $x\in \tilde{M}$
\begin{equation}\label{equation: coho eqn gene sing a}
{\phi}(x)\cdot{\phi}(\tilde{\al}(a_0)\cdot x)^{-1}=q(x,a_0)
\end{equation}
\end{lemma}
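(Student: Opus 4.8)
The plan is to prove Lemma \ref{lemma: coh eqn closing} by a Livsic-type argument adapted to the partially hyperbolic element $\tilde\al(a_0)$, using the fact that this flow is transitive on $\tilde M$ and that the relevant cocycle data extends H\"older continuously along the coarse Lyapunov foliations. The key observation is that \eqref{equation: coho eqn gene sing a} is a cohomological equation over the (non-hyperbolic) homeomorphism $\tilde\al(a_0)$, but the Jacobian cocycle $q(x,a_0)$ has a special structure: the conformal structures $\mC_i$ are $\tilde\al$-invariant and $a_0$ lies on the Lyapunov hyperplane $L$, so the Lyapunov exponent of $\wedge^{\max}D\tilde\al(a_0)|_{\cE_i/\cE_{i-1}}$ vanishes with respect to every invariant measure. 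Hence the additive cocycle $\log q(\cdot, a_0)$ integrates to zero against every $\tilde\al(a_0)$-invariant ergodic measure.

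First I would verify the cocycle identity $\log q(x, a_0 + b) = \log q(\tilde\al(b)x, a_0) + \log q(x, b)$ and, from the vanishing of the top Lyapunov exponent of the cocycle $\wedge^{\max}D\tilde\al|_{\cE_i/\cE_{i-1}}$ on the hyperplane $L$ (which follows from \eqref{eqn: vanished LE on L} of Lemma \ref{lemma:sub exp grwth on plane} applied to the wedge cocycle, or directly from the fact that the measurable conformal structure makes the cocycle a scalar times an orthogonal matrix so its Jacobian equals the appropriate power of the scalar, whose exponent is $m_n\chi(a_0) = 0$ along $L$), deduce $\int_{\tilde M} \log q(x, a_0)\, d\mu = 0$ for every $\tilde\al(a_0)$-invariant ergodic probability measure $\mu$. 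Combined with the subexponential growth estimate \eqref{eqn: sub exp grwth}, which gives $\|\log q(x, na_0)\| = o(n)$ uniformly, this puts us in a position to invoke the Livsic-type periodic orbit obstruction criterion. Since $\tilde\al(ta_0)$ is transitive and the cocycle has subexponentially small Birkhoff sums, the relevant closing lemma (Proposition 2.3 of \cite{KS07}, or the closing lemma machinery in \cite{KalSpa}) produces a H\"older continuous transfer function $\phi$ solving \eqref{equation: coho eqn gene sing a}. Positivity of $\phi$ is automatic once it is obtained as the limit of the Livsic cocycle sums, which are exponentials of real numbers; alternatively one sets $\phi = e^\psi$ where $\psi$ solves the additive equation.

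More concretely, the standard route is: (i) choose a point $x_0$ with dense $\tilde\al(\RR a_0)$-orbit; (ii) define $\phi$ along the orbit of $x_0$ by the partial products of $q(\cdot, a_0)$, normalized so $\phi(x_0) = 1$; (iii) show this is uniformly H\"older continuous on the orbit using the subexponential bound \eqref{eqn: sub exp grwth} together with the local product structure and the uniform H\"older dependence of $q$ along stable and unstable leaves of $\tilde\al(a_0)$ — here the key point is that for two nearby points on the same orbit segment, their forward or backward orbits under $a_0$ contract along the coarse Lyapunov directions controlled by regular elements near $a_0$, while $q$ itself varies sub-exponentially along $a_0$; (iv) extend $\phi$ by uniform continuity to all of $\tilde M$ and check the functional equation passes to the limit. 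Finally, restricting $\phi$ to a transversal copy of $M$ in $\tilde M$ yields the desired function for the original $\ZZ^k$ action.

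The main obstacle I expect is step (iii): establishing the uniform H\"older estimate for the transfer function along orbit segments. This is exactly where the non-uniform versus uniform tension shows up — a priori one only controls Birkhoff sums of $\log q$ non-uniformly, and the subexponential estimate \eqref{eqn: sub exp grwth} is precisely the tool that upgrades this to the uniform bound needed to run the closing-lemma argument without loss of H\"older regularity. One must carefully combine the local product structure of $\tilde M$ with respect to the (strong) stable and unstable foliations of $\tilde\al(a_0)$, the H\"older continuity of the coarse Lyapunov distributions and of $q$, and the subexponential growth along $L$, to show that the "cocycle distortion" between two orbit points at distance $\delta$ is $O(\delta^\gamma)$ for some $\gamma>0$; the point where this could fail is controlling the growth of $q$ along the central direction tangent to $E$, which is handled precisely by Lemma \ref{lemma:sub exp grwth on plane}. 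Once this uniform H\"older estimate is in hand, the rest is the now-classical Livsic argument, essentially as in \cite{KS07, KalSpa, DX17}.
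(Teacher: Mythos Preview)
Your proposal is correct and follows essentially the same Livsic-type route as the paper: define $\phi$ along a dense $\tilde\al(ta_0)$-orbit via partial products of $q$, then show it extends H\"older continuously by bounding $|\log q(x,ta_0)|$ at approximate return times. The paper makes your step (iii) precise via a specific closing lemma (Lemma~3.6 of \cite{KS07}, not Proposition~2.3, which concerns transitivity) producing a nearby point $y$ with $\tilde\al(a+\delta)\cdot y \in W(y)$, then passes to an actual $\tilde\al(b)$-fixed point $y_\ast$ obtained as an accumulation point of $y$ under an auxiliary Anosov element contracting $W$; at $y_\ast$ the subexponential estimate (or the Lyapunov exponent at a fixed point when $b\notin\mathcal L$) forces $|\log q(y_\ast,b)|\lesssim\epsilon$, and standard H\"older estimates along exponentially close orbits transfer this back to $|\log q(x,a)|\lesssim\epsilon^\beta$.
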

We give a proof of Lemma \ref{lemma: coh eqn closing} in the appendix. Now we claim that for any $x\in \tilde{M}, b\in \RR^k$,
\begin{equation}\label{equation: coho eqn all b}
{\phi}(x)\cdot{\phi}(\tilde{\al}(b)\cdot x)^{-1}=e^{-d\chi(b)}q(x,b)
\end{equation}
where $\chi$ is the Lypunov functional corresponding to $L$ and $d$ is the dimension of $\cE_i/\cE_{i-1}$. In fact, we can define a H\"older continuous volume form on $\cE_i/\cE_{i-1}$ by ${\nu}:={\phi}\cdot {\nu}_0$. By \eqref{equation: coho eqn gene sing a}, ${\nu}$ is invariant under the action of $\tilde{\al}(ta_0), t\in \RR$. By commutativity, $\tilde{\al}(b)_\ast {\nu}=\psi\cdot {\nu}$ is also an $\tilde{\al}(ta_0)-$invariant volume form on $\cE_i/\cE_{i-1}$. Since $\al(ta)$ acts transitively on $\tilde{M}$, $\psi$ is a constant function. Notice that $b$ acts on $\cE_i/\cE_{i-1}$ with Lyapunov exponents $\chi(b)$ (since we already proved that $\al$'s action is uniformly quasiconformal on $\cE_i/\cE_{i-1}$), by constancy of $\psi$ we get \eqref{equation: coho eqn all b}. Therefore, combining the volume form $\nu$ with the conformal structure we got in section \ref{subsec: cont zimm} on $\cE_i/\cE_{i-1}$, we easily get the H\"older continuous metric we need.



As a corollary, by a H\"older continuous change of coordinates, for any $b\in \ZZ^k$ the cocycle $D\al(b)|_{E_{m_n\chi}}$ has the form
$$\begin{pmatrix}
e^{m_n\chi(b)}\cdot O_1&\ast&\cdots&\ast\\
&e^{m_n\chi(b)}\cdot O_2&\cdots&\ast\\
&&\ddots&\vdots\\
&&&e^{m_n\chi(b)}\cdot O_l
\end{pmatrix}
$$where $O_i$ are orthogonal matrices and all the "$\ast$" terms are uniformly bounded. Therefore by following lemma in linear algebra we can prove the \textbf{polynomial deviation} for $D\al$ within bundle $E_{m_n\chi}$.

\begin{lemma}Suppose $\{A_k=B_k+C_k, k\geq 0\}$ is a sequences of block upper triangular square matrices (with the same form for each $k$), $B_k$ and $C_k$ are the diagonal and off-diagonal part of $A_k$ respectively. If $\|C_k\|$ are uniformly bounded and $B_k$ are orthogonal matrices, then there exist $C>0$ and $n\in \ZZ^+$ such that for any $m\in \ZZ^+$, 
$$\|A_m\cdots A_2A_1 \|\leq Cm^n, \|(A_m\cdots A_2A_1 )^{-1}\|\leq Cm^n$$
\end{lemma}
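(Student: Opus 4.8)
The plan is to reduce the statement to a bound on products of block-unipotent matrices, since the orthogonal diagonal parts contribute nothing to the growth of norms. First I would write $A_k = B_k(\mathrm{Id} + B_k^{-1} C_k) =: B_k U_k$, where $U_k$ is block upper triangular with identity blocks on the diagonal and off-diagonal blocks of uniformly bounded norm (here I use that $B_k$ is orthogonal, so $\|B_k^{-1}C_k\| = \|C_k\| \le R$ for some uniform $R$). Then I would collect the diagonal factors by commuting them to one side: setting $Q_k := B_k B_{k-1}\cdots B_1$ (an orthogonal matrix), one checks inductively that
\[
A_m A_{m-1}\cdots A_1 = Q_m \cdot \widetilde U_m \widetilde U_{m-1}\cdots \widetilde U_1,
\]
where $\widetilde U_j := Q_{j-1}^{-1} U_j Q_{j-1}$ is again block upper triangular with identity diagonal blocks (conjugation by a block-diagonal orthogonal matrix preserves the block-triangular shape and the identity diagonal), and $\|\widetilde U_j\| \le \|U_j\| \le 1 + R$, with off-diagonal part still bounded by $R$. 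Since $Q_m$ is orthogonal it does not affect the operator norm, so it suffices to bound $\|\widetilde U_m \cdots \widetilde U_1\|$.

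Next I would prove the purely unipotent estimate: if $\{V_k\}$ is any sequence of block upper triangular matrices with identity diagonal blocks and strictly-upper-triangular part bounded in norm by $R$, and there are $l$ diagonal blocks, then $\|V_m \cdots V_1\| \le C m^{l-1}$ with $C = C(R,l)$. The natural way is induction on the number of blocks $l$, or equivalently on the nilpotency class: write each $V_k = \mathrm{Id} + N_k$ with $N_k$ strictly block-upper-triangular, $\|N_k\|\le R$, and expand the product $\prod_{k=1}^m (\mathrm{Id}+N_k)$ into a sum over increasing subsequences $1\le k_1 < k_2 < \cdots < k_r \le m$ of the terms $N_{k_r}\cdots N_{k_1}$. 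Because any product of $l$ or more strictly block-upper-triangular matrices (for an $l$-block partition) vanishes, only subsequences of length $r \le l-1$ survive, and the number of such subsequences is $\sum_{r=0}^{l-1}\binom{m}{r} \le C(l) m^{l-1}$, while each surviving term has norm at most $R^{l-1}$. This gives $\|V_m\cdots V_1\| \le C(R,l)\, m^{l-1}$, which is the required polynomial bound with exponent $n = l-1$.

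Finally, for the inverse, note that $(A_m\cdots A_1)^{-1} = A_1^{-1}\cdots A_m^{-1}$, and each $A_k^{-1} = U_k^{-1} B_k^{-1}$ has exactly the same structure: $B_k^{-1}$ is orthogonal, and $U_k^{-1} = \mathrm{Id} - N_k + N_k^2 - \cdots$ is again block upper triangular with identity diagonal blocks and off-diagonal part bounded by $R + R^2 + \cdots + R^{l-1} =: R'$ (a finite sum since $N_k^l = 0$). So the identical argument, applied to the sequence $\{A_{m-j+1}^{-1}\}_{j=1}^m$, yields $\|(A_m\cdots A_1)^{-1}\| \le C(R',l)\, m^{l-1}$. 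Taking $C$ to be the maximum of the two constants and $n = l-1$ completes the proof. The only mildly delicate point is the bookkeeping in the conjugation step that shows $\widetilde U_j$ retains identity diagonal blocks and a uniformly bounded off-diagonal part; everything else is the standard "unipotent matrices grow polynomially" estimate.
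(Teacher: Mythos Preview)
Your proof is correct and uses essentially the same idea as the paper: expand the product and use that strictly block-upper-triangular factors are nilpotent of order $l$, so only terms with at most $l-1$ such factors survive, giving $O(m^{l-1})$ terms each of bounded norm. The paper does this slightly more directly by expanding $(B_m+C_m)\cdots(B_1+C_1)$ without first factoring out the orthogonal parts---noting that any term containing at least $l$ of the $C_k$'s vanishes (since the block-diagonal $B_k$'s preserve the flag while each $C_k$ shifts it up one level), and that the orthogonal $B_k$'s contribute norm $1$---whereas you first reduce to a product of block-unipotent matrices via the conjugation $A_m\cdots A_1 = Q_m\widetilde U_m\cdots\widetilde U_1$. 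Your route is a touch longer but arguably cleaner, and the inverse case is handled identically in both.
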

\begin{proof}Consider the full expansion of $$A_m\cdots A_2A_1=(B_m+C_m)\cdots (B_1+C_1)$$ There are $2^m$ terms, but by the form of $A_k$ we know there exists $n\in \ZZ^+$ (in fact $n$ could be chosen as the size of $A_k$) such that  any term in the sum which contains $n+1$ $C_m$ should be $0$. Therefore $$\|A_m\cdots A_2A_1 \|\leq \binom{m}{n}\cdot \sup_{k\geq 1}\|C_k\|^n=O(m^n)$$
The same argument also works for $\|(A_m\cdots A_2A_1 )^{-1}\|$ since $A_k^{-1}$ has the same form as $A_k$ and the off-diagonal parts of $A_k^{-1}, k\geq 1$ are uniformly bounded as well.
\end{proof}

In summary, we prove the H\"older continuity of $E_{m_n\chi}$ and get the \emph{standard form} within $E_{m_n\chi}$. In addition, the cocycle $D\al|_{E_{m_n\chi}}$ has polynomial deviation.

\subsection{Cone criterion, H\"older continuity of Oseledec splitting and proof of Theorem \ref{thm: osl spltt cont}}Now we consider the bundles $$(E_{m_k\chi}\oplus \cdots \oplus E_{m_n\chi})/(E_{m_{k+1}\chi}\oplus \cdots \oplus E_{m_n\chi})$$ for $1\leq k\leq n-1$. By Lemma \ref{lemma: product structure and continuity} we know the restriction of $D\al$ on $(E_{m_k\chi}\oplus \cdots \oplus E_{m_n\chi})/(E_{m_{k+1}\chi}\oplus \cdots \oplus E_{m_n\chi})$ is a H\"older continuous cocycle with coinciding Lyapunov exponents. Therefore, mimicking the proof of Lemma \ref{lemma: hold cont cEi} and discussion in section \ref{sec: cls lem} we could get similar standard form in $(E_{m_k\chi}\oplus \cdots \oplus E_{m_n\chi})/(E_{m_{k+1}\chi}\oplus \cdots \oplus E_{m_n\chi})$ (passing to a finite cover and taking a restriction to a finite index subgroup of $\ZZ^k$ if necessary). In particular the cocycle $D\al$ on $(E_{m_k\chi}\oplus \cdots \oplus E_{m_n\chi})/(E_{m_{k+1}\chi}\oplus \cdots \oplus E_{m_n\chi})$ has polynomial deviation and the associated Lyapunov exponents are $m_k\chi(\cdot)$.

Choose $b\in \ZZ$ such that $\chi(b)<0$, and consider the cocycle $D\al(b)$ on the bundle $E_{m_{n-1}\chi}\oplus E_{m_{n}\chi}$, notice that the speed of the action of $D\al(b)$ on $$E_{m_{n-1}\chi}\oplus E_{m_{n}\chi}/E_{m_{n}\chi}$$ is uniformly exponentially faster (thanks to the standard form) than that on $E_{m_n\chi}$, then by cone criterion in section 2. of \cite{CP} and the proof of Lemma 5.3 of \cite{DX}, we will prove that there is a dominated splitting within $E_{m_{n-1}\chi}\oplus E_{m_{n}\chi}$ and one subspace is $E_{m_n\chi}$.

The first step is the following lemma which implies that there is a invariant cone field within $E_{m_{n-1}\chi}\oplus E_{m_n\chi}$. 
\begin{lemma}\label{lemma: app cone}
Suppose $f:X\to X$ is a homeomorphism on a compact metric space, $E$ is a (normed) vector bundle over $X$ and $F$ is a continuous (invertible) linear cocycle $E\to E$ over $f$. We assume that there is a continuous splitting of $E=E_1\oplus E_2$ where $E_1$ is $F-$invariant. Denote by $p_i,i=1,2$ the canonical linear projection from $E$ onto $E_i$. Suppose that $$\inf_{x\in M, v_2\in E_2, v_1\in E_1, \|v_2\|=\|v_1\|=1}\frac{\|p_2( F\cdot v_2)\|}{\|F_1\cdot v_1\|}>\lambda>1$$
Then there exist $ \gamma>0, \epsilon\in (0,1)$  such that for the cone field 
$$\C_\gamma:=\{(v,u)\in E_1\oplus E_2, \|v\| \leq \gamma\cdot \|u\| \}$$
we have $$F_x\cdot \C_\gamma(x)\subset \C_{(1-\epsilon)\gamma}(f(x)).$$  
\end{lemma}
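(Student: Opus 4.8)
The plan is to track the image of a cone vector componentwise with respect to the continuous splitting $E=E_1\oplus E_2$, using that $E_1$ is $F$-invariant, and to make every constant uniform by appealing to compactness of $X$. Concretely, I would first fix the finite quantities $M:=\sup_{x}\|F_x\|$ and $P:=\sup_{x}\|p_1\|$ (the fibrewise operator norm of the projection onto $E_1$, finite since the splitting is continuous and $X$ compact), together with $\nu_x:=\|F_x|_{E_1}\|$. Because $F$ is invertible and $E_1$ is $F$-invariant one has $F_x(E_{1,x})=E_{1,f(x)}$, so $F_x|_{E_1}$ is a fibrewise linear isomorphism; continuity of $F^{-1}$ and compactness of $X$ then give a uniform lower bound $\nu_x\ge\big(\sup_{y}\|F_y^{-1}\|\big)^{-1}=:c_0>0$.

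Next I would reformulate the hypothesis pointwise: taking in each fibre the supremum over unit $v_1\in E_1$ and the infimum over unit $v_2\in E_2$, the assumed inequality on the ratio $\|p_2(Fv_2)\|/\|F_1v_1\|$ becomes
\[
\|p_2(F_xv_2)\|\ \ge\ \lambda\,\nu_x\,\|v_2\|\qquad\text{for all }v_2\in E_{2,x},
\]
while $\|F_xv_1\|\le\nu_x\|v_1\|$ holds trivially for $v_1\in E_{1,x}$. Now given $(v,u)\in\C_\gamma(x)$, so $\|v\|\le\gamma\|u\|$ (and $u\neq 0$ unless the vector is zero), the $F$-invariance of $E_1$ lets me split $F_x(v,u)=\big(F_xv+p_1(F_xu)\big)\oplus p_2(F_xu)$ with the first summand in $E_{1,f(x)}$ and the second in $E_{2,f(x)}$, and then
\[
\|F_xv+p_1(F_xu)\|\le\nu_x\|v\|+PM\|u\|\le(\gamma\nu_x+PM)\|u\|,\qquad \|p_2(F_xu)\|\ge\lambda\nu_x\|u\|.
\]
Hence the $E_1$-to-$E_2$ ratio of $F_x(v,u)$ is at most $(\gamma\nu_x+PM)/(\lambda\nu_x)=\gamma/\lambda+PM/(\lambda\nu_x)\le\gamma/\lambda+PM/(\lambda c_0)$.

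Finally, since $\lambda>1$ I would fix any $\epsilon\in(0,\,1-1/\lambda)$ and then choose $\gamma$ large enough that $PM/(\lambda c_0)\le(1-\epsilon-1/\lambda)\gamma$, e.g. $\gamma:=PM/\big(\lambda c_0(1-\epsilon-1/\lambda)\big)$; with this choice $\gamma/\lambda+PM/(\lambda c_0)\le(1-\epsilon)\gamma$, so $F_x(v,u)\in\C_{(1-\epsilon)\gamma}(f(x))$, which is the asserted inclusion. The argument is elementary; the only genuinely delicate point I anticipate is securing the uniform positive lower bound $c_0$ for $\|F_x|_{E_1}\|$ — this is exactly where invertibility of the cocycle and compactness of the base enter, since without it the correction term $PM/(\lambda\nu_x)$ need not be uniformly bounded — together with the trivial bookkeeping for the degenerate case $u=0$.
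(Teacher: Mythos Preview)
Your proposal is correct and follows essentially the same route as the paper's proof: write $F$ in block upper-triangular form with respect to $E_1\oplus E_2$, bound the $E_1$-component of the image by $\nu_x\|v\|+\text{(off-diagonal)}\|u\|$, bound the $E_2$-component below by $\lambda\nu_x\|u\|$ via the hypothesis, and then absorb the off-diagonal contribution by taking $\gamma$ large. Your treatment is in fact slightly more careful than the paper's, since you make explicit the uniform lower bound $c_0$ on $\nu_x$ (from invertibility of the cocycle and compactness of $X$) that guarantees the correction term $PM/(\lambda\nu_x)$ is uniformly bounded; the paper leaves this implicit.
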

\begin{proof}
Suppose that $(u_x,v_x)\in E_2(x)\oplus E_1(x)$, then we have the following matrix form for $F$, $$(u_{f(x)},v_{f(x)})=F_x\cdot (u_x,v_x)^t=\begin{pmatrix}
A(x)&\\C(x) &D(x)
\end{pmatrix}\cdot (u_x,v_x)^t$$
where $$A(x)= p_2\circ F_x|_{E_2(x)}), C(x)= p_1\circ F_x|_{E_2(x)}, D(x):=F_x|_{E_1(x)}.$$ 
Moreover by the assumption of Lemma \ref{lemma: app cone} we know $$\lambda\cdot \|D(x)\|<\|A(x)^{-1}\|^{-1}=\min_{u\in E_2(x), \|u\|=1}\|A(x)\cdot u\| $$
Then for $\gamma$ sufficiently large $\epsilon$ sufficiently small, if $\|v_x\|\leq \gamma\cdot \|u_x\| $ we have that 
\begin{eqnarray*}
\|v_{f(x)}\|&\leq& \|C(x)\|\cdot \|u_x\|+\|D(x)\|\cdot \|u_x\|\cdot \gamma\\
&\leq& \|C(x)\|\cdot \|A(x)\|^{-1}\cdot \|u_{f(x)}\|+\gamma \cdot \|D(x)\|\cdot \|A(x)\|^{-1}\cdot \|u_{f(x)}\|\\
&\leq& (\lambda^{-1}\gamma+\|C(x)\|\cdot \|A(x)^{-1}\|)\cdot \|u_{f(x)}\|\\
&\leq & (1-\epsilon)\gamma\cdot \|u_{f(x)}\| \text{ for $\gamma$ sufficiently large and $\epsilon $ sufficiently small}\end{eqnarray*}\end{proof}
Coming back to the discussion for the cocycle $D\al(b)$ on $E_{m_{n-1}\chi}\oplus E_{m_n\chi}$, where $\chi(b)<0$. Consider the inner product on $E_{m_{n-1}\chi}\oplus E_{m_n\chi}$ by the metric on $M$ and denote the orthogonal complement $(E_{m_n\chi})^{\perp}$ of $E_{m_n\chi}$ in $E_{m_{n-1}\chi}\oplus E_{m_n\chi}$. Notice that since $m_{n-1}<m_n$, the cocycle induced by $D\al(b)$ on $E_{m_{n-1}\chi}\oplus E_{m_n\chi}/E_{m_n\chi}$ has growth uniformly exponentially faster than that on $E_{m_n\chi}$, therefore for  $l$ large enough, we could apply Lemma \ref{lemma: app cone} with $$f=\al(lb), E=E_{m_{n-1}\chi}\oplus E_{m_n\chi}, F=D\al(lb)|_{E_{m_{n-1}\chi}\oplus E_{m_n\chi}}, E_1= E_{m_n\chi}, E_2= (E_{m_n\chi})^{\perp}$$
Then by classical cone criterion in the theory of hyperbolic dynamics (cf. \cite{CP}) we know there is a unique $D\al(b)-$invariant distribution within $E_{m_{n-1}\chi}\oplus E_{m_n\chi}$ has the same dimension as $E_{m_{n-1}\chi}$ and uniformly transverse to $E_{m_n\chi}$, and the associated splitting is dominated. Therefore the new invariant bundle we got should be almost surely coinciding with the measurable invariant bundle $E_{m_{n-1}\chi}$, which is the result we need.


By the theory of dominated splitting, $E_{m_{n-1}\chi}$ is H\"older continuous, cf. \cite{CP}. Moreover $D\al$ has coincide Lyapunov exponents within $E_{m_{n-1}\chi}$, therefore by repeating the discussion for $E_{m_n\chi}$ we can prove that $D\al$ has a standard form within $E_{m_{n-1}\chi}$ (passing to a finite cover and taking a restriction to a finite index subgroup of $\ZZ^k$ if necessary). Repeating the argument above we could prove the H\"older continuity of each $E_{m_k\chi}$ and within each $E_{m_k\chi}$, $D\al$ preserves a standard form (passing to a finite cover and taking a restriction to a finite index subgroup of $\ZZ^k$ if necessary), which completes the proof of Theorem \ref{thm: osl spltt cont}.

\section{The pairwise jointly integrable case and the Lyapunov pinching case }\label{sec: JointlyIntegrable}

\subsection{ Result of Brin-Manning and the proof of Theorem \ref{main Lyapunov pinching}}Recall that in \cite{BM}, Brin and Manning proved that an Anosov diffeomorphism $f$ is topologically conjugate to an affine automorphism on an infranilmanifold if $f$ satisfying certain ``pinching" condition for the \emph{Mather spectrum} (for precise definition and more details cf. \cite{BM}, \cite{HPS}, etc.), i.e. if the Mather spectrum of $f$ is contained in the union of two annuli with radii $0<r_1<r_2<1$ and $1<R_2<R_1<\infty$ where 
\begin{equation}\label{eqn: pinched M spec}
1+\frac{\ln R_2}{\ln R_1}>\frac{\ln r_1}{\ln r_2} ~~\text{or}~~1+\frac{\ln r_2}{\ln r_1}>\frac{\ln R_1}{\ln R_2}
\end{equation}
By Theorem \ref{thm: osl spltt cont} and definition of Lyapunov pinching condition we know, if $\al$ is Lyapunov pinching then there is an element $a$ such that the Mather spectrum of $\al(a)$ satisfies \eqref{eqn: pinched M spec} \footnote{By polynomial deviation property in Theorem \ref{thm: osl spltt cont}, there exist a finite cover $\bar M$ of $M$, $N\in \ZZ^+$ such that the Mather spectrum of the lift $\bal(Na)$ on $\bar M$ satisfies \eqref{eqn: pinched M spec}. Notice that the covering map is a local isometry so the Mather spectrum of $\al(Na)$ also satisfies \eqref{eqn: pinched M spec}.   }. Therefore by results of Brin and Manning we know $\al(a)$ is topologically conjugate to an affine action on an infranilmanifold. Therefore the ambient action of $\al$ is topologically conjugate to a $\ZZ^k-$action $\rho$ by affine actions on an infranilmanifold as well. Notice that $\al$ is TNS, then by Lemma 4.1 of \cite{DX} we know $\rho$ is TNS as well and $\rho$ can not has a rank-one factor. Therefore by Corollary 1.2 in  \cite{HW}, $M$ should be standard infranilmanifold. As a result, by global rigidity of higher rank smooth Anosov action on infranilmanifold \cite{FKS, HW} we know $\al$ is smoothly conjugate to the algebraic action $\rho$.

\subsection{Polynomial global product structure and the proof of Theorem \ref{main toral case}\color{black}}
Recall that two foliations $\cF_1,\cF_2$ have \textit{global product structure} if every leaf $\cF_1(x)$ intersects every leaf $\cF_2(y)$ in a unique point $[x,y]$. Moreover, for foliations $\cF_1$ and $\cF_2$ have global product structure, we say they have \textit{polynomial global product structure} if there is a polynomial
$p$ such that $$d_{\cF_1}(x, [x, y])+d_{\cF_2}(y, [x, y]) < p(d(x, y))$$ for all $x$ and $y$, where $d_{\cF_i}$ are the Riemannian distances along leaves and $d$ is the Riemannian distance on the ambient manifold. 

An Anosov diffeomorphism is called \textit{has (polynomial) global product structure} if the stable and unstable foliations have (polynomial) global product structure on the universal cover.

In \cite{Ha}, Hammerlindl proved that an Anosov diffeomorphism is topologically conjugate to an infranilmanifold automorphism if it has the polynomial global product structure. The main ingredients appear
in the work of Brin and Manning \cite{B78}, \cite{BM}. 

\begin{proof}[Proof of Theorem \ref{main toral case}] we will prove in this section that any Anosov element of $\al$ has global polynomial  product structure. Therefore by \cite{Ha} the ambient action of \color{black} $\al$ is topologically conjugate to an $\ZZ^k-$action $\rho$ by affine actions on infranilmanifold. Then  as the proof of Theorem \ref{main Lyapunov pinching}, by \cite{HW}, $\al$ is smoothly conjugate to $\rho$.

We fix an Anosov element $f$ of $\al$ on $M$, the first step is to prove that $f$ has a global product structure. 

Denote $[0,1]$ by $I$. Brin's proof is basically as the following, in \cite{B77} he called the \textit{corresponding mapping} (or "holonomy" in the language of our paper) of stable and unstable manifolds $W^s=W^s(f), W^u=W^u(f)$ \textit{can be infinitely extended} if for every three points $x\in M, y\in W^s(x), z\in  W^u(x)$, there exists a continuous mapping $g$ from unit square $I^2$ into $M$ such that 
\begin{enumerate}
\item $g(0,0)=x, g(0,1)=y, g(1,0)=z$.
\item $g(t,\cdot)$ is a continuous curve on a stable leaf for every fixed $t\in I$.
\item $g(\cdot,t)$ is a continuous curve on a unstable leaf for every fixed $t\in I$.
\end{enumerate}
Brin proved that under pinching condition \eqref{eqn: pinched M spec}, the corresponding mapping of stable and unstable foliations can be infinitely extended. Moreover Brin showed that for any Anosov diffeomorphism such that the corresponding mapping of stable and unstable foliations can be infinitely extended has a global product structure. 

We claim that the corresponding mapping  of stable and unstable foliations of $f$ can be infinitely extended. The proof of is not hard. The following lemma is a corollary of topological joint integrability of coarse Lyapunov foliations.
\begin{lemma}\label{lemma: prod stru on wij}
For any pair of coarse Lyapunov foliations $W_1, W_2$, if we denote by $W$ the integrable foliation of $W_1$ and $W_2$, then the leaves of $W_1$, $W_2$ have global product structure on each leaf of $W$. Moreover $W$ has uniformly $C^\infty-$leaves which tangent to $TW_1\oplus TW_2$.
\end{lemma}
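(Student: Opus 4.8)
The plan is to pass to the level of a single stable leaf of a well-chosen Anosov element, where the action is contracting, and there globalise the local product structure; the regularity is then upgraded using the standard form of Theorem~\ref{thm: osl spltt cont}. Throughout we are under the standing hypothesis of Theorem~\ref{main toral case}, so $W_1$ and $W_2$ are topologically jointly integrable and $W$ is the topological foliation giving them a local product structure; replacing $M$ by the finite cover of Theorem~\ref{thm: osl spltt cont} if necessary (the covering map is a local isometry, so every assertion descends), we may freely invoke its conclusions. Write $\chi_1,\chi_2$ for the Lyapunov functionals of $W_1,W_2$. By the TNS condition together with Proposition~\ref{prop: properties suspen} there is an Anosov element $a\in\ZZ^k$ with $TW_1,TW_2\subset E^s_a$; then every leaf of $W_1$, of $W_2$, and hence of $W$, is contained in a single leaf $W^s_a(\cdot)$, which is a $C^\infty$ submanifold, $C^\infty$--diffeomorphic to Euclidean space, on which $\al(na)$ acts as a uniformly contracting $C^\infty$ diffeomorphism; moreover, by the polynomial deviation part of Theorem~\ref{thm: osl spltt cont}, the leaf metrics of $W_1$ and $W_2$ are contracted by $\al(na)$ at a rate $\le C(n+1)^m e^{n\chi_i(a)}\to 0$.

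To establish the global product structure on a leaf $W(x)$, let $L_1=W_1(p)$ and $L_2=W_2(q)$ be a $W_1$-- and a $W_2$--leaf, both contained in $W(x)$. Pick $n$ so large that $\al(na)p$ and $\al(na)q$ lie in a $W^s_a$--ball of radius less than the size of the local product structure, set $[\al(na)p,\al(na)q]:=W_1(\al(na)p)\cap W_2(\al(na)q)$ by the local product map, and define $[p,q]:=\al(-na)[\al(na)p,\al(na)q]$. Since $\al(a)$ permutes the leaves of $W_1$ and of $W_2$, the point $[p,q]$ lies in $W_1(p)\cap W_2(q)\subset W(x)$, so $L_1\cap L_2\neq\varnothing$. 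For uniqueness, if $p'\in W_1(p)\cap W_2(q)$ then, again by polynomial deviation, for $n$ large the $W_1$-- and $W_2$--leaf distances from $\al(na)p$ and $\al(na)q$ to $\al(na)p'$ tend to $0$, so $\al(na)p'$ and $\al(na)[p,q]$ lie in a common local product chart; injectivity of the local product map forces $\al(na)p'=\al(na)[p,q]$, hence $p'=[p,q]$. In particular $W_1(p)\cap W_2(q)$ is a single point and $[p,q]$ is independent of $n$. Fixing base points on $W_1(x)$ and $W_2(x)$, the map $(p,q)\mapsto W_2(p)\cap W_1(q)$, for $p\in W_1(x)$ and $q\in W_2(x)$, is a bijection $W_1(x)\times W_2(x)\to W(x)$, and since the limits above are uniform on compacta it is a homeomorphism --- the asserted global product structure.

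It remains to upgrade $W(x)$ to a $C^\infty$ leaf tangent to $TW_1\oplus TW_2$, uniformly in $x$. By the TNS condition pick a regular $c\in\ZZ^k$ with $TW_1,TW_2\subset E^s_c$; by the polynomial deviation of Theorem~\ref{thm: osl spltt cont} we may replace $c$ by a large multiple so that, to every prescribed order, the fiber--bunching inequalities of the type \eqref{eqn:FiberBu} hold for $D\al(c)|_{TW_1}$ relative to the contraction along $W_2$ and for $D\al(c)|_{TW_2}$ relative to that along $W_1$. Then, by the theory of Holonomies of fiber--bunched cocycles (Section~\ref{chap: spltt}, together with the well-known smoothness of a fast subfoliation inside a stable leaf, cf.\ \cite{FKS1}, \cite{KS07}), the holonomy along $W_2$ between $W_1$--leaves and the holonomy along $W_1$ between $W_2$--leaves, restricted to leaves of $W$, are $C^\infty$. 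A local product chart of $W$ is then $C^\infty$ along the $W_1$--direction (where it is such a $W_2$--holonomy) and along the $W_2$--direction, so Journ\'e's lemma makes it $C^\infty$; differentiating this chart along the two coordinate axes identifies the tangent bundle of $W(x)$ with $TW_1\oplus TW_2$. Uniformity of the $C^\infty$ bounds follows from compactness of $M$ and the uniformity of the estimates in Theorem~\ref{thm: osl spltt cont}.

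\textbf{The main obstacle} is the regularity statement: topological joint integrability by itself yields only \emph{continuous} local product charts, and making $W$ genuinely $C^\infty$ requires the higher--rank input --- the standard form and the polynomial deviation of Theorem~\ref{thm: osl spltt cont} --- both to arrange the fiber--bunching inequalities for suitable Anosov elements (via passing to large multiples) and to promote the almost everywhere smoothness of the relevant dynamically--defined subbundles along stable leaves to everywhere; none of this is available for a single Anosov diffeomorphism. The global product structure on the leaves, by contrast, is essentially routine once the uniform contraction is in hand.
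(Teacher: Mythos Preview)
The paper's own proof is a bare citation to \cite{KS06}; you are supplying the argument it omits. Your treatment of the global product structure --- contract with an Anosov element $a$ having $\chi_1(a),\chi_2(a)<0$ until the two points fall into a single local product chart, pull back, and use the same contraction for uniqueness --- is correct and is the standard proof (the one in \cite{KS06}).

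The regularity paragraph, however, has a genuine gap. The fiber-bunching inequality \eqref{eqn:FiberBu} for $D\al(c)|_{TW_i}$ relative to the contraction along $W_j$ --- which, as you correctly observe, does hold for large multiples of any regular $c$ with $\chi_1(c),\chi_2(c)<0$, thanks to polynomial deviation --- only makes the \emph{cocycle} Holonomy H\"older (Proposition~\ref{prop: properties holonomies}); via Lemma~\ref{lemma: Hlmy 2nd def} this identifies with $dh^{W_j}$, giving the foliation holonomy $C^{1+\beta}$ regularity, not $C^\infty$. The $C^\infty$ conclusion comes from the separate fact that a \emph{fast} subfoliation of a stable leaf is $C^\infty$, whose hypothesis is a domination inequality of the type in Proposition~\ref{prop: E12 splt}(3), not a bunching one. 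A single element $c$ cannot satisfy this domination in both directions at once: if $\al(c)$ contracts $W_2$ faster than $W_1$ then it does not contract $W_1$ faster than $W_2$. The fix is to use \emph{two} elements: one chosen near $\ker\chi_1$ on the side $\{\chi_1<0,\chi_2<0\}$ (so that, exactly as in the construction of $c_1$ in Proposition~\ref{prop: E12 splt} with $E=TW_1$, everything else in its stable bundle --- in particular $W_2$ --- is the fast part), and a second near $\ker\chi_2$ with the roles reversed. For the first element the fast-part holonomy between $W_1$-leaves is $C^\infty$, and since $W_2$ sits inside that fast part and $W_1$ is the slow complement, the $W_2$-holonomy between $W_1$-leaves agrees with it and is therefore $C^\infty$; symmetrically for the second element. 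After this your Journ\'e step is legitimate and yields uniformly $C^\infty$ leaves for $W$ tangent to $TW_1\oplus TW_2$.
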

\begin{proof}cf. \cite{KS06}. 
\end{proof}

\begin{coro}\label{coro: smooth Wi}Under the assumptions in Theorem \ref{main toral case}, all the coarse Lyapunov distributions are uniformly $C^\infty$.
\end{coro}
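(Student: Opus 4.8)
The plan is to show that each coarse Lyapunov distribution $E^{\chi}$ is, locally, the intersection of finitely many stable subbundles of suitably chosen Anosov elements, that along each of those stable foliations it is already uniformly $C^\infty$, and then to promote this to genuine smoothness on $M$ by a Journé-type argument, with the joint integrability hypothesis supplying the foliations along which everything glues.

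First I would fix a coarse Lyapunov functional $\chi$ and, using the TNS condition, choose an Anosov element $a$ with $\chi(a)<0$ which is regular and, in addition, lies off the finitely many hyperplanes $\ker(\chi_j-\chi_{j'})$, so that $E^s_a=\bigoplus_{\chi_j(a)<0}E^{\chi_j}$ and the numbers $\chi_j(a)$ occurring here are pairwise distinct. By Theorem~\ref{thm: osl spltt cont} the coarse Lyapunov distributions are H\"older continuous, $D\al$-invariant and defined on all of $M$, and on each summand $E^{\chi_j}$ the cocycle $D\al$ has the polynomial deviation property, so $D\al(a)|_{E^s_a}$ carries a uniform dominated splitting $\bigoplus_j E^{\chi_j}$ whose blocks have Mather spectrum a single circle of radius $e^{\chi_j(a)}$, these radii being distinct. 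Hence the $E^{\chi_j}$ are exactly the Mather spectral subbundles of $D\al(a)$ inside $E^s_a$, and the $C^\infty$ section theorem (equivalently, smoothness of the fast flags, Theorem~\ref{thm: r79}, together with the standard smoothness of spectral subbundles) gives that each $E^{\chi_j}$, in particular $E^{\chi}$, is uniformly $C^\infty$ along the leaves of $W^s_a$; that is, $W^{\chi}$ is a uniformly $C^\infty$ subfoliation of $W^s_a$.

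Next I would produce enough such directions to span $TM$. Since $\al$ is totally Anosov with a fully supported invariant measure, $E^{\chi}$ is a minimal intersection of stable distributions, so $E^{\chi}=\bigcap_{i=1}^{N}E^s_{a_i}$ for some Anosov elements $a_i$. By the TNS condition, for every $\chi_j$ not positively proportional to $-\chi$ there is an Anosov element with $\chi(a)<0$ and $\chi_j(a)<0$; appending all such elements to the family $\{a_i\}$ leaves the intersection equal to $E^{\chi}$ (each new $a$ has $\chi(a)<0$, hence $E^{\chi}\subset E^s_a$) while arranging that every coarse Lyapunov functional is negative on at least one $a_i$, so $\sum_i TW^s_{a_i}=TM$. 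By the previous step, $E^{\chi}$ is uniformly $C^\infty$ along each $W^s_{a_i}$; and by the hypothesis that coarse Lyapunov foliations are pairwise topologically jointly integrable, iterated as in Lemma~\ref{lemma: prod stru on wij} (cf. \cite{KS06}), every partial sum $\bigoplus_{j\in S}E^{\chi_j}$ is integrable with uniformly $C^\infty$ leaves, so the $W^s_{a_i}$ are pairwise jointly integrable. A Journé-type lemma, applied repeatedly by combining two jointly integrable foliations at a time and using the smoothness already known inside each, then upgrades ``uniformly $C^\infty$ along each $W^s_{a_i}$'' to ``uniformly $C^\infty$ on $M$'', which is the assertion of the corollary.

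The step I expect to be the main obstacle is this last Journé argument: all the foliations $W^s_{a_i}$ contain $W^{\chi}$, so no two of them are transverse, and one must run the iterated argument in the mildly non-transverse setting of foliations sharing a common smooth subfoliation, checking at each stage that the relevant quotient distributions are transverse inside the leaf being built and that the intermediate sums remain integrable. A secondary bookkeeping point is that invoking Theorem~\ref{thm: osl spltt cont} and Ruelle's theorem may force passing to a finite cover and a finite-index subgroup of $\ZZ^k$; since the covering map can be chosen to be a local isometry, this does not affect the conclusion for $\al$ itself, exactly as in the proof of Corollary~\ref{coro: one LE}.
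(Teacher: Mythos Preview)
Your argument has a genuine gap at the first step. You claim that for a generic Anosov element $a$ the coarse Lyapunov decomposition $E^s_a=\bigoplus_{\chi_j(a)<0}E^{\chi_j}$ is a dominated splitting whose blocks have Mather spectrum a single circle of radius $e^{\chi_j(a)}$, and hence that each $E^{\chi_j}$ is $C^\infty$ along $W^s_a$. But Theorem~\ref{thm: osl spltt cont} gives polynomial deviation only within each \emph{Oseledec} space $\bar E_\chi$, not within each coarse Lyapunov distribution. Under the hypotheses of Theorem~\ref{main toral case} (which do not include resonance-freeness) a coarse Lyapunov distribution $E^{\chi_j}$ may split further as $\bigoplus_{c>0}\bar E_{c\chi_j}$, so its Mather spectrum is the finite union of circles of radii $e^{c\chi_j(a)}$; these can interleave with the circles coming from another $E^{\chi_{j'}}$, and the coarse Lyapunov splitting of $E^s_a$ need not be dominated for the single map $\al(a)$. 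Even when the spectral annuli happen to be disjoint, only the fast \emph{flag} is $C^\infty$ along $W^s_a$: for the slow or intermediate subbundles the $C^r$ section theorem (your reference to Theorem~\ref{thm: r79} or Lemma~\ref{coro: coro cr section}) requires $k_x\alpha_x^r<1$, and since $\alpha_x=\|D\al(a)|_{E^s_a}^{-1}\|$ grows exponentially for any regular $a$, this yields only a finite $r$. Concretely, already with two stable exponents $-1$ and $-2$ one gets the slow direction at best $C^{1/2}$ along $W^s_a$, not $C^\infty$.

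The paper's proof is shorter and sidesteps this obstruction by never working inside a full stable foliation. For each ordered pair of coarse Lyapunov foliations $(W_1,W_2)$ it uses the subexponential estimate of Lemma~\ref{lemma:sub exp grwth on plane} together with the construction in Proposition~\ref{prop: E12 splt} to produce an Anosov element close to the Lyapunov hyperplane of $W_2$, so that $W_2$ is contracted only subexponentially while $W_1$ is contracted at a definite exponential rate. Inside the foliation $W_1\oplus W_2$ (which exists with uniformly $C^\infty$ leaves by Lemma~\ref{lemma: prod stru on wij}; this is exactly where the joint integrability hypothesis enters) $W_1$ is then the fast subfoliation, hence $C^\infty$, and $TW_1$ is uniformly $C^\infty$ along $W_2$. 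Since distinct coarse Lyapunov foliations are genuinely transverse, iterated Journ\'e applies directly, and the non-transversality obstacle you anticipated never arises.
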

\begin{proof}By Journ\'e lemma \cite{Journe}, we only need prove that for any pair of coarse Lyapunov foliations $W_1$ and $W_2$, $TW_1$ is uniformly smooth along $W_2$. The proof is similar to that of Proposition \ref{prop: E12 splt}. By Lemma \ref{lemma:sub exp grwth on plane} and the proof of Proposition \ref{prop: E12 splt}, we can find an Anosov element of $\al$ which contracts $W_1$ than it does $W_2$. Therefore $W_1$ is uniformly smooth within $W_1\oplus W_2$ (the integrable foliation of $W_1$ and $W_2$), which implies $TW_1$ is uniformly smooth along $W_2$.
\end{proof}
Now we come back to the discussion of $f$. The following lemma is a corollary of smoothness of coarse Lyapunov distributions and topological joint-integrability, 
\begin{lemma}\label{lemma: prod Eisu}Any direct sum of coarse Lyapunov distributions are integrable.
\end{lemma}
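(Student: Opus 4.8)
The plan is to verify the Frobenius involutivity condition, reducing the general statement to the pairwise case already established. Recall from Corollary \ref{coro: smooth Wi} that under the hypotheses of Theorem \ref{main toral case} every coarse Lyapunov distribution is uniformly $C^\infty$; since a regular $a\in\ZZ^k$ gives $TM=E^s_a\oplus E^u_a$ with each summand a direct sum of coarse Lyapunov distributions, we obtain a $C^\infty$ splitting $TM=\bigoplus_\chi E^\chi$, with $C^\infty$ projections $\pi_\chi$. Fix a set $S$ of coarse Lyapunov functionals and put $E_S:=\bigoplus_{\chi\in S}E^\chi$, a $C^\infty$ distribution on $M$. By the Frobenius theorem $E_S$ is tangent to a $C^\infty$ foliation as soon as it is involutive, so it suffices to prove involutivity.

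First I would record the two inputs about pairs. For a single $\chi$, $E^\chi$ is tangent to the coarse Lyapunov foliation, a $C^\infty$ distribution tangent to a foliation with $C^\infty$ leaves, hence involutive: $[E^\chi,E^\chi]\subseteq E^\chi$. For $\chi\neq\chi'$, the joint integrability hypothesis of Theorem \ref{main toral case} together with Lemma \ref{lemma: prod stru on wij} produces a foliation $W$ whose leaves are uniformly $C^\infty$ and tangent to $E^\chi\oplus E^{\chi'}$; since $E^\chi\oplus E^{\chi'}$ is again a $C^\infty$ distribution everywhere tangent to a $C^\infty$ (immersed) leaf, it is involutive, i.e. $[E^\chi,E^{\chi'}]\subseteq E^\chi\oplus E^{\chi'}\subseteq E_S$ whenever $\chi,\chi'\in S$.

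Then I would assemble these locally, since involutivity is a local condition. Take $C^\infty$ local sections $X,Y$ of $E_S$ and decompose $X=\sum_{\chi\in S}X^\chi$, $Y=\sum_{\chi\in S}Y^\chi$ with $X^\chi=\pi_\chi X$, $Y^\chi=\pi_\chi Y$ smooth local sections of $E^\chi$ (the components outside $S$ vanish because $X,Y$ are valued in $E_S$). By $\RR$-bilinearity of the bracket,
\[
[X,Y]=\sum_{\chi,\chi'\in S}[X^\chi,Y^{\chi'}],
\]
and by the previous paragraph every summand is a section of $E_S$; hence $[X,Y]$ is a section of $E_S$. Thus $E_S$ is involutive, and Frobenius gives the desired integrating foliation.

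The step I expect to require the most care is the passage from ``$E^\chi\oplus E^{\chi'}$ is topologically jointly integrable with smooth leaves'' to ``$E^\chi\oplus E^{\chi'}$ is an involutive $C^\infty$ distribution'': one must invoke both that the distribution itself is $C^\infty$ (Corollary \ref{coro: smooth Wi}) and the elementary fact that a $C^\infty$ distribution which at every point coincides with the tangent space of a $C^\infty$ immersed leaf through that point is involutive (restrict the two sections to a leaf and use that the Lie bracket is intrinsic to immersed submanifolds). Everything else is bookkeeping with the smooth splitting, and no estimates are needed.
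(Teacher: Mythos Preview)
Your proposal is correct and follows essentially the same approach as the paper: use Corollary~\ref{coro: smooth Wi} for smoothness of the coarse Lyapunov distributions, obtain pairwise involutivity $[E^\chi,E^{\chi'}]\subset E^\chi\oplus E^{\chi'}$ from the joint integrability hypothesis (via Lemma~\ref{lemma: prod stru on wij}), decompose arbitrary sections through the smooth projections, and conclude by bilinearity of the bracket and Frobenius. Your write-up is in fact more careful than the paper's, particularly in isolating the step from ``topologically jointly integrable with smooth leaves'' to ``involutive $C^\infty$ distribution''.
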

\begin{proof}By smoothness of coarse Lyapunov distributions and joint-integrability we know for any smooth vector fields $X_1, X_2$ tangent to some coarse Lyapunov distributions $E_1, E_2$ respectively, $[X_1,X_2]\subset E_1\oplus E_2$. Therefore for any direct sum of coarse Lyapunov distributions $E_1\oplus \cdot \oplus E_k$, we pick arbitrary pair of smooth vector fields $Y_1, Y_2$ which contained in $\oplus_{i=1}^k E_i$, we get $$[Y_1, Y_2]=\sum_{i,j}[Y_{1i}, Y_{2j}]\subset E_1\oplus \cdots \oplus E_k $$here $Y_{1i}$ ($Y_{2j}$) is the linear projection of $Y_1$ to $E_i$ ($E_j$ resp.). Therefore by Frobenius theorem we get the proof. 
\end{proof}

Suppose that $$E^s(f)=\oplus E^s_i, E^u(f)=\oplus E^u_i$$
where all of $E^s_i, E^u_i$ here are coarse Lyapunov distributions, and the associated Lyapunov foliations are denoted by $W^s_i, W^u_i$ respectively. By Lemma \ref{lemma: prod Eisu} we know locally for any $x,y$ on $M$ we could connect them by ``accessible" path, i.e. a  piecewise $C^1$ path $\gamma$ from $x$ to $y$ such that each $C^1-$part of $\gamma$ is contained in some coarse Lyapunov foliation. But since $f$ uniformly contracts $W^s(f)$, therefore the local structure of foliations on $W^s(f)$ is the same as global structure. As a corollary we have for any $x,y$ in the same $W^s(f)-$leaf there is an accessible path from $x$ to $y$ which is contained in $W^s(f,x)$.




For any coarse Lyapunov foliation $W$ in $W^u(f)$, we consider the foliations $W, W\oplus W^s(f)$. Then we have
\begin{lemma}\label{lemma: toral case induction} For every three points $x\in M, y\in W^s(x), z\in W$ and continuous paths $$\gamma_1: I\to W^s(x), \gamma_1(0)=x, \gamma_1(1)=y; \gamma_2:I\to W(x), \gamma_2(0)=x, \gamma_2(1)=z $$ such that $\gamma_1$ is accessible, there exists a continuous mapping $g$ from unit square $I^2$ into $M$ such that \begin{enumerate}
\item $g(0,0)=x, g(0,1)=y, g(1,0)=z$.
\item $g(t,\cdot)$ is an accessible path on a $W^s(f)$ leaf for every fixed $t\in I$ and $g(0,\cdot)=\gamma_1$
\item $g(\cdot,t)$ is a continuous curve on a $W$ leaf for every fixed $t\in I$ and $g(\cdot, 0)=\gamma_2$.
\end{enumerate}
\end{lemma}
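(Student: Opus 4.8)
The plan is to build $g$ by sliding the accessible path $\gamma_1$ across the $W$-leaf through $x$, using the holonomies of the coarse Lyapunov foliation $W$, and to reduce the whole construction to the case where $\gamma_1$ is a single $C^1$ arc lying in one coarse Lyapunov leaf. By definition of an accessible path we may write $\gamma_1=\gamma_1^{(1)}\ast\cdots\ast\gamma_1^{(N)}$, where $\gamma_1^{(j)}$ is a $C^1$ arc inside a leaf of one coarse Lyapunov foliation $W^s_{i_j}\subset W^s(f)$, running from $x_{j-1}$ to $x_j$ with $x_0=x$ and $x_N=y$. First I would establish the one-arc case and then induct on $N$.

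For the base case, assume $\gamma_1\subset W^s_i(x)$ for a single coarse Lyapunov foliation $W^s_i$. The hypothesis of Theorem \ref{main toral case} gives topological joint integrability of $W$ and $W^s_i$, so by Lemma \ref{lemma: prod stru on wij} the leaves of $W$ and $W^s_i$ have a global product structure on the leaf $L$ of the integrated foliation $W\oplus W^s_i$ through $x$, and $L$ has uniformly $C^\infty$ structure; moreover both $\gamma_1\subset W^s_i(x)\subset L$ and $\gamma_2\subset W(x)\subset L$. I would then set
\[
g(s,t):=W\big(\gamma_1(t)\big)\cap W^s_i\big(\gamma_2(s)\big)\in L ,
\]
the unique such point. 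The boundary conditions are immediate: at $t=0$, $W^s_i(\gamma_2(0))=W^s_i(x)=W^s_i(\gamma_1(t))$ forces $g(0,t)=\gamma_1(t)$; at $s=0$, $W(\gamma_1(0))=W(x)=W(\gamma_2(s))$ forces $g(s,0)=\gamma_2(s)$; hence $g(0,0)=x$, $g(0,1)=y$, $g(1,0)=z$. For fixed $t$ the curve $s\mapsto g(\cdot,t)$ lies in the single leaf $W(\gamma_1(t))$, with $g(\cdot,0)=\gamma_2$. For fixed $s$ the curve $t\mapsto g(s,t)$ lies in the single leaf $W^s_i(\gamma_2(s))\subset W^s(f,\gamma_2(s))$; by Corollary \ref{coro: smooth Wi} the distribution $TW$ restricts to a smooth integrable distribution on the smooth leaf $L$, so $W|_L$ is a smooth foliation and the $W$-holonomy between $W^s_i$-leaves inside $L$ is smooth, whence $t\mapsto g(s,t)$ is a $C^1$ arc, i.e.\ an accessible path on a $W^s(f)$-leaf, and $g(0,\cdot)=\gamma_1$. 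Joint continuity of $g$ follows from continuity of the product-structure bracket on $L\times L$.

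For the inductive step I would split $I$ into consecutive closed intervals $I_1,\dots,I_N$ meeting at $0=s_0<s_1<\cdots<s_N=1$, reparametrise each $\gamma_1^{(j)}$ over $I_j$, and define $g$ on $I\times I_j$ one block at a time: on $I\times I_1$ by the base case with $\gamma_1^{(1)}$ and $\gamma_2$; and, given $g$ on $I\times I_{j-1}$, set $\gamma_2^{(j-1)}(s):=g(s,s_{j-1})$, which by property $(3)$ of block $j-1$ is a continuous curve in $W(x_{j-1})$ through $x_{j-1}$, and apply the base case to $\gamma_1^{(j)}\subset W^s_{i_j}(x_{j-1})$ and $\gamma_2^{(j-1)}$. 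Consecutive blocks agree along $t=s_{j-1}$, since there the block-$j$ formula gives $W(x_{j-1})\cap W^s_{i_j}(\gamma_2^{(j-1)}(s))=\gamma_2^{(j-1)}(s)$ because $\gamma_2^{(j-1)}(s)\in W(x_{j-1})$; so $g$ is globally continuous. For every $t$, $g(\cdot,t)$ lies in a $W$-leaf. For every $s$, the slice $g(s,\cdot)$ is the concatenation of the $N$ $C^1$ arcs produced by the blocks, each contained in a coarse Lyapunov sub-foliation of $W^s(f)$; tracing the base points one checks inductively that $\gamma_2^{(j)}(s)\in W^s(f,\gamma_2(s))$, so every arc lies in the single leaf $W^s(f,\gamma_2(s))$ and the slice is an accessible path with $g(0,\cdot)=\gamma_1$; also $g(\cdot,0)=\gamma_2$ and $g(0,0)=x$, $g(0,1)=y$, $g(1,0)=z$.

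The part I expect to require the most care is the regularity and global consistency of $g$: one must know that the global product structure on the leaves of $W\oplus W^s_i$ is genuinely global (Lemma \ref{lemma: prod stru on wij}) so that the bracket is defined along the whole of the possibly long paths $\gamma_1,\gamma_2$, and that the $W$-holonomy between $W^s_i$-leaves sends $C^1$ arcs to $C^1$ arcs, so that the transverse slices $g(s,\cdot)$ are honest accessible paths — this is exactly where the uniformly $C^\infty$ leaves from Lemma \ref{lemma: prod stru on wij} and the smoothness of the coarse Lyapunov distributions from Corollary \ref{coro: smooth Wi} enter. A secondary, purely bookkeeping, point is to keep track that the concatenated slice $g(s,\cdot)$ stays inside one $W^s(f)$-leaf as the blocks are glued; this holds because each block contributes an arc in a coarse Lyapunov sub-foliation of $W^s(f)$ and consecutive blocks share an endpoint.
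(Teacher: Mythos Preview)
Your proof is correct and follows essentially the same strategy as the paper's: decompose the accessible path $\gamma_1$ into single coarse-Lyapunov arcs, use the (global) product structure of $W\oplus W^s_i$ from Lemma~\ref{lemma: prod stru on wij} to fill in a rectangle for each arc, and then glue the rectangles along their common edges by induction. Your version is in fact slightly more detailed than the paper's --- you give the explicit bracket formula $g(s,t)=W(\gamma_1(t))\cap W^s_i(\gamma_2(s))$ and you invoke Corollary~\ref{coro: smooth Wi} to secure the $C^1$ regularity of the transverse slices so that $g(s,\cdot)$ is a genuine accessible path --- but the underlying argument is the same. (One cosmetic slip: in your boundary-condition check the labels ``at $t=0$'' and ``at $s=0$'' are interchanged, though the conclusions you draw are the correct ones.)
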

\begin{proof}

Without loss of generality we could assume 
$\gamma_1: I\to W^s(f,x)$ has the form 
$$\gamma_1(0)=x, \gamma_1(1)=y, \gamma_1([\frac{i}{n},\frac{i+1}{n}])\subset W^s_{k(i)}(\gamma_1(\frac{i}{n}))$$
for some $k(i)$. 

Consider the paths $\gamma_1|_{[0,\frac{1}{n}]}$ and $\gamma_2$, since $W\oplus W_{k(1)}^s$ is topological a product of $W$ and $W^s_{k(1)}$, we could easily find a continuous map $g_1:I^2\to W\oplus W^s_{k(1)}(x)$ such that 
\begin{enumerate}
\item $g_1(0,\cdot)=\gamma_1'|_{[0,\frac{1}{n}]}, g_1(\cdot,0)=\gamma_2$
\item $g_1(t,\cdot)$ (resp. $g_1(t,\cdot)$) is a continuous curve on a $W^s_{k(1)}$ (resp. $W$) leaf for every $t\in I$.
\end{enumerate}

Similarly, consider the paths $\gamma_1|_{\frac{1}{n}, \frac{2}{n}}$ and $g_1(\cdot,1)$, since $W\oplus W^s_{k(2)}$ is topological a product of $W$ and $W^s_{k(2)}$, we could find a continuous map $g_2:I^2\to W\oplus W^s_{k(2)}(\gamma_1(\frac{1}{n}))$ such that 
\begin{enumerate}
\item $g_2(0,\cdot)=\gamma_1|_{[\frac{1}{n}, \frac{2}{n}]}, g_2(\cdot,0)=g_1(\cdot,1)$
\item $g_2(t,\cdot)$ (resp. $g_2(t,\cdot)$) is a continuous curve on a $W^s_{k(2)}$ (resp. $W$) leaf for every $t\in I$.
\end{enumerate}

Repeat the constructions above, by induction we could construct a sequence of map $g_i:I^2\to M$. Moreover we could easily glue them along the vertical side and get a continuous map $g:I^2\to M$ satisfies the conditions of Lemma \ref{lemma: toral case induction}.
\end{proof}
 Suppose that Lemma \ref{lemma: toral case induction} holds, then consider any three points $x\in M, y\in W^s(x), z\in W^u(x)$. We can pick an accessible path $\gamma: I\to W^u(x)$ such that $$\gamma(0)=x, \gamma(1)=z, \gamma([\frac{i}{n}, \frac{i+1}{n}])\subset W^u_{k(i)}(\gamma(\frac{i}{n}))$$
for some $k(i)$. Choose an arbitrary accessible path $\gamma'$ from $x$ to $y$, then apply Lemma \ref{lemma: toral case induction} with $\gamma_1=\gamma', \gamma_2=\gamma|_{[0,\frac{1}{n}]}, W=W^u_1$ we get a continous map $g_0:I^2\to M $ satisfying all the conditions in Lemma \ref{lemma: toral case induction}. Now we apply Lemma \ref{lemma: toral case induction} again with $\gamma_1=g_0(1,\cdot), \gamma_2=\gamma|_{[\frac{1}{n},\frac{2}{n}]}, W=W^u_2$ we get a continuous map $g_1:I^2\to M$. Repeat the arguments above, by induction we could construct a sequence of map $g_i$, we can easily glue them along the vertical side and get a continuous map $g:I^2\to M$ such that $g(0,0)=x, g(0,1)=y, g(1,0)=z$ and $g(t,\cdot)$ (resp. $g(\cdot, t)$) is a continuous curve on a (un)stable leaf for each $t$. By Brin's result \cite{B77}, $f$ has a global product structure. 


Now we prove $f$ has a \emph{polynomial} global product structure. The first step is the following estimate. 
\begin{lemma}\label{lemma: tor cas Hol est}For any $C>0, n>0$ large enough, we have for any $x\in M$, any two coarse Lyapunov foliations $W_1, W_2$, $y\in W_2(x)$, $d_{W_2}(x,y)=R>0$, if we denote the holonomy along $W_2$ between $W_1$ leaves by $h$, then $$\|dh_{xy}\|\leq C(\max(\ln R, 0)^n+1)$$
\end{lemma}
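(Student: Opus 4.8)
\textbf{Proof proposal for Lemma \ref{lemma: tor cas Hol est}.}

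The plan is to reduce the estimate on the holonomy derivative along $W_2$ between leaves of $W_1$ to the subexponential growth estimate of Lemma \ref{lemma:sub exp grwth on plane}, via the dynamical characterization of holonomy derivatives established in section \ref{chap: spltt}. First I would choose, as in the proof of Proposition \ref{prop: E12 splt}, an Anosov element $c \in \ZZ^k$ (after passing to the suspension if convenient) that lies in a Weyl chamber adjacent to the Lyapunov hyperplane $L_2$ corresponding to $W_2$, chosen so that $W_1$ is a \emph{fast} stable subfoliation inside $W_1\oplus W_2 \subset W^s(c)$: precisely, $c$ contracts $W_1$ uniformly exponentially (say $\|D\al(c)|_{E_1}\| < C_0 e^{-\theta}$ for every iterate, with $\theta>0$) while by Lemma \ref{lemma:sub exp grwth on plane} the growth of $D\al|_{E_2}$ is subexponential along the direction of $L_2$, hence — by the argument used for \eqref{eqn: contract faster} and \eqref{eqn: est -b1 E} — we may arrange that $c$ contracts $W_1$ strictly faster than it contracts $W_2$, and with a definite exponential gap. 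Under this arrangement $D\al(c)|_{E_1\oplus E_2}$ is fiber-bunched over $W_2$, so by Proposition \ref{prop: properties holonomies}(b) and Lemma \ref{lemma: Hlmy 2nd def} the holonomy derivative $dh_{xy}$ is exactly the stable Holonomy $H^{W_1}_{c,x,y}$, given by the convergent limit $dh_{xy} = \lim_{m\to\infty} (D\al(mc)_y|_{E_1})^{-1}\circ I_{\al(mc)x,\al(mc)y}\circ D\al(mc)_x|_{E_1}$.

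Next I would run the telescoping estimate for this limit, as in the proof of Proposition 4.2 of \cite{KS13}. Iterating $c$ a number $m$ of times, the distance $d_{W_2}(\al(mc)x,\al(mc)y)$ decays like $\|D\al(mc)|_{E_2}\|\cdot R$, which by Lemma \ref{lemma:sub exp grwth on plane} is at most $C' e^{\eps m} R$ for any prescribed small $\eps>0$; meanwhile $\|D\al(mc)|_{E_1}\|\cdot\|(D\al(mc)|_{E_1})^{-1}\|$ stays bounded by $C_0^2$ by the (uniform) conformal-type control coming from the standard form of Theorem \ref{thm: osl spltt cont} applied to $E_1$ — or, if one prefers to avoid invoking the full standard form here, by the fiber-bunching inequality \eqref{eqn:FiberBu} which gives $\|D\al(mc)_y|_{E_1}^{-1}\|\cdot\|D\al(mc)_x|_{E_1}\| \le C_1 e^{\delta m}$ with $\delta$ as small as we like once $c$ is close enough to $L_2$. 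Summing the Hölder increments $C\, d_{W_2}(\al(mc)x,\al(mc)y)^\beta \le C'' (e^{(\eps\beta) m} R^\beta)$ against the exponential contraction factor from $E_1$, the series converges provided the contraction rate $\theta$ beats $\eps\beta+\delta$, which we may ensure by the choice of $c$; and the sum is bounded by a constant times $R^\beta$ for $R \le 1$ and, for $R$ large, by a constant times the number $m_0 \approx \frac{1}{\theta}\ln R$ of initial steps needed before $d_{W_2}(\al(m_0 c)x,\al(m_0 c)y)$ drops below the uniform size on which the Hölder/bunching estimates are valid, each of those initial steps contributing a uniformly bounded amount to $\|dh\|$. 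Raising this to the appropriate power and absorbing $\beta$-exponents into the constant $C$ and the polynomial degree $n$ gives $\|dh_{xy}\| \le C(\max(\ln R,0)^n + 1)$.

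The main obstacle I anticipate is precisely the bookkeeping in the large-$R$ regime: when $d_{W_2}(x,y)=R$ is not small, the formula $dh_{xy} = \lim (D\al(mc)_y)^{-1} I\, D\al(mc)_x$ cannot be estimated by the single-step Hölder bound until the images $\al(mc)x,\al(mc)y$ have been pulled to within a uniform distance, and tracking how many iterates that takes — together with the fact that along the way the points travel through leaves of $W_2$ whose geometry is only Hölder — is where the polynomial factor $(\ln R)^n$ is generated and where the constants must be shown uniform in $x$. This is handled by first decomposing the $W_2$-path from $x$ to $y$ into uniformly short sub-paths (using that $f=\al(a)$ for some Anosov $a$ gives uniform local product structure, cf. Lemma \ref{lemma: prod stru on wij}), estimating the holonomy over each short sub-path by the convergent-series bound above, and then composing: $n$ short holonomies compose to one holonomy whose norm is at most the product of the individual norms, and the number of sub-pieces grows at most polynomially (in fact linearly, then refined) in $\ln R$ by the uniform exponential contraction of $W_1$ under $c$. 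The bound \eqref{eqn: contract faster}–\eqref{eqn:FiberBu} from Proposition \ref{prop: E12 splt}, Lemma \ref{lemma:sub exp grwth on plane}, and the Hölder control of Proposition \ref{prop: properties holonomies} are exactly the ingredients that make each piece of this argument uniform.
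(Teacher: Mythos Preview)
Your approach has a genuine gap in the choice of the dynamical element. You pick $c$ in a Weyl chamber adjacent to the hyperplane $L_2$ corresponding to $W_2$, hoping that being close to $L_2$ will make the cocycle $D\al(c)|_{E_1}$ nearly isometric. But this is backwards: proximity to $L_2$ controls the cocycle on $E_2$, not on $E_1$. For $c$ near $L_2$ (hence away from $L_1$) the Lyapunov functional satisfies $\chi_1(c)\neq 0$, so if the coarse Lyapunov bundle $E_1$ splits as $\bigoplus_j E_{m_j\chi_1}$ with more than one $m_j$, then $\|D\al(mc)|_{E_1}\|\cdot\|(D\al(mc)|_{E_1})^{-1}\|$ grows like $e^{(m_{\max}-m_{\min})|\chi_1(c)|\,m}$, which is genuinely exponential in $m$. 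Neither the standard form of Theorem~\ref{thm: osl spltt cont} (which gives polynomial deviation only \emph{within} each Oseledec space $E_{m_j\chi_1}$, not across the whole coarse bundle $E_1$) nor the bunching inequality~\eqref{eqn:FiberBu} (which concerns $D\al|_{E}$ for the coarse Lyapunov $E$ attached to the hyperplane one is approaching) yields your claimed bound $\|(D\al(mc)|_{E_1})^{-1}\|\cdot\|D\al(mc)|_{E_1}\|\le C_1 e^{\delta m}$ with $\delta$ arbitrarily small. Since $c$ contracts $W_2$ only at a weak rate $\eta\to 0$ as $c\to L_2$, the number of iterates needed to bring $x,y$ close is $m_0\sim\eta^{-1}\ln R$, and the resulting bound on $\|dh_{xy}\|$ is exponential in $m_0$, hence a power of $R$ rather than a polynomial in $\ln R$. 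Your path-decomposition fallback in the last paragraph does not help either: a $W_2$-path of length $R$ breaks into $\sim R$ uniformly short pieces, not $\sim\ln R$, and composing that many bounded holonomies gives an exponential bound.

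The paper's argument makes the opposite choice: take $b$ \emph{in} the hyperplane $L_1$ corresponding to $W_1$ (generic there, so that $b$ uniformly exponentially contracts $W_2$). Then every Lyapunov exponent of $D\al$ on $E_1=\bigoplus_j E_{m_j\chi_1}$ vanishes at $b$, so the polynomial-deviation part of Theorem~\ref{thm: osl spltt cont} gives $\|D\tal(\pm Tb)|_{E_1}\|\le C'(T+1)^{n'}$ for all $T$. One iterates $T\sim\ln R$ times so that $d_{W_2}(\tal(Tb)x,\tal(Tb)y)<\epsilon$, uses the equivariance
\[
dh_{xy}=D\tal(-Tb)|_{E_1}\circ dh_{\tal(Tb)x,\,\tal(Tb)y}\circ D\tal(Tb)|_{E_1},
\]
and bounds the middle factor by a uniform constant using the \emph{smoothness} of the coarse Lyapunov foliations established in Corollary~\ref{coro: smooth Wi}, rather than any H\"older telescoping sum. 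The polylogarithmic bound $\|dh_{xy}\|\le 2C'^2(T+1)^{2n'}\le C(\max(\ln R,0)^n+1)$ then falls out immediately. The idea you are missing is that the needed polynomial control on $E_1$ comes from the \emph{vanishing} of $\chi_1$ at the chosen element, which forces that element into $L_1$, not near $L_2$.
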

\begin{proof}Without loss of generality we could work on $\tM$. By smoothness of coarse Lyapunov foliations, there exists $\epsilon>0$ such that for any $x\in M$, $y\in W_2(x)$, $d_{W_2}(x,y)\leq \epsilon$, we have $\|dh_{xy}\|\leq 2$.

Now we pick a norm one element $b\in \RR^k$ in the hyperplane corresponding to $W_1$ and uniformly contracts $W_2$. We denote by $T\in \ZZ^+$ the minimal non negative integer number such that $d_{W_2}(\tal(Tb)\cdot x, \tal(Tb)\cdot y)<\epsilon$, then 
\begin{equation}\label{eqn: est time}
T< \begin{cases}\frac{\ln R-\ln \epsilon}{\ln \lambda}+C_1 ~~\text{if $R>\epsilon$}\\
1~~ \text{if $R\leq \epsilon$}\end{cases}
\end{equation}
where $\lambda>1$ corresponds to the contracting rate of $b$ on $W_2$ and $C_1$ is a uniformly bounded error term coming from the rate of different metrics.

Notice that $$dh_{xy}=D\tal(-Tb)|_{E_1(\tal(Tb)\cdot y)}\circ dh_{\tal(Tb)\cdot x, \tal(Tb)\cdot y}\circ D\tal(Tb)|_{E_1(x)}.$$ By Theorem \ref{thm: osl spltt cont} we know $$\|D\tal(Tb)|_{E_1(x)}\|, \|D\tal(-Tb)|_{E_1(\tal(Tb)\cdot y)}\|\leq {C'}(T+1)^{n'}$$ for some $C', n'>0$
Therefore 
\begin{eqnarray*}
\|dh_{xy}\|&\leq& 2C'^2(T+1)^{2n'}\\
&\leq & C(\max(\ln R, 0)^n+1) ~~~\text{for some $C,n$ large enough (by \eqref{eqn: est time})} 
\end{eqnarray*}
\end{proof}
The following is a polynomial estimate within $W^s$, similar estimate also holds for $W^u$.
\begin{lemma}\label{lemma: poly est within Ws}For any $C>0, l>0$ large enough we have that, for  any $x\in M, y\in W^s(x)$ with $d_{W^s}(x,y)=R$, there exists a piecewise $C^1-$path $\gamma:I\to W^s(x)$ such that 
\begin{equation}\label{eqn: path cond}\gamma(0)=x, \gamma(1)=y, \gamma([\frac{i}{n}, \frac{i+1}{n}])\subset W^s_i
\end{equation} and each segment $\gamma([\frac{i}{n}, \frac{i+1}{n}])$ has length less than $C(R^l+1)$.
\end{lemma}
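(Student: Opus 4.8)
The plan is to reduce to a fixed small scale by a high forward iterate of the Anosov element $f=\al(a)$, and then to pull the resulting path back, controlling segment lengths by the uniform expansion of $Df^{-T}$ on $E^s(f)$. First I would record a small-scale version of the statement. Write $E^s(f)=E^s_1\oplus\cdots\oplus E^s_n$ as a sum of coarse Lyapunov distributions, with foliations $W^s_1,\dots,W^s_n$; by Corollary~\ref{coro: smooth Wi} these are uniformly $C^\infty$, and by Lemma~\ref{lemma: prod Eisu} every partial sum is integrable. Restricting to a single leaf of $W^s(f)$ and using compactness of $M$, the $n$ smooth foliations then admit a local product structure of uniformly bounded distortion at some uniform scale $\epsilon>0$: whenever $p,q$ lie on the same $W^s(f)$-leaf with $d_{W^s}(p,q)\le\epsilon$, there is an accessible path $p=q_0\to q_1\to\cdots\to q_n=q$ with $q_i$ lying in the $W^s_i$-leaf of $q_{i-1}$ and each segment of length at most $C_0\,d_{W^s}(p,q)$.

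Now, given $x$ and $y\in W^s(f,x)$ with $d_{W^s}(x,y)=R$: the case $R\le\epsilon$ is immediate from the previous paragraph (all segment lengths are then $\le C_0\epsilon$, absorbed in the ``$+1$''). For $R>\epsilon$, since $f$ is a smooth Anosov diffeomorphism it uniformly contracts $E^s(f)$, so there are $\lambda\in(0,1)$, $\Lambda>1$, $C_1>0$ with $\|Df^{T}|_{E^s(f)}\|\le C_1\lambda^{T}$ and $\|Df^{-T}|_{E^s(f)}\|\le C_1\Lambda^{T}$ for all $T\ge0$ (the polynomial deviation in Theorem~\ref{thm: osl spltt cont}(3) would sharpen the exponents but is not needed here). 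Choose $T=T(R)\in\ZZ^+$ minimal with $C_1\lambda^{T}R\le\epsilon$; then $d_{W^s}(f^{T}x,f^{T}y)\le\epsilon$ and $T\le\frac{\log R}{\log(1/\lambda)}+C_2$. Applying the small-scale fact to the pair $f^{T}x,f^{T}y$ yields an accessible path $f^{T}x=p_0\to\cdots\to p_n=f^{T}y$ with all segments of length $\le C_0\epsilon$; put $\gamma:=f^{-T}$ of this path. Since $f$ preserves each $W^s_i$ and carries accessible paths to accessible paths, $\gamma$ is a piecewise $C^1$ path from $x$ to $y$ satisfying \eqref{eqn: path cond} after an affine reparametrization of $I$, and integrating the bound on $\|Df^{-T}|_{E^s(f)}\|$ along its $i$-th segment gives length at most $C_1\Lambda^{T}C_0\epsilon\le C(R^{\,\log\Lambda/\log(1/\lambda)}+1)\le C(R^{l}+1)$ once $l>\log\Lambda/\log(1/\lambda)$. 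The corresponding estimate inside $W^u(f)$ follows by running the same argument for $f^{-1}$.

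The main obstacle, and the only place where the hypotheses of Theorem~\ref{main toral case} really enter, is the uniform small-scale product structure of the $n$ coarse Lyapunov subfoliations of $W^s(f)$ with bounded distortion: this needs smoothness of the coarse Lyapunov distributions (Corollary~\ref{coro: smooth Wi}, itself resting on topological joint integrability together with Proposition~\ref{prop: E12 splt} and Lemma~\ref{lemma:sub exp grwth on plane}) and integrability of all partial sums (Lemma~\ref{lemma: prod Eisu}); once these are available, everything else is a routine application of the uniform hyperbolicity of a single Anosov element. An alternative to the $f^{T}$ reduction would be to estimate the coordinates of $y$ relative to $x$ directly, one coarse Lyapunov factor at a time, via the holonomy bound of Lemma~\ref{lemma: tor cas Hol est}, but the reduction to small scale keeps the bookkeeping minimal.
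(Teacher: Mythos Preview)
Your proposal is correct and follows essentially the same approach as the paper's own proof: establish a uniform small-scale accessible-path decomposition inside $W^s(f)$ from smoothness and integrability of the coarse Lyapunov subfoliations, iterate forward by $f^T$ until $d_{W^s}(f^Tx,f^Ty)\le\epsilon$, apply the small-scale fact, and pull back by $f^{-T}$ bounding segment lengths by $\|Df^{-1}|_{E^s}\|^T$ to get the polynomial estimate with exponent $l>\log\Lambda/\log(1/\lambda)$. The paper's argument is identical in structure and in the key estimates; your write-up is in fact slightly more explicit about the constants.
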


\begin{proof}By smoothness of foliations and Lemma \ref{lemma: prod Eisu} we know there exist $\epsilon>0$ such that for any geodesic $\gamma_0$ in $W^s$(with respect to the metric $d_{W^s}$) with length less than $\epsilon$, there exists $C_1>0$ such that we could find a continuous path $\gamma_1$ with $\gamma_1(0)=\gamma_0(0), \gamma_1(1)=\gamma_0(1)$ and satisfies the same condition in \eqref{eqn: path cond}, moreover each segment $\gamma_0([\frac{i}{n}, \frac{i+1}{n}])$ has length smaller than $C_1\cdot \text{length}(\gamma_0)$.

Without loss of generality we could assume $R$ in the statement of Lemma \ref{lemma: poly est within Ws} larger than $\max(\epsilon,1)$ (otherwise we can pick $\gamma$ and $C$ large such that $\text{length}(\gamma([\frac{i}{n}, \frac{i+1}{n}]))<\frac{C}{n}$) and we denote by $T$ the smallest integer number such that $d_{W^s}(f^n(x), f^n(y))<\epsilon$. Then as in the proof of Lemma \ref{lemma: tor cas Hol est}, we have $T<\frac{\ln R-\ln \epsilon}{\ln \lambda}+C_2$ for some $C_2$ large enough and $\lambda>1$ corresponds to the slowest contracting rate of $df|_{W^s}$.

Now by previous discussion we know there exists a piecewise $C^1$ path $\gamma_1$ such that $\gamma_1(0)=f^T(x), \gamma_1(1)=f^T(y)$, $\gamma_1([\frac{i}{n}, \frac{i+1}{n}])\subset W^s_i$ and $\text{length}(\gamma_1([\frac{i}{n}, \frac{i+1}{n}]))\leq C_1\epsilon$, then $f^{-T}(\gamma_1)$ satisfies all conditions in \eqref{eqn: path cond}, and \begin{eqnarray*}
\text{length}(f^{-T}(\gamma_1))&\leq& n\cdot C_1\epsilon\cdot \|df^{-1}\|^T\\
&\leq &O(e^{(\frac{\ln R-\ln \epsilon}{\ln \lambda}+C_2)\cdot \ln \|df^{-1}\|})\\
&\leq &O(R^{\frac{\ln \|df^{-1}\|}{\ln \lambda}})\\
&\leq & C\cdot R^l ~~\text{if $C, l$ large enough.}
\end{eqnarray*} \end{proof}

Now we come back to the proof that $f$ has polynomial global product structure.

\begin{lemma}\label{lemma: pgps for f}There exists $C>0$ large enough such that for any $x, y\in \hat{M}$ (the universal cover of $M$), if we denote $d_{\hat{M}}(x,y)$ by $R$, we have 
\begin{equation}\label{eqn: poly est on univ cover}
d_{W^s}(x,[x,y]), d_{W^u}([x,y],y)<C(R^2+1)
\end{equation}
\end{lemma}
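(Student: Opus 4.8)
The plan is to prove Lemma~\ref{lemma: pgps for f} by a \emph{continuous sweeping} argument along a geodesic, together with an induction on the scale $R$. Recall that we have already established that $f$ has a global product structure, so for all $x,y\in\hat M$ the point $[x,y]=W^s(x)\cap W^u(y)$ is well defined and depends continuously on $(x,y)$; since the deck group acts cocompactly and isometrically, the quantity $\Phi(R):=\sup\{d_{W^s}(x,[x,y])+d_{W^u}([x,y],y):d_{\hat M}(x,y)\le R\}$ is finite for every $R$, and it suffices to bound $\Phi(R)\le C(R^2+1)$. In fact the argument will give the sharper $\Phi(R)=O(R(\log R)^{N_0})$ for a fixed $N_0$.

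Fix $x,y$ with $d_{\hat M}(x,y)=R$ and let $\gamma:[0,R]\to\hat M$ be a unit-speed minimizing geodesic from $x$ to $y$. For $t\in[0,R]$ put $z_t:=[x,\gamma(t)]=W^s(x)\cap W^u(\gamma(t))$; then $z_0=x$, $z_R=[x,y]$, and the whole path $t\mapsto z_t$ lies on the single leaf $W^s(x)$, so $d_{W^s}(x,[x,y])\le\int_0^R\|\dot z_t\|\,dt$. The crux is the speed bound: moving $\gamma(t)$ changes $z_t$ only through the change of the leaf $W^u(\gamma(t))$, so $\|\dot z_t\|$ is controlled by the derivative of the holonomy of $W^u$ between the transversals $W^s(\gamma(t))$ and $W^s(x)$, taken along the $W^u$-arc from $\gamma(t)$ to $z_t$, whose length is $\beta(t):=d_{W^u}(\gamma(t),z_t)$. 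Decomposing $W^u=W^u(f)$ into its coarse Lyapunov subfoliations (smooth by Corollary~\ref{coro: smooth Wi}), using integrability of the partial sums (Lemma~\ref{lemma: prod Eisu}) and the local product structure inside $W^u$ (Lemma~\ref{lemma: prod stru on wij}) to write this $W^u$-arc as a concatenation of boundedly many coarse Lyapunov legs, each of length at most $C(\beta(t)^l+1)$ by the $W^u$-version of Lemma~\ref{lemma: poly est within Ws}, and applying the \emph{logarithmic} holonomy estimate of Lemma~\ref{lemma: tor cas Hol est} to each leg, one obtains $\|\dot z_t\|\le C\big(\log(\beta(t)+2)\big)^{N_0}$ with $N_0$ independent of $R$. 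For $t<R$ we have $d_{\hat M}(x,\gamma(t))\le t<R$, so the inductive hypothesis gives $\beta(t)\le\Phi(t)$, whence $d_{W^s}(x,[x,y])\le C\int_0^R\big(\log(\Phi(t)+2)\big)^{N_0}\,dt$.

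A symmetric sweep bounds the unstable distance: with $z_t':=[\gamma(t),y]=W^s(\gamma(t))\cap W^u(y)$ one has $z_0'=[x,y]$, $z_R'=y$, the path lies on $W^u(y)$, its speed is at most $C\big(\log(d_{W^s}(\gamma(t),[\gamma(t),y])+2)\big)^{N_0}\le C\big(\log(\Phi(R-t)+2)\big)^{N_0}$, and so $d_{W^u}([x,y],y)\le C\int_0^R\big(\log(\Phi(u)+2)\big)^{N_0}\,du$. Adding the two estimates and taking the supremum over pairs at distance $\le R$ produces the closed integral inequality $\Phi(R)\le A+B\int_0^R\big(\log(\Phi(t)+2)\big)^{N_0}\,dt$ with $A,B$ fixed, the constant $A$ absorbing the small-scale case, which is just the Lipschitz local product structure coming from uniform transversality of $E^s$ and $E^u$. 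Comparing $\Phi$ with the solution of $\phi'=B(\log(\phi+2))^{N_0}$, $\phi(0)=A$, and using $\int^{X}du/(\log u)^{N_0}\sim X/(\log X)^{N_0}$, yields $\Phi(R)=O(R(\log R)^{N_0})$, which is in particular $\le C(R^2+1)$; this completes both the proof of Lemma~\ref{lemma: pgps for f} and, via \cite{Ha}, the proof of Theorem~\ref{main toral case}.

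The main obstacle, and the reason the scheme succeeds, is precisely that the holonomy distortion in Lemma~\ref{lemma: tor cas Hol est} is only \emph{logarithmic} in the leafwise distance (a consequence of the polynomial deviation of $D\al$ in Theorem~\ref{thm: osl spltt cont}): a naive subdivision of $\gamma$ into order-$R$ fixed-size pieces followed by a piecewise rearrangement of stable and unstable legs would multiply lengths by a bounded factor at each of the $\sim R$ steps and thus only give an \emph{exponential} bound. Replacing the discrete rearrangement by the continuous sweep turns this into a \emph{single} integration of a logarithmic integrand, of cost $O(R\,\mathrm{polylog}\,R)$, which the induction then closes. The delicate technical points to verify carefully are (i) the speed bound $\|\dot z_t\|\le C(\log(\beta(t)+2))^{N_0}$, i.e. expressing the variation of the intersection point $W^s(x)\cap W^u(\gamma(t))$ as a controlled composition of coarse Lyapunov holonomies — here one genuinely uses the smoothness of the coarse Lyapunov foliations and of the holonomies between their leaves, together with the polynomial within-leaf connectivity of Lemma~\ref{lemma: poly est within Ws} — and (ii) the finiteness of $\Phi(R)$ and the legitimacy of the integral inequality, so that the Gronwall-type comparison applies.
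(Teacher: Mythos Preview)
Your argument is correct and rests on the same two ingredients the paper uses: the logarithmic bound on coarse Lyapunov holonomies (Lemma~\ref{lemma: tor cas Hol est}) together with the polynomial ``accessible rewriting'' of paths inside $W^u$ (Lemma~\ref{lemma: poly est within Ws}). The structural idea is also the same: sweep along a geodesic from $x$ to $y$, and at each step control the displacement of the intersection point $[x,\gamma(t)]$ on $W^s(x)$ by the derivative of the $W^u$-holonomy $h^u_{\gamma(t),[x,\gamma(t)]}$, which one bounds by decomposing the $W^u$-arc into coarse Lyapunov legs exactly as in the paper's estimate \eqref{eqn: est dhu z [x,z] on Ej}.

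Where you differ is in the packaging. The paper runs a contradiction: it posits a ``first'' pair $(x,y)$ with $d_{W^s}(x,[x,y])=C(R^2+1)$, subdivides the geodesic into short pieces $x_i$, bounds $\sum_i d_{W^s}([x,x_i],[x,x_{i+1}])$ by the holonomy estimate (using the assumed bound \eqref{eqn: poly est z on cover} along the geodesic to control the $W^u$-length entering the holonomy), and lands on $\le C(R^{3/2}+1)$, a contradiction. You instead work directly with $\Phi(R)=\sup_{d(x,y)\le R}\big(d_{W^s}(x,[x,y])+d_{W^u}([x,y],y)\big)$, do the sweep continuously (legitimate here because the coarse Lyapunov distributions, hence $E^s,E^u$, are smooth by Corollary~\ref{coro: smooth Wi}), and close via the self-referential integral inequality $\Phi(R)\le A+B\int_0^R(\log(\Phi(t)+2))^{N_0}\,dt$ and an ODE comparison. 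Your route avoids the somewhat delicate ``minimal counterexample'' step and yields the sharper $\Phi(R)=O(R(\log R)^{N_0})$; the paper's discrete version is slightly more hands-on but otherwise equivalent. One cosmetic remark: what you call an ``inductive hypothesis'' is really just the definition of $\Phi$ (since $d(x,\gamma(t))=t$ gives $\beta(t)\le\Phi(t)$ outright), so the argument is a straight Gr\"onwall closure rather than an induction.
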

\begin{proof}We will determine the largeness of $C$ later in the proof. Suppose \eqref{eqn: poly est on univ cover} is not true, then by compacticity of $M$ and continuity of foliations we could find $x,y\in \hat{M}$, such that $$d_{W^s}(x,[x,y]), d_{W^u}([x,y],y)\leq C(R^2+1)$$ and one of it is \textbf{equal to} $C(R^2+1)$, where $R:=d_{\hat{M}}(x,y) $. In addition we could assume that for any point $z$ on the geodesic $\gamma$ from $x$ to $y$, we have 
\begin{equation}\label{eqn: poly est z on cover}
d_{W^s}(z,[z,y]), d_{W^u}([z,y],y)\leq C(R^2+1)
\end{equation}

By continuity of $W^s$ and $W^u$, for $n$ large enough, there exists a constant $C_0$ does not depend on $\gamma$ and $n$ satisfies the following properties: we denote $\gamma(\frac{i}{n})$ by $x_i$, then for any $i$ we have 
\begin{equation}\label{eqn: geodesic devide}
\max(d_{W^s}(x_i, [x_i, x_{i+1}]),d_{W^u}([x_i,x_{i+1}], x_{i+1}))\leq C_0d_{\hat{M}}(x_i, x_{i+1})
\end{equation} 
Therefore 
\begin{equation}\label{eqn: sum of s path}
\sum_{i}d_{W^s}(x_i,[x_i,x_{i+1}])\leq C_0\cdot R
\end{equation}

The key to prove \eqref{eqn: poly est on univ cover} is to control the size of $d_{W^s}([x,x_i], [x,x_{i+1}])$. Denote the global holonomy along $W^u$ foliation between two $W^s$ leaves on $\hat{M}$ by $h^u$. Consider the geodesic $\gamma^s_i$ with respect to $d_{W^s}$ from $x_i$ to $x_{i+1}$, then 
\begin{equation}\label{eqn: size est xi,i+1}
d_{W^s}([x,x_i], [x,x_{i+1}])\leq \text{length}(h^u_{x_i,[x,x_i]}\cdot\gamma^s_i)\leq d_{W^s}(x_i, [x_i,x_{i+1}])\cdot \sup_{z\in \gamma^s_i} \|dh^u_{z, [x,z]}\|
\end{equation}

By \eqref{eqn: poly est z on cover} we know, without loss of generality, we can assume $n$ and $R$ are large enough \footnote{For $R$ not big, we could choose $C$ in \eqref{eqn: poly est on univ cover} large enough such that \eqref{eqn: poly est on univ cover} holds.} such that for any $z\in \gamma^s_i$, cf. Lemma 2 in \cite{B77}, \begin{equation}\label{eqn: est d z, [x,z]}
d_{W^u}(z,[x,z])\leq 2C(R^2+1)
\end{equation}
Then by the unstable version of Lemma \ref{lemma: poly est within Ws}, there are constants $C_1, l_1>0$ such that for any $z\in \gamma^s_i$ there is a piecewise $C^1$ unstable path $\gamma^u_z:I\to W^u(z)$ such that 
\begin{equation}\label{eqn: devide z, [x,z]}
\gamma^u_z(0)=z, \gamma^u_z(1)=[x,z], \gamma^u_z([\frac{i}{k}, \frac{i+1}{k}])\subset W^u_i
\end{equation}
and for each $i$ we have 
\begin{equation}\label{eqn: est devid gamma z u}
\text{length}(\gamma^u_z([\frac{i}{k}, \frac{i+1}{k}])\leq C_1(R^{l_1}+1)
\end{equation}

Therefore there exists $C_2$ and $l_2$ large enough which do not depend on the choices of $x,y, x_i, z$ such that for any $z\in \gamma_i^s$, any coarse Lyapunov distribution $E^s_j$ within $W^s$, we have
\begin{eqnarray}\label{eqn: est dhu z [x,z] on Ej}
\|dh^u_{z,[x,z]}|_{E^s_j}\|&=&\|dh^u_{\gamma^u_z(\frac{k-1}{k}), \gamma^u_z(1)}\circ \cdots \circ dh^u_{\gamma^u_z(0), \gamma^u_z(\frac{1}{k})}|_{E^s_j}\|  \\\nonumber
&\leq & \prod_{i=1}^k \|dh^u_{\gamma^u_z(\frac{i-1}{k}), \gamma^u_z(\frac{i}{k})}|_{E^s_j} \|~~\text{we use joint integrability here}\\ \nonumber
&\leq & C_2 (\max(\ln R,0)^{l_2}+1) ~~~~\text{by Lemma \ref{lemma: tor cas Hol est} and \eqref{eqn: est devid gamma z u}}\\\nonumber
&\leq &C_2((\ln R)^{l_2}+1)~~~~\text{(we can assume here $R>1$)}
\end{eqnarray}
By continuity of coarse Lyapunov distributions we know the angles between any pair of subspaces $$\oplus_{j\in J}E^s_j, \oplus_{j\in J'}E^s_{j}, J\cap J'=\emptyset$$ is uniformly bounded away from $0$. Therefore by \eqref{eqn: est dhu z [x,z] on Ej} we know there exists $C_3>0$ which does not depend on the choice of $x,y,z$ such that 
\begin{equation}\label{eqn: est final dhu z [x,z]}
\|dh^u_{z,[x,z]}\|\leq C_3(\ln R)^{l_2}+1)
\end{equation}
Compare \eqref{eqn: est final dhu z [x,z]} with \eqref{eqn: size est xi,i+1}, \eqref{eqn: sum of s path} we have 
\begin{eqnarray*}
&&\sum_{i}d_{W^s}([x,x_i], [x,x_{i+1}])\\
&\leq & \sum_i d_{W^s}(x_i, [x_i,x_{i+1}])\cdot \sup_{z\in \gamma^s_i} \|dh^u_{z, [x,z]}\| ~~\text{by \eqref{eqn: size est xi,i+1}}\\
&\leq &  C_3((\ln R)^{l_2}+1)\cdot \sum_i d_{W^s}(x_i, [x_i,x_{i+1}])~~\text{by \eqref{eqn: est final dhu z [x,z]}}\\
&\leq & C_3C_0\cdot R((\ln R)^{l_2}+1)~~\text{by \eqref{eqn: sum of s path}}\\
&\leq & C(R^{\frac{3}{2}}+1)~~\text{for $C$ large}
\end{eqnarray*} 
As a corollary, we have that $$d_{W^s}(x,[x,y])\leq \sum_{i}d_{W^s}([x,x_i], [x,x_{i+1}])<C(R^2+1)$$
here without loss of generality we assume that $R>1$. By the same method, we can also prove that $d_{W^u}([x,y],y)<C(R^2+1)$, which contradicts with our choice of $x,y$. In summary we could find $C$ large enough such that \eqref{eqn: poly est on univ cover} holds on $\hat{M}$.
\end{proof}
\end{proof}

\section{Resonance free case}\label{sec:res-free}
In this section we prove Theorem \ref{main resonance free case}. Passing to a finite cover and taking restriction to a finite index subgroup of $\ZZ^k$ if necessary we could assume all the properties in Theorem \ref{thm: osl spltt cont} holds for $M$ and $\al$. Resonance free condition and TNS condition implies there is no proportional Lyapunov functionals. Therefore coarse Lyapunov splitting of $D\al$ on $TM$ is actually Oseledec splitting. By Theorem \ref{thm: osl spltt cont} we know $D\al$ has polynomial deviation with each coarse Lyapunov distribution.

\subsection{Smoothness of Lyapunov foliations}\label{sec: smth fol res free}
The first step is the smoothness of coarse Lyapunov distribution. The basic idea is to use $C^r$ section theorem in \cite{HPS} and Weyl chamber picture for resonance free action, which is similar to  \cite{KS07} and section 5.3 of \cite{DX}. For later use we give an outline here.

We consider a generic two-dimensional subspace $P$ in $\RR^k$ such that $P$ intersects each Lyapunov hyperplanes along distinct lines. In addition, since $\al$ is resonance free, $P$ can be chosen such that for any $b\in \ker \chi_1\cap P-\{0\}, \chi_i(b)\neq \chi_j(b)$ for any $\chi_i\neq \chi_j$. For any Lyapunov functional $\chi_j$, we denote by $E_j$ the corresponding Lyapunov distribution, $\H_j$ the Lyapunov half space such that $\chi_j$ on $\H_j$ is negative. $H_j:=\H_j\cap P$ is the half-plane in $P$. Now we reorder these halfplanes counterclockwisely such that $H_1$ is the half space corresponding to $E^1$. Then by TNS condition there exists a unique $i > 1$ such that $$\cap_{1\leq j\leq i}H_j\cap\cap_{j'>i}-H_{j'}\neq\emptyset$$

For any element $a\in \cap_{1\leq j\leq i}-\mathcal{H}_j\cap\cap_{j'>i}\mathcal{H}_{j'}$, by our assumption of $i$, $$\oplus_{1\leq j\leq i} E_j=E^u_a$$ Then  $\oplus_{1\leq j\leq i} E_j$ is uniformly $C^{\infty}$ along $W^u_a$  and in particular along $W^1$.

We choose a unit vector $b\in \ker\chi_1\cap P$ such that $b\in H_j$ for any $2\leq j\leq i$. By our choice of $P$ we know for any $j',j''\geq l+1$, $\chi_{j'}(b)\neq \chi_{j''}(b)$. Therefore we could reorder the indices $1,\dots, i$ by $j_i, j_{i-1},\cdots, j_1$ such that $$\chi_{j_i}(b)<\cdots <\chi_{j_{2}}(b)<\chi_{j_1}(b)=0$$

We consider the following corollary of $C^r-$section theorem in \cite{HPS}, or cf. Corollary 5.6 in \cite{DX}.

\begin{lemma}\label{coro: coro cr section}Let $f$ be a $C^{\infty}$ diffeomorphism of a compact smooth manifold $M$. Let $W$ be an $f-$invariant topological foliation with uniformly $C^{\infty}-$leaves and  $\|Df |^{-1}_{TW(x)}\|:= \al_x$ for all $x\in M$. Let $E^1$ and $E^2$ be continuous f-invariant distributions on $M$ such that the distribution $E = E^1\oplus E^2$ is uniformly $C^\infty$ along $W$ and $E^1\oplus E^2$ is a dominated splitting in the sense that for any $x \in M$,
$$k_x:=\frac{\max_{v\in {E}^2(x), \|v\|=1}\|Df(v)\|}{\min_{v\in {E}^1(x), \|v\|=1}\|Df(v)\|}<1$$
If $\sup_{x\in M}k_x\al_x^r<1$.
Then $E^1$ is uniformly $C^r$ along the leaves of $W$.
\end{lemma}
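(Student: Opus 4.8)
The plan is to exhibit $E^1$ as the unique invariant section of a graph transform on an auxiliary bundle that is $C^\infty$ along the leaves of $W$, and then to invoke the $C^r$-section theorem of Hirsch--Pugh--Shub in its leafwise form, the hypothesis $\sup_x k_x\al_x^r<1$ being exactly the $r$-domination condition that makes that theorem apply.

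First I would set up the bundle. Put $d_1=\dim E^1$ and let $\pi\colon\mathcal{G}\to M$ be the Grassmann bundle whose fiber $\mathcal{G}_x$ is the set of $d_1$-dimensional subspaces of $E(x)=E^1(x)\oplus E^2(x)$. Since $E$ is uniformly $C^\infty$ along the leaves of $W$ and these leaves have bounded geometry, the restriction of $\mathcal{G}$ to each leaf $W(x)$ is a $C^\infty$ fiber bundle with estimates uniform in $x$, and $x\mapsto E^1(x)$ is a continuous section of $\mathcal{G}$. Because $Df$ is $C^\infty$, preserves $E$, and maps each $W$-leaf $C^\infty$-diffeomorphically onto a $W$-leaf, pushing subspaces forward by $Df$ defines a bundle map $\mathcal{F}\colon\mathcal{G}\to\mathcal{G}$ over $f$ which is $C^\infty$ along the leaves of $W$, and the $Df$-invariance of $E^1$ says precisely that $x\mapsto E^1(x)$ is $\mathcal{F}$-invariant.

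Next comes the fiber-contraction estimate and the application of the section theorem. Near $E^1(x)$ represent a nearby subspace as the graph of a linear map $S\colon E^1(x)\to E^2(x)$, with $S=0$ corresponding to $E^1(x)$; to first order at $S=0$ the graph transform acts by $S\mapsto (Df|_{E^2(x)})\circ S\circ (Df|_{E^1(x)})^{-1}$, whose operator norm is $\|Df|_{E^2(x)}\|\cdot\|(Df|_{E^1(x)})^{-1}\|=k_x$ by definition of $k_x$. As $\sup_x k_x<1$, on an $\mathcal{F}$-forward-invariant neighbourhood $\mathcal{N}$ of the invariant section, in an adapted leafwise metric, $\mathcal{F}$ is a fiber contraction, and $E^1$ is the unique continuous $\mathcal{F}$-invariant section in $\mathcal{N}$. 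Now I would run the $C^r$-section theorem along $W$: writing $(\mathcal{F}_*\sigma)(x)=\mathcal{F}_{f^{-1}(x)}(\sigma(f^{-1}(x)))$, the leading term in the $r$-th leafwise derivative of $\mathcal{F}_*\sigma$ at $x$ is $D_{\mathrm{fiber}}\mathcal{F}$ applied to $D^r\sigma$ at $f^{-1}(x)$ composed with $(Df^{-1}|_{TW})^{\otimes r}$, and since $\|Df^{-1}|_{TW(x)}\|=\al_{f^{-1}(x)}$ its norm is bounded by $k_{f^{-1}(x)}\al_{f^{-1}(x)}^r$ times that of $\sigma$; lower-order terms are absorbed by the usual fiber-contraction bootstrap. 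Hence $\mathcal{F}_*$ is a contraction on the space of sections of $\mathcal{N}$ that are $C^r$ along $W$ with uniformly bounded leafwise $C^r$-norm, with contraction constant governed by $\sup_x k_x\al_x^r<1$; its unique fixed point is $C^r$ along $W$ with uniform estimates and must coincide with the continuous invariant section $E^1$. Thus $E^1$ is uniformly $C^r$ along the leaves of $W$.

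The step I expect to be the main obstacle is the leafwise bookkeeping: one must check that $\mathcal{G}$, $\mathcal{F}$, and the base dynamics genuinely carry the claimed regularity \emph{along} the leaves of $W$ — which works because $W$ is $f$-invariant and $E$ is $C^\infty$ along $W$ — and then quote the appropriate uniform, leafwise version of the $C^r$-section theorem (as in \cite{HPS}, or in the form used in \cite{KS07, KalSpa, DX}) rather than its version over a compact manifold. Getting the role of $Df^{-1}|_{TW}$ right, so that the theorem's domination condition is exactly $\sup_x k_x\al_x^r<1$, is the only place where one must be attentive to the direction of the estimates; the remainder is the standard adapted-metric and bounded-geometry argument.
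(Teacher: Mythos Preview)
Your proposal is correct and is precisely the argument the paper has in mind: the paper does not give a proof but simply states the lemma as a corollary of the $C^r$-section theorem in \cite{HPS} (and refers to Corollary~5.6 in \cite{DX}), and what you have written is exactly the standard derivation of that corollary---representing $E^1$ as the invariant section of the graph transform on the Grassmann bundle of $E$ along $W$ and reading off the domination hypothesis $\sup_x k_x\al_x^r<1$ from the leafwise $C^r$-section theorem.
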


We consider an arbitrary $m$, $1<m< i$ and apply Lemma \ref{coro: coro cr section} to $f=\tilde{\al}(b), W=W^1, E^1=\oplus_{s=1}^m E_{j_s}, E^2=\oplus_{s=m+1}^i E_{j_s}$. Notice that by polynomial deviation estimate in Theorem \ref{thm: osl spltt cont} we have
\begin{eqnarray*}
\|D\tilde{\al}(b)|^{-1}_{TW(x)}\|=\|D\tilde{\al}(b)|^{-1}_{E^{\chi_1}(x)}\|&\leq& O(\|b\|^L) \text{ for $\|b\|$ large and some $L>0$.}\\
\|D\tilde{\al}(b)(v)\|&\geq& O(e^{\chi_{j_m}(b')})\text{ for any  unit vector $v\in E_1$.}\\
\|D\tilde{\al}(b)(v)\|&\leq& O(\|b\|^L\cdot e^{\chi_{j_{m+1}}(b)})\\
&&\text{ for any $v\in E_2, \|v\|=1$ and some $L>0$.}
\end{eqnarray*}
Then take $b$ such that $\|b\|$ large enough, by Lemma \ref{coro: coro cr section} (if necessary we could replace $b$ by $nb$ for $n$ large) we know $E^1=\oplus_{s=1}^m E_{j_s}$ is uniformly $C^{\infty}$ along the leaves of $W^1$ for any $m<i$.

Similarly by considering $-b$, we have  $\oplus_{s=m+1}^i E_{j_s}$ is uniformly $C^\infty$ along the leaves of $W^1$ for any $m<i$. Therefore by taking intersection, $E_m$ is uniformly $C^{\infty}$ along $W^1$ for any $m\leq i$.

Consider the halfplanes $\{-H_l\}$, mimick the proof above we get for any $j\geq i$, $E_j$ is uniformly $C^{\infty}$ along $W^1$. The same proof holds for any $W^k$. Then by Journ\'e Lemma we get the smoothness of $E_i$.

\subsection{Construction of a smooth invariant connection}
Now we claim that there is a global $\al-$invariant smooth connection on $M$. If it holds then by a rigidity result by Benoist, Labourie \cite{BL} we could conclude that $\al$ is smoothly conjugate to an affine action on an infranilmanifold. Then as in the section 3.3 in \cite{KS07} we can easily prove that this infranilmanifold is finitely covered by a torus.

Firstly we list several useful properties of non-stationary linearization for smooth diffeomorphisms on contracting foliations, under a pointwise $1/2-$pinching assumption, see \cite{S05, KS06} for more details.

Let $f$ be a $C^\infty$ diffeomorphism of a compact smooth manifold $M$, and let $W$ be
a continuous invariant foliation with $C^\infty$ leaves which is contracted by $f$, i.e.
$\|Df|_{TW}\| < 1$ in some Riemannian metric. Moreover we assume that there exist $C > 0$ and  $\gamma<1$ such that for all $x\in M$ and $n\geq 0$,
$$\|(Df^n|_{T_xW})^{−1}\|\cdot \|Df^n|_{T_xW}\|^2 ≤ C\gamma^n.$$
Then for every $x\in M$ there exists a $C^\infty$ diffeomorphism $\H_x : W_x\to T_xW$ such that
\begin{enumerate}
\item $\H_{fx}\circ f \circ \H_x^{-1}= Df|_{T_xW}$,
\item $\H_x(x) = 0$ and $D_x\H_x$ is the identity map,
\item $\H_x$ depends continuously on $x\in M$ in $C^\infty$ topology.
\item Such a family $\H_x$ is unique and depends smoothly on $x$ along the leaves of $W$,
\item The map $\H_y \circ \H_x^{-1} : T_xW\to T_yW$ is affine for any $x\in M$ and $y\in W_x$. Hence
the non-stationary linearization $\H$ defines affine structures on the leaves of $W$.
\end{enumerate}
Then the non-stationary linearization $H$ defines affine structures on the leaves of $W$, Moreover it is a linearization for any diffeomorphism $g$ \textbf{commutes with} $f$, i.e. $$\H\circ g= Dg\circ \H$$

As mentioned above, each Lyapunov distribution is actually a coarse Lyapunov distribution hence integrable. By Theorem \ref{thm: osl spltt cont} we know for any Lyapunov distribution $E$ and its integral Lyapunov manifold $W$, for any $a\in \ZZ^k$ contracts $E$, the cocycle $D\al(a)|_{E}$ satisfies pointwise $\frac{1}{2}-$pinching condition above, there therefore there is a unique $\al-$invariant affine structure on $W$, i.e. for every $x\in M$ there exists a $C^\infty$ diffeomorphism $\H_x : W_x \in T_xW$ satisfies properties of non-stationary linearization we mentioned above.

Now we consider the leafwise flat connection $\nabla_E$ along $W$ induced by the affine structure on $W$. Obviously the parallel transport induced by $\nabla_E$ on $E$ over $W$ is exactly the parallel transport induced by the affine structure: $\H_y\circ \H_x^{-1}$ for any $x\in M$ and $y\in M$. A priori $\nabla_E$ is uniformly smooth along $W$ but only continuous on ambient $M$. 
\begin{lemma}\label{lemma: r-f case smooth conn}$\nabla_E$ is smooth on $M$.
\end{lemma}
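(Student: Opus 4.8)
The plan is to combine the leafwise smoothness of $\nabla_E$ with Journé's regularity lemma, the only missing ingredient being smoothness of $\nabla_E$ along directions \emph{transverse} to $W=W^\chi$. Recall that $\nabla_E$ is by construction uniformly $C^\infty$ along the leaves of $W$ and is $\al$-equivariant: for every $b\in\ZZ^k$ the map $D\al(b)$ conjugates $\nabla_E|_{W(x)}$ to $\nabla_E|_{W(\al(b)x)}$, and on each leaf $\al(b)$ is affine in the coordinates given by the non-stationary linearization $\H$ (properties (1),(5) and the ``commutes with $f$'' property above). Fix an Anosov element $a\in\ZZ^k$. By Ruelle's Theorem \ref{thm: r79} together with the smoothness of the coarse Lyapunov distributions established in \S\ref{sec: smth fol res free}, the fast stable foliations inside $W^s(a)$ and the fast unstable foliations inside $W^u(a)$ are uniformly $C^\infty$, refine to the coarse Lyapunov foliations, and $W^s(a),W^u(a)$ jointly integrate to $M$. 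Hence, by iterating Journé's lemma along this filtration, it suffices to prove that $\nabla_E$ is uniformly $C^\infty$ along every coarse Lyapunov foliation $W^\psi$.

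First I would fix such a $W^\psi$. By the TNS condition there is an element $a$ with $E^\chi,E^\psi\subset E^s_a$, and, adjusting $a$ as in Proposition \ref{prop: E12 splt} and using the standard form of Theorem \ref{thm: osl spltt cont}, one arranges that $D\al(a)|_{E^\chi}$ is fiber bunched over $W^\psi$, so that the $W^\psi$-holonomies between leaves of $W^\chi$ inside the integral manifold $W^\chi\oplus W^\psi$ (whose leaves are uniformly $C^\infty$) are well defined and their derivatives are the corresponding cocycle Holonomies, as in Lemma \ref{lemma: Hlmy 2nd def}. The crucial claim is that every such holonomy $h\colon W^\chi(x)\to W^\chi(x')$ is \emph{affine} for the affine structures $\H_x,\H_{x'}$. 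Granting this, $h$ carries $\nabla_E|_{W^\chi(x)}$ to $\nabla_E|_{W^\chi(x')}$; since $h$ depends smoothly on the base point moved along $W^\psi$ and along $W^\chi$ (holonomy of a $C^\infty$ foliation with $C^\infty$ leaves), the connection form of $\nabla_E$ is smooth in the $W^\chi\oplus W^\psi$ directions, and one more application of Journé's lemma on the leaf $W^\chi\oplus W^\psi$ shows that $\nabla_E$ is uniformly $C^\infty$ along $W^\psi$.

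To prove the claim I would pass to the linearizing charts and set $\phi:=\H_{x'}\circ h\circ\H_x^{-1}$, a $C^\infty$ diffeomorphism between neighbourhoods of the origins in $T_xW$ and $T_{x'}W$. The consistency of the holonomy family gives $\al(na)\circ h=h_n\circ\al(na)$, where $h_n$ is the $W^\psi$-holonomy between $W^\chi(\al(na)x)$ and $W^\chi(\al(na)x')$; since $x'\in W^\psi(x)\subset W^s_a(x)$, the points $\al(na)x,\al(na)x'$ and hence the two leaves come together exponentially fast, so $h_n\to\mathrm{id}$ in $C^1$ near the origin at a uniform exponential rate. Using $\H_{\al(na)y}\circ\al(na)=D\al(na)|_{E_y}\circ\H_y$ on each leaf, this rewrites as
\[
\phi=\bigl(D\al(na)|_{E_{x'}}\bigr)^{-1}\circ\phi_n\circ D\al(na)|_{E_x},\qquad \phi_n:=\H_{\al(na)x'}\circ h_n\circ\H_{\al(na)x}^{-1}.
\]
Now $\phi_n\to\mathrm{id}$ in $C^1$ near the origin and has uniformly bounded $C^2$-norm on a fixed ball (bounded geometry of the leaves and uniformity of $\H$), while by the polynomial deviation estimate \eqref{eqn: ply dev within Osl space} the maps $D\al(na)|_{E_x},D\al(na)|_{E_{x'}}$ are conformal up to a polynomial factor with norm $\asymp e^{n\chi(a)}$ and $\chi(a)<0$. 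Conjugating a map that is $\mathrm{id}+o(1)$ near the origin, with uniformly controlled nonlinearity, by such polynomially-conformal contractions forces the linear part to converge to its value while the quadratic remainder is scaled by $\mathrm{poly}(n)\,e^{n\chi(a)}\to0$; hence in the limit $\phi$ has vanishing second derivative, i.e. $\phi$, and therefore $h$, is affine.

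The main obstacle is exactly this claim — the affineness of the transverse holonomies — which is where the non-uniform, polynomial nature of the problem on a general manifold enters: on an infranilmanifold it would be immediate from algebra, whereas here one must exploit the rigidity of the non-stationary linearization, and the only available leverage is the near-conformality of $D\al$ on each coarse Lyapunov bundle supplied by Theorem \ref{thm: osl spltt cont}. A secondary point is to check that $W^\chi\oplus W^\psi$ is genuinely a topological foliation with uniformly $C^\infty$ leaves in the resonance-free setting, which follows from the smoothness of the coarse Lyapunov distributions of \S\ref{sec: smth fol res free} (one may first carry out the argument inside the smooth foliations $W^s(a)$, $W^u(a)$ and their smooth fast subfoliations, reducing to pairs that are automatically jointly integrable). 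Once all the partial connections $\nabla_{E_j}$ are smooth on $M$, assembling them into a single $\al$-invariant $C^\infty$ connection and invoking the Benoist--Labourie rigidity theorem is routine.
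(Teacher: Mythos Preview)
Your approach differs from the paper's, though both hinge on the same geometric fact: the $W_i$-holonomy between $W$-leaves is affine for the non-stationary linearization. The paper establishes this (implicitly) by identifying the parallel transport $H_{xy}$ with the cocycle Holonomy of $D\al|_E$ along $W$ and then invoking uniqueness of Holonomy (Proposition~\ref{prop: properties holonomies}) to obtain the intertwining relation $H_{x'y'}=dh^i_{yy'}\circ H_{xy}\circ dh^i_{x'x}$ (Lemma~\ref{lemma: smth Hxy W1,2}); your conjugation estimate is an independent and direct proof of the same fact. The paper then takes a different route to smoothness: it lifts to the total space of $E$, defines horizontal distributions $\tE,\tE_{1i},\tE_{2j}\subset T(E)$ via these Holonomies, observes that they form a Lyapunov-type splitting for the skew-product $D\al|_E$ with the same functionals and polynomial deviation as on the base, and re-runs the $C^r$-section-theorem argument of \S\ref{sec: smth fol res free} on the bundle (Lemma~\ref{lemma: TNS PCF arg inte}). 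Since $\nabla_E$ is exactly the horizontal distribution $\tE$, its smoothness follows.

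The genuine gap in your proposal is the assumed joint integrability of $E^\chi\oplus E^\psi$ for an \emph{individual} coarse Lyapunov $\psi$. In the resonance-free setting this is not proved anywhere in the paper (pairwise joint integrability is the hypothesis of Theorem~\ref{main toral case}, not of Theorem~\ref{main resonance free case}), and it does not follow from smoothness of each $E^j$ alone; your suggested reduction to fast subfoliations does not help either, since the fast flag inside $W^s(a)$ is nested rather than giving a complementary pair containing $W^\chi$. The clean repair is to abandon individual $W^\psi$ and run your conjugation argument only for the \emph{aggregate} foliations $W_1,W_2$ of Proposition~\ref{prop: E12 splt}: there $W\oplus W_i=W^s_{c_i}$ genuinely is a foliation with smooth leaves and $W_i$ is smooth inside it, so your estimate (with $a=c_i$) shows the $W_i$-holonomy between $W$-leaves is affine, whence $\nabla_E$ is uniformly $C^\infty$ along $W_i$. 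Then Journ\'e on $W,W_1$ inside $W\oplus W_1$, followed by Journ\'e on $W\oplus W_1$ and $W_2$, finishes. With that fix your argument is complete and arguably more transparent than the paper's bundle-lift; what the paper's route buys is that the $C^r$-section theorem only needs a dominated splitting, never a transverse foliation pair.
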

\begin{proof}Our strategy to prove the smoothness of $\nabla_E$ is the following: a priori $\nabla_E$ is only defined leafwisely, i.e. for a path $\gamma$ not contained in $W$, the parallel transport $\Gamma(\gamma)$ of connection $\nabla_E$ is not well-defined. We will extend $\nabla_E$ to a new smooth connection $\tilde{\nabla}_E$ of the bundle $E$ over ambient $M$ by considering suitable Holonomies. Therefore $\nabla_E$ as the restriction of $\tilde{\nabla}_E$ on $W$ is also smooth on $M$.

We consider the parallel transport $H_{xy}:=\H_y\circ \H_x^{-1}$ induced by $\nabla_E$. We slightly abuse the notation $H_{xy}$ here because the parallel transport in fact coincides with the Holonomy of the fiber-bunched cocycle $D\al(a)$ along $W$. \footnote{Since $W$ subfoliates $W^s_a$, we could freely define the Holonomy map along $W^s_a$, hence $W$.} The reason is that for any $a$ contracts $E$, for any $x\in M, y\in W_x$ 
$$H_{xy}=D\al(-na)\circ H_{\al(na)\cdot x, \al(na)\cdot y}\circ D\al(na)$$
and $H$ is uniformly $C^1$ along $W$. Therefore along $W$ we know $H_{xy}$ coincides with the definition formula in Proposition \ref{prop: properties holonomies} ($H_{\al(na)\cdot x, \al(na)\cdot y}$ can be viewed as the linear identification from $E_{\al(na)\cdot x}$ to $E_{\al(na)\cdot y}$ which uniformly H\"older close to the identity).

For fixed $E, W$, we apply Proposition \ref{prop: E12 splt} to $E$, then we get the associated splitting $TM=E_1\oplus E\oplus E_2$, foliations $W_1, W_2, W\oplus W_1, W\oplus W_2$ and elements $c_i$ satisfy all the conditions in Proposition \ref{prop: E12 splt}. Then we have 
\begin{lemma}\label{lemma: smth Hxy W1,2}The parallel transport $H_{xy}: E_x\to E_y, y\in W_x$ is uniformly smooth when  $x,y$ moves along $W_1$. Similar property holds for $W_2$.
\end{lemma}
\begin{proof}For fixed $x\in M, y\in W_x$, we take arbitrary $x'\in W_1(x), y'\in W_1(y) $ such that $y'\in W_{x'}$. Pick $c_i$ as in Proposition \ref{prop: E12 splt} and sufficiently close to the hyperplane $L$ corresponds to $E$. As in section \ref{sec: unif bunched, Holo} we denote by $H^j_{c_i, x,y}$ the stable Holonomy for cocycle $D\al(c_i)|_E$ along $W_j$, $h^i$ the holonomy map along $W_i$ between two $W$ leaves. And we denote by $H^s_{c_i,x,y}$ the stable Holonomy for $D\al(c_i)|_E$ along $W^s_{c_i}$. Then by Lemma \ref{lemma: Hlmy 2nd def} and uniqueness of Holonomy for cocycles (Proposition \ref{prop: properties holonomies}) we have that 
\begin{eqnarray*}
H_{x'y'}&=&H^s_{c_1,x',y'}~~(\text{$H$ coincides with the Holonomy of $D\al(a)$ along $W$})\\
&=&H^s_{c_1,y,y'}\circ H^s_{c_1, x,y}\circ H^s_{c_1,x',x}~~(\text{since }x',y',x,y\in W^s_{c_1})\\
&=&H^1_{c_1,y,y'}\circ H_{xy}\circ H^1_{c_1,x',x}~~(\text{by definition})\\
&=&dh^1_{yy'}\circ H_{xy}\circ dh^1_{x'x}~~(\text{by Lemma \ref{lemma: Hlmy 2nd def}})
\end{eqnarray*}
But $dh^1_{yy'}, dh^1_{x'x}$ (uniformly) smoothly depend on $x',y'$ for fixed $(x,y)$, therefore we get that $H_{xy}$ depends on $(x,y)$ (uniformly) smoothly when $x,y$ moves along $W_1\times W_1$.
\end{proof}



By the proof of Lemma \ref{lemma: smth Hxy W1,2}, we know $H^s_{c_1,x,y}, x,y\in W\oplus W_1$ (or $W$) actually induces the smooth integrable horizontal sections $H_{0,1}$ ($H_0$ resp.) of bundle $E$ over $W\oplus W_1$ ($W$ resp.), i.e. for any $x\in M, v\in E_x$, the image of $H^s_{c_1,x,\cdot}(v)$ on $W\oplus W_1$ is the horizontal integrable section $H_{0,1}$ passing through $v$. For smoothness, $H_0$ in fact defined from the parallel transport $H$ induced by $\nabla_E$, therefore $H_0$ is smooth along $W$. For $H_{0,1}$, notice that  $H^s_{c_1}$ coincides with $dh^1$ ($H$) along $W_1$ ($W$ respectively). Then by Journ\'e Lemma we know the image of $H^s_{c_1,x,\cdot}(v)$ on $W\oplus W_1$ is smooth. Integrability is directly from the definition of Holonomy.

Similarly we could define the smooth integrable horizontal sections $H_{0,2}$ ($H_2$) induced by $H^s_{c_2}$  of bundle $E$ over $W\oplus W_2$ ($W_2$ respectively). By uniqueness of Holonomy (Proposition \ref{prop: properties holonomies}) we know $H_{0,2}$ coincides with $H_{0,1}$ and $H_0$ along $W$. We denote by $\tE, \tilde{E}_{0,1}, \tilde{E}_2$ the tangent spaces of $H_0, H_{0,1}, H_2$ respectively. Suppose $E_1=\oplus E_{1i}, E=\oplus E_{2j}$, where $E_{1i}, E_{2j}$ are coarse Lyapunov distributions and $W_{1i}, W_{2j} $ are corresponding Lyapunov foliations. As above we could define the smooth integrable horizontal sections $H_{1i}$ ($H_{2j}$) induced by $H^s_{c_1}$ ($H^s_{c_2}$ resp.) over $W_{1i}$ ($W_{2j}$ resp.). We denote by $\tE_{1i}, \tE_{2j}$ the tangent spaces of $H_{1i}, H_{2j}$ respectively. Then $\tE_{0,1}=\tE\oplus\oplus_i \tE_{1i}, \tE_2=\tE_{2j}$.

Our goal is to prove the following lemma,
\begin{lemma}\label{lemma: TNS PCF arg inte}The distributions $\tilde{E}, \tE_{1i} \tilde{E}_{2j}$ are smooth on $M$.
\end{lemma} 
If Lemma \ref{lemma: TNS PCF arg inte} holds, since the parallel transport of $\nabla_E$ along $W$ is in fact the translation through $H_{0}$, therefore we get the smoothness of $\nabla_E$ from the smoothness of $\tE$ ......$H_{0}$.
\begin{proof}[Proof of Lemma \ref{lemma: TNS PCF arg inte}] In fact the proof here is similar to the proof of Proposition 5.1 in \cite{DX} and discussion in section \ref{sec: smth fol res free}. We view $D\al|_E$ as a skew product system over $\al$ on $M$ where fiber is $E$. $H_0, H_{1i}, H_{2j}$ are $D\al|_E-$invariant foliations which can be viewed  as the lifts of the foliations $W, W_{1i}, W_{2j}$ from the base to the fiber bundle.

So our goal is to prove the smoothness of the $D\al|_E-$invariant foliations $H_0,H_{1i}, H_{2j}$ or their tangent spaces $\tE,\tE_{1i}, \tE_{2j}$. Notice that these foliations are induced from Holonomies. By linearity of Holonomies we know 
\begin{itemize}
\item[\textbf{Fact 1}]$H_0,H_{1i}, H_{2j}$ (hence $\tE,\tE_{1i}, \tE_{2j}$) are smooth along the fiber $E$.
\end{itemize}

By Theorem \ref{thm: osl spltt cont} we know up to a subexponential factor bounded by some polynomial, $\al(E)$ contracts (or expands) $W, W_{1i}, W_{2j}$ (hence $E, E_{1i}, E_{2j}$) with uniform exponential speed. The contracting rate is exactly the Lyapunov functional associated to the foliation. By property (4). of Holonomies in Proposition \ref{prop: properties holonomies}, it is easy to see that $D\al|_E$ also contracts (or expands) $H_0$ with the same rate as $W$. Similar properties also hold for $H_{1i}, H_{2j}$. In particular,

\begin{itemize}
\item[\textbf{Fact 2}]For $\ZZ^k-$action $D\al|_E$ on the fiber bundle $E$ over $M$, the associated Lyapunov distributions are $\tE,\tE_{1i},\tE_{2j}$. The associated Lyapunov functionals of $\tE$ (or $\tE_{1i}, \tE_{2j}$ resp.) are the same as the Lyapunov functionals associated to $E$ (or $E_{1i}, E_{2j}$ resp.) of $\al$, moreover for $D\al|_E$ we have similar \emph{polynomial deviation} property as in Theorem \ref{thm: osl spltt cont} on $\tE,\tE_{1i},\tE_{2j}$.
\end{itemize}

Now we only need to prove that \begin{itemize}
\item[\textbf{Claim}] $\tE$ (or $\tE_{1i}$, $\tE_{2j}$) is uniformly smooth along $H_0, H_{1i'}, H_{2j'}$ for any $i,j, i', j'$.
\end{itemize}
If the claim is true, then by inductive application of Journ\'e Lemma we know $\tilde{E}_{1i}, \tilde{E}_{2j},\tilde{E}$ are smooth distributions since they are smooth along the vertical fiber $E$.

As in section \ref{sec: smth fol res free}, we considering a generic plane $P$ in $\RR^k$ such that $P$ intersects each Lyapunov hyperplanes along distinct lines and for any $b\in \ker \chi_1\cap P-\{0\}, \chi_i(b)\neq \chi_j(b)$ for any $\chi_i\neq  \chi_j$; we could define $\H_j,H_j, \chi_j$ similarly; moreover we could reorder these halfplanes counterclockwisely as in section \ref{sec: smth fol res free}. In particular, we rewrite the distributions in  $\tE, \tE_{1i},\tE_{2j}$ and the associated foliations $H_0, H_{1i}, H_{2j}$ by new symbols
\begin{equation}\label{eqn: nota redef}
L_1,\dots, L_j,\dots;~~F_1,\dots, F_j,\dots
\end{equation}
where $L_j$ ($F_j$) is the distribution (foliation) in $\tE, \tE_{1i},\tE_{2j}$ ($H_0, H_{1i}, H_{2j}$) corresponding to the halfplace $\H_j$.

Then by TNS condition again there exists unique $i$ such that 
$$\cap_{1\leq j\leq i}H_j\cap\cap_{j'>i}-H_{j'}\neq\emptyset$$

Again we pick any element $a\in \cap_{1\leq j\leq i}H_j\cap\cap_{j'>i}-H_{j'}$, then the unstable distribution of the cocycle $D\al(a)|_E$ is $\oplus_{1\leq j\leq i} L_j$. As a result, $\oplus_{1\leq j\leq i} L_j$ is  smooth along $\oplus_{1\leq i} F_j$ which is the integral foliation of $\oplus_{1\leq j\leq i}L_j$. In particular, $\oplus_{1\leq j\leq i} L_j$ is smooth along $F_1$.

Then as in section \ref{sec: smth fol res free} we could pick $b\in \cap_{2\leq j\leq i}H_j\cap \ker \chi_1\cap P$ and then further reorder the indices $1,\dots, i$ by $j_i,\dots, j_1$ such that $\chi_{j_i}(b)<\dots<\chi_{j_1}(b)=0$. Then by the same argument as in section \ref{sec: smth fol res free} we could prove that $\oplus_{s=1}^m L_{j_s}$ (thanks to polynomial deviation) is uniformly smooth along the leaves of $F_1$ for any $m<i$. Similarly by considering $-b$ we have $\oplus_{s=m+1}^iL_{j_s}$ is uniformly smooth along the leaves of $F_1$ for any $m<i$. Therefore by taking intersection, $L_m$ is uniformly smooth along $F_1$ for any $m\leq i$. By considering the halfplanes $\{-H_l\}$ we could get for any $j\geq i$, $L_j$ is uniformly smooth along $F_1$. The same proof also holds for any $F_k$. Then by Journ\'e Lemma we know each $L_j$ is a smooth distribution on the fiber bundle $E$ over $M$.
\end{proof}\end{proof}

We collect all of these $\nabla^i:=\nabla_{E^i}$ together for all Lyapunov distributions $E^i$ and define a new connection $\nabla$ on $TM$ in a canonical way:

$$\nabla_XY:=\sum_i \nabla^i_{X^i}Y^i+ \sum_{i\neq j} \Pi_j [X^i, Y^j] $$
where $\Pi_j$ is the projection onto $E^j$ with respect to the coarse Lyapunov splitting. Since $\nabla^i, E^i$ are $\al-$invariant and smooth, so is $\nabla$. Therefore for by the discussion in the beginning of this section we know a finite cover of $\al$ is smoothly conjugate to affine maps on a torus.


\appendix
\section{Proof of Case 2 of Lemma \ref{lemma: cont red fin cov}}
\begin{proof}
As in section 4.5 of \cite{KS13}, we consider the flag of subspaces $V^i$ spanned by the first $n_i=d_1+\cdots+d_i$ coordinate vectors in $\RR^d$, $i=1,\dots, l$. Here $d_i$ and $d$ are defined as case 1. Consider $G_\ast$ which is the stablizer of this flag in $G$. Then $G_\ast$ contains $G_0$ and therefore we could assume $G_\ast$ is of finite index $k$ in $G$. The orbit of this flag under $G$ consists of $k$ distinct flags in $\RR^d$ which we denote by 
\begin{equation}\label{eqn: distinct flags}
W^j=\{V^{j,1}\subsetneqq \cdots\subsetneqq V^{j,l}=\RR^d \},~~j=1,\dots, k
\end{equation}
Then any $g\in G$ permutes these flags and preserves their union. Without loss of generality we could assume $k\geq 2$.

For $i=1,\dots, l-1$, the subspaces $V^{j,i}$ have dimension $n_i$. Since some of them may coincide, so we denote by $k_i$ the number of distinct one. We set $U^{(i)}:=V^{1,i}\cup \cdots\cup V^{k,i}$. Then as in Case 1, $U^{(i)}$ corresponds to a $D\al-$invariant measurable family $\{\cU^{(i)}_x\}_{x\in M}$ where $\cU^{(i)}_x$ depends measurably on $x$ and is a union of $k_i$ distinct $n_i-$dimensional subspaces of $E_{m_n\chi}$.

As in Case 1, $\{\cU^{(i)}_x\}_{x\in M}$ could be identified with a $\wedge^{d_i}(D\al(c_4))|_{m_n\chi}-$invariant measure $m$ on the bundle $\PP(\wedge^{d_i}(E_{m_n\chi}))$; the cocycle $\wedge^{d_i}(D\al(c_4))|_{m_n\chi}$ is has coincide Lyapunov exponents within $\wedge^{d_i}(E_{m_n\chi})$. In addition $\wedge^{d_i}(D\al(c_4))|_{m_n\chi}$ admits \textbf{uniform} stable Holonomy along $W_2$ and \textbf{non-uniform} unstable Holonomy along $W\oplus W_1$. Then by Proposition \ref{prop: non unif inv prpl} $m$ is $\mu-$almost surely invariant under stable and unstable Holonomy. Therefore mimicking the proofs of Lemmas \ref{lemma: product structure and continuity}, \ref{lemma: hold cont cEi}, we know the family $\{\cU^{(i)}_x\}_{x\in M}$ coincides with a H\"older continuous family $\mu-a.e.$, which we still denote by $\cU^{(i)}$.

Without loss of generality we could assume $\al$ has a fixed point $q$ on $M$ (otherwise we consider the restriction of $\al$ on a finite index subgroup of $\ZZ^k$). As in section 4.5 of \cite{KS13}, there is a normal subgroup $H_q$ of $\pi_1(M,q)$ such that on the finite cover $\bar M$ of $M$  corresponding to $H_q$, the lift $\tilde\cU^{(i)}$ of $\cU^{(i)}$ extends to H\"older continuous $n_i-$dimensional subbundles $\tilde{\cU}^{1,i},\dots,\tilde{\cU}^{k_i,i}$ for each $i$. Moreover for any lift $\tilde{q}$ of $q$, there exists a unique lift $\bal$ of $\al$ on $\bar M$ such that $\tilde{q}$ is an $\bal-$fixed  point \footnote{In fact the argument here is just a special case of  section 4.5 in \cite{KS13}, which also works for general cocycles. Here we only consider the derivative cocycle.}. 

Obviously now $\bal$ preserves the union $
\tilde{\cU}^{(i)}$ of subbundles $\tilde{\cU}^{1,i},\dots,\tilde{\cU}^{k_i,i}$ for each $i$. By continuity of these subbundles there exists $N\in \ZZ^+$ such that the cocycle $D\bal|_{N\cdot \ZZ^k}$ preserves every subbundle $\tilde{\cU}^{j,i}$. Moreover by the discussion in \cite{KS13}, we could even arrange these subbundles $\tilde{\cU}^{j,i}$ into the $D\bal|_{N\cdot \ZZ^k}-$invariant flags $\tilde{\W}^{1},\dots,\tilde{\W}^{k}$ (which corresponding to $W^1,\dots W^k$ in $\RR^d$). \footnote{In \cite{KS13} to arrange $\tilde{\cU}^{j,i}$ we need the ergodicity of $\tilde{\mu}$ which is the lift of the measure on the finite cover. In our setting the ergodicity is coming from that $\tilde{\mu}$ is the lift of Bowen-Margulis measure of an Anosov diffeomorphism hence it is still a Bowen-Margulis measure of the lifted diffeomorphism (since holonomy-invariance of conditional measure on (un)stable foliations is a local property). Then (by Hopf-argument) the Bowen-Margulis measure should be ergodic.}  We fix one of these flags and denote it by $$\tilde{\W}: \{0\}=\cE_0\subsetneqq \cE_1\subsetneqq\cdots \subsetneqq\cE_l=\bar{E}_{m_n\chi}$$
where $\bar{E}_{m_n\chi}$ is the lift of $E_{m_n\chi}$ on $\bar M$. 

It remains to show in each factor bundle $\cE_i/\cE_{i-1}$ there is a continuous conformal structure invariant under the cocycle induced by $D\bal|_{N\cdot \ZZ^k}$. The proof is the same as in \cite{KS13}. Recall that before lifting to $\bar M$, up to a measurable coordinate change the cocycle $D\al|_{N\cdot \ZZ^k}$ is taking values in $G\subset GL(d,\RR)$. $G_\ast, V_i$ are defined as the beginning of this section. Any matrix $A$ in $G_0$ preserves the subspaces $V_i$ and the standard conformal structure $
\sigma$ on $\RR^{d_i}\cong V^i/V^{i-1}$. The orbit of $\sigma$ under the stablizer $G_\ast$ of the flag is a finite set $\mathcal{O}$ in the space of conformal structures on $\RR^{d_i}$ (for more details for the space of conformal structures cf. \cite{T86}, \cite{KS13}). Since $G_\ast$ preserves $\mathcal{O}$, then $G_\ast$ preserves the center $\sigma_\ast$ of the smallest ball containing $\mathcal{O}$, which is unique in the space of conformal structures. We push $\sigma_\ast$ by the action of $G$. Then we obtain the conformal structures $\sigma_j$ on the factors $V^{j,i}/V^{j,i-1}$ of the flags $W^j$. Therefore for any $g\in G$, if $g$ maps $W^j$ to $W^{j'}$ then the factor map $\bar g: V^{j,i}/V^{j,i-1}\to V^{j',i}/V^{j',i-1}$ takes $\sigma_j$ to $\sigma_{j'}$. By the construction of $\tilde\W$ we know (up to a measurable coordinate change) $\tilde\W$ corresponds to one of the flag $W^j$ almost everywhere. Therefore $\sigma_j$ on $V^{j,i}/V^{j-1,i}$ corresponds to a measurable $D\bal|_{N\cdot \ZZ^k}-$invariant conformal structure $\tau$ on $\cE_{i}/\cE_{i-1}$. As in Case 1, $\tau$ could be identified with a probability measure on $\PP(\cE_i/\cE_{i-1})$ which is invariant under the action induced by $D\bal|_{N\cdot \ZZ^k}$. Since the covering map is a local isometry, we have that $D\bal(Nc_4)|_{\cE_i/\cE_{i-1}}$ has coincide Lyapunov exponents in $\cE_i/\cE_{i-1}$ with respect to $\tilde{\mu}$ (the lift of $
\mu$ on $\bar M$). Moreover we could assume that $D\bal(Nc_4)|_{\cE_i/\cE_{i-1}}$ admits uniform stable Holonomy along $W_2$ and non-uniform unstable Holonomy $W\oplus W_1$ as in Case 1 (if necessary we could pick $c_4$ sufficiently close to the Weyl chamber wall). Therefore by previous redefining argument we know $\tau$ coincides with a H\"older continuous conformal structure on $\cE_i/\cE_{i-1}$ $\tilde{\mu}-$a.e. which we still denote by $\tau$, obviously $\tau$ is $D\bal|_{N\cdot 
\ZZ^k}$ invariant, which completes the proof of Lemma \ref{lemma: cont red fin cov}. 

In summary, our proof for Case 2 is basically the same as section 4.5 in \cite{KS13}. The main difference is that here we use our non-uniform redefining argument (rather than the uniform one in \cite{KS13}) to prove the existence of H\"older continuous extension of the dynamically-defined objects.
\end{proof}
 
\section{Proof of Lemma \ref{lemma: coh eqn closing}}\label{sec: app coh eqn cls}
\begin{proof}The proof is basically the same as section 3.2 of \cite{KS07}. For completeness we give a proof here. By transitivity of $\{\tal(ta_0)\}$, we pick a point $x^\ast$ with dense orbit $\mathcal{O}^\ast=\{\tal(ta_0)\cdot x^\ast\}$. We choose $$\phi(\tal(ta_0)\cdot x^\ast):=q(x^\ast, ta_0)^{-1}$$
Then by the construction we know $\phi$ satisfies \eqref{equation: coho eqn gene sing a} on $\mathcal{O}^\ast$. Then as in section 3.2 of \cite{KS07}, to extend $\phi$ on whole $\tM$ in a H\"older continuous way, we only need to prove there exist $\epsilon_0, K>0$ such that for any $x\in \mathcal{O}^\ast, t>0$, if $\dist(x, \tal(a)\cdot x)<\epsilon_0$ for some $x\in \tM$ and $a=ta_0$ with $t>0$, then 
\begin{equation}\label{eqn: KS07 3.4}
\|\log q(x,a)\|< K\cdot \dist(x,ax)^\beta
\end{equation}
Here $\beta$ is the H\"older exponent for the bundle $E$ and $\cE_{i-1}, \cE_{i}$. We need the following lemma.
\begin{lemma}[Lemma 3.6 of \cite{KS07}]\label{lemma: L3.6 KS07}Let $W$ be a coarse Lyapunov foliation and $E=TW$. Let $a_0$ be a generic singular element in the corresponding Lyapunov hyperplane. Then there exist positive constants $\epsilon_0, C, \lambda$ such that for any $x\in \tM$ and $a=ta_0$ with $\dist(x, \tal(a)\cdot x)=\epsilon<\epsilon_0$, there exist a $y\in \tM$ and a $\delta\in \RR^k$ such that 
\begin{enumerate}
\item $\dist(x,y)<C\epsilon$;
\item $\tal(a+\delta)\cdot y\in W(y)$;
\item $\|\delta\|< C\epsilon$l
\item $\dist(\tal(sa_0)\cdot x, \tal(sa_0)\cdot y)<C\epsilon e^{-\lambda\min\{s,t-s\}}$ for any $0\leq s\leq t$;
\item $\|\log q(x,a)-q(y,a)\|<C\epsilon^\beta$, where $q$ is defined in \eqref{eqn: def q clos}.
\end{enumerate}
\end{lemma}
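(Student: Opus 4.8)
The plan is to deduce this from an Anosov‑type closing lemma for the partially hyperbolic flow generated by $a_0$, followed by an integral estimate for item (5). The first step is to exhibit the partially hyperbolic structure. Since $a_0$ is a generic singular element it lies in exactly one Lyapunov hyperplane, namely $L=\ker\chi$; as there are only finitely many coarse Lyapunov functionals and, by the TNS condition, none is negatively proportional to $\chi$, the coarse Lyapunov distribution $E=E^\chi$ is the only one on which $a_0$ has vanishing Lyapunov exponent, while every other coarse Lyapunov functional $\chi'$ satisfies $|\chi'(a_0)|\ge c_0$ for a uniform $c_0>0$. Write $T\tM=E^{ss}\oplus(E\oplus T\mathcal{O})\oplus E^{uu}$, where $E^{ss}$ (resp.\ $E^{uu}$) is the sum of the coarse Lyapunov distributions $E^{\chi'}$ with $\chi'(a_0)<0$ (resp.\ $>0$) and $T\mathcal{O}$ is the tangent distribution to the $\RR^k$‑orbits, on which $D\tal(a_0)$ is the identity. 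By Lemma~\ref{lemma:sub exp grwth on plane}, for every $\epsilon'>0$ both $\|D\tal(ta_0)|_E\|$ and $\|(D\tal(ta_0)|_E)^{-1}\|$ are bounded by $C(\epsilon')e^{\epsilon'|t|}$, so for $\epsilon'<c_0$ the flow $\{\tal(ta_0)\}_{t\in\RR}$ is partially hyperbolic in a suitable adapted metric with center $E^c:=E\oplus T\mathcal{O}$; this center distribution is integrable, its leaf foliation $\mathcal{W}^c$ having as leaves the $\RR^k$‑orbits of the leaves of $W$. Consequently $\tal(a_0)$ possesses strong stable, strong unstable, center‑stable and center‑unstable invariant foliations $W^{ss},W^{uu},W^{cs},W^{cu}$ with uniformly transverse local leaves and a uniform local product structure; $W\subset\mathcal{W}^c=W^{cs}\cap W^{cu}$ as a subfoliation, and $W^{ss},W^{uu},\mathcal{W}^c$ are $\tal(\RR^k)$‑invariant by commutativity.

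The second step is the closing construction, a standard Anosov‑type argument adapted to the center. Given $x$ with $\dist(x,\tal(a)x)=\epsilon<\epsilon_0$, $a=ta_0$, the uniform local product structure of $W^{ss},W^{uu},\mathcal{W}^c$ lets one set up a contraction on a $C\epsilon$‑ball of $x$ inside a local transversal to the $\RR^k$‑orbit; its fixed point $y$ satisfies $\dist(x,y)<C\epsilon$, has $\tal(a)y$ lying on $\mathcal{W}^c_{\mathrm{loc}}(y)$ at distance $<C\epsilon$ from $y$, and has no center displacement from $x$, so that by the uniform contraction/expansion rates of $\tal(a_0)$ on $W^{ss}/W^{uu}$ it $\epsilon e^{-\lambda\min\{s,t-s\}}$‑shadows $x$ along the $a_0$‑flow, which is item (4) (the $W^{ss}$‑part being $<C\epsilon$ at $s=0$ and contracting like $e^{-\lambda s}$, the $W^{uu}$‑part being $<C\epsilon$ at $s=t$ and contracting backwards like $e^{-\lambda(t-s)}$). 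Finally one converts ``$\tal(a)y\in\mathcal{W}^c_{\mathrm{loc}}(y)$'' into item (2): writing the center holonomy carrying $\tal(a)y$ back onto $W(y)$ as a flow by some $b\in\RR^k$ with $\|b\|<C\epsilon$ (the $T\mathcal{O}$‑part of $E^c$) followed by a displacement along $W$, and setting $\delta:=-b$, one obtains $\tal(a+\delta)y\in W(y)$ and $\|\delta\|<C\epsilon$, that is, items (1)--(3).

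For item (5), write $\log q(z,ta_0)=\int_0^t\rho(\tal(sa_0)z)\,ds$, where $\rho(w):=\frac{d}{ds}\big|_{s=0}\log q(w,sa_0)$ is the infinitesimal density of the Jacobian $q$ from \eqref{eqn: def q clos} along the $a_0$‑orbit; since $\tal$ is $C^\infty$ and the bundle in \eqref{eqn: def q clos} is $\beta$‑H\"older, $\rho$ is $\beta$‑H\"older on $\tM$. Then, using item (4),
\[
\bigl|\log q(x,a)-\log q(y,a)\bigr|\le\int_0^t\bigl|\rho(\tal(sa_0)x)-\rho(\tal(sa_0)y)\bigr|\,ds\le L\int_0^t\bigl(C\epsilon\,e^{-\lambda\min\{s,t-s\}}\bigr)^{\beta}\,ds\le K\epsilon^{\beta},
\]
with $K$ independent of $t$ because $\int_0^\infty e^{-\lambda\beta s}\,ds<\infty$.

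The main obstacle is the closing step. One must verify that the slack ``$\in W(y)$'' in (2) together with the freedom in the vector $\delta$ precisely absorbs the two non‑hyperbolic directions --- the neutral orbit direction $T\mathcal{O}$ (corrected by $\delta$) and the subexponentially‑behaved direction $E$ (corrected by allowing the return into the leaf $W(y)$ rather than to $y$ itself) --- so that the shadowing point $y$ carries no center component and (4) holds with a clean exponential and no polynomial loss. This requires controlling the partially hyperbolic local product structure uniformly in the base point and matching it carefully against the explicit integrable center foliation $\mathcal{W}^c$.
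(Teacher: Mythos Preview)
Your proposal is correct and follows essentially the same approach as the paper. The paper's own proof simply defers items (1)--(4) to Lemma~3.6 of \cite{KS07} verbatim and notes that (5) ``could be proved from the standard estimate for a H\"older contraction coefficient along exponentially close orbits''; you have supplied exactly that argument, spelling out the partially hyperbolic closing construction (with center $E\oplus T\mathcal{O}$, the $W$-return absorbing the $E$-displacement and $\delta$ absorbing the orbit displacement) and writing (5) as the H\"older integral estimate $\int_0^t(C\epsilon e^{-\lambda\beta\min\{s,t-s\}})\,ds\le K\epsilon^\beta$.
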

\begin{proof}The proof of inequalities (1), (2), (3), (4) of Lemma \ref{lemma: L3.6 KS07} are exactly the same as that of Lemma 3.6 in \cite{KS07}. Although the definition of $q$ here is slightly different from that of in \cite{KS07}, but as in \cite{KS07}, (5). could be proved from the standard estimate for a H\"older contraction coefficient along exponentially close orbits. 
\end{proof}
We come back to the proof of Lemma \ref{lemma: coh eqn closing}. Our notation is as in Lemma \ref{lemma: L3.6 KS07}. Let $b=a+\delta$, since $\delta $ is small we have $|\log q(y,a)-\log q(y,b)|<C_1\epsilon$. Then by (5). of Lemma \ref{lemma: L3.6 KS07} we have $|\log q(x,a)-\log q(y,b)|<C_2\epsilon^\beta$. Therefore to prove \eqref{eqn: KS07 3.4} we only need to show that 
\begin{equation}\label{eqn: KS07 3.6}
|\log q(y,b)|<C_3\epsilon^\beta 
\end{equation}
As in \cite{KS07}, we take an Anosov element $c$ contracts $W$, and pick $y_\ast:=\lim \tal(t_nc)\cdot y$ to be an accumulation point of the $c-$orbit of $y$. Then we have $b(y_\ast)=y$. In fact since $\tal(b)\cdot y\in W(y)\subset W^s_c(y)$ we have $\tal(b)\cdot y_\ast=\lim \tal(b)(\tal(tc_n)
\cdot y)=\lim \tal(t_nc)(\tal(b)\cdot y)=y_\ast$.

Our goal is to prove the following estimate which imply \eqref{eqn: KS07 3.6}.
\begin{enumerate}
\item $q(y_\ast, b)$ is close to $1$.
\item $q(y_\ast, b)$ is close to $q(y,b)$.
\end{enumerate}

Since $\delta$ is small, we could assume that $b$ is not contained in any Lyapunov hyperplane except possibly for the Lyapunov hyperplane $\mathcal{L}$ corresponding to $W$. 
\begin{itemize}
\item If $b\in \mathcal{L}$, we claim that $q(y_\ast,b)=1$. If it is not the case, since $y_\ast$ is $\tal(b)$ fixed point and $\cE_i(y_\ast), \cE_{i-1}(y_\ast)$ are $D\tal(b)-$invariant, we have that $q(y_\ast,tb)=q(y_\ast,b)^t$, hence $q(y_\ast,tb)$ will tend to $\infty$ or $0$ exponentially fast, which contradicts with Lemma \ref{lemma:sub exp grwth on plane}\footnote{Exponential growth (or decrease) of $q(y_\ast,tb)$ implies $D\tal(b)|_E(y_\ast)$ has an eigenvalue with norm not equal to $1$, which contradicts with Lemma \ref{lemma:sub exp grwth on plane}}.
\item If $b\notin \mathcal{L}$, then by Proposition $\ref{prop: properties suspen}$ we know $b$ is regular and hence Anosov. As \cite{KS07}, by \cite{Q} we know the orbit $\tal(\RR^k)\cdot y_\ast$ of its fixed point $y_\ast$ is compact; the Lyapunov exponents and the Lyapunov splitting are defined everywhere on this compact orbit. By Lemma \ref{lemma: hold cont cEi} we know $\tal$ preserves a H\"older continuous conformal structure within $\cE_i/\cE_{i-1}$, hence there is only one Lyapunov exponent on this orbit. We denote this exponent by $\tilde{\chi}$. Since $a\in \mathcal{L}$, by Lemma \ref{lemma:sub exp grwth on plane} we know $\tilde{\chi}(a)=0$, then $$|\tilde\chi(b)|=|\tilde\chi(a)+\tilde\chi(b)|=|\tilde\chi(\delta)|<C_4\|\delta\|<C_5\epsilon$$
But since $y_\ast$ is a fixed point for $\tal(b)$, then $\tilde\chi(b)$ is actually the logarithm of eigenvalues of $\tal(b)$ at $y_\ast$, hence we get 
\begin{equation}\label{eqn: 3.7 KS07}
|\log q(y_\ast,b)|<C_5'\epsilon
\end{equation}
\end{itemize} 
Therefore in both cases $b\in \mathcal{L}$ or $b\notin \mathcal{L}$, the equation \eqref{eqn: 3.7 KS07} holds.

Now we will prove that 
\begin{equation}\label{eqn: 3.8 KS07}
|\log(q(y,b))-\log q(y_\ast,b)|\leq C_6\epsilon^\beta
\end{equation}
In fact by $\tal(b)=\tal(-t_nc)\circ \tal(b)\circ\tal(t_nc)$ we have 
$$q(y,b)=q(\tal(b+t_nc)\cdot y, -t_nc)\cdot q(\tal(t_nc)\cdot y, b)\cdot q(y, t_nc)$$
The middle term tends to $q(y_\ast,b)$ since $\tal(t_nc)\cdot y\to y_\ast$. Moreover we claim that the product of the other terms is close to $1$:
$$\log [q(\tal(b+t_nc)\cdot y, -t_nc)\cdot q(y, t_nc)]=|\log q(\tal(b)\cdot y, t_nc)-\log q(y, t_nc)|<C_7 \epsilon^\beta $$
where the last inequailty follows from the standard estimate for a H\"older contraction coefficient along exponentially close orbits since $\tal(b)\cdot y\in W(y)\subset W^s_c(y)$. Therefore we get \eqref{eqn: 3.8 KS07}, \eqref{eqn: 3.8 KS07} and \eqref{eqn: 3.7 KS07} implies \eqref{eqn: KS07 3.6}. Therefore we get \eqref{eqn: KS07 3.4}, which completes the proof
\end{proof}



\end{document}